
\documentclass[journal,twoside,web]{ieeecolor}


\usepackage{generic}
\usepackage{cite}
\usepackage{amsmath,amssymb,amsfonts}
\usepackage[scr = dutchcal]{mathalpha}
\usepackage{graphicx}
\usepackage{textcomp}
\usepackage{stmaryrd}
\usepackage{tabularx}
\usepackage{multirow}

\usepackage{color}

\newcommand{\f}[2]{\frac{#1}{#2}}

\newcommand{\mbf}[1]{\mathbf{ #1}}
\newcommand{\tbf}[1]{\textbf{#1}}

\newcommand{\mcl}[1]{\mathcal{#1}}
\newcommand{\mscr}[1]{\mathscr{#1}}

\newcommand{\R}{\mathbb{R}}
\newcommand{\N}{\mathbb{N}}
\newcommand{\norm}[1]{\left\lVert{#1}\right\rVert}
\newcommand{\bmat}[1]{\begin{bmatrix}#1\end{bmatrix}}

\newcommand{\smallbmat}[1]{\left[\scriptsize\begin{smallmatrix}
		#1\end{smallmatrix} \right]}
\newcommand{\mat}[1]{\begin{matrix}#1\end{matrix}}

\newcommand{\ip}[2]{\left\langle #1, #2 \right\rangle}

\newcommand{\enn}[1]{\text{n}_{\text{#1}}}

\newtheorem{thm}{Theorem}
\newtheorem{defn}[thm]{Definition}
\newtheorem{lem}[thm]{Lemma}
\newtheorem{prop}[thm]{Proposition}
\newtheorem{cor}[thm]{Corollary}

\newtheorem{example}[thm]{Example}

\let\bl\bigl
\let\bbl\Bigl
\let\bbbl\biggl
\let\bbbbl\Biggl
\let\br\bigr
\let\bbr\Bigr
\let\bbbr\biggr
\let\bbbbr\Biggr

\title{\LARGE \bf
	$L_2$-Gain Analysis of Coupled Linear 2D PDEs using Linear PI Inequalities
}


\author{Declan S. Jagt, Matthew M. Peet %
\thanks{\tbf{Acknowledgement:} This work was supported by National Science Foundation grant CMMI-1935453.} %
}

\begin{document}

	\maketitle
	\thispagestyle{empty}
	\pagestyle{empty}

	\begin{abstract}
		
		In this paper, we present a new method for estimating the $L_2$-gain of systems governed by 2nd order linear Partial Differential Equations (PDEs) in two spatial variables, using semidefinite programming. It has previously been shown that, for any such PDE, an equivalent Partial Integral Equation (PIE) can be derived. These PIEs are expressed in terms of Partial Integral (PI) operators mapping states in $L_2[\Omega]$, and are free of the boundary and continuity constraints appearing in PDEs. In this paper, we extend the 2D PIE representation to include input and output signals in $\mathbb{R}^n$, deriving a bijective map between solutions of the PDE and the PIE, along with the necessary formulae to convert between the two representations. Next, using the algebraic properties of PI operators, we prove that an upper bound on the $L_2$-gain of PIEs can be verified by testing feasibility of a Linear PI Inequality (LPI), defined by a positivity constraint on a PI operator mapping $\mathbb{R}^n\times L_2[\Omega]$. Finally, we use positive matrices to parameterize a cone of positive PI operators on $\mathbb{R}^n\times L_2[\Omega]$, allowing feasibility of the $L_2$-gain LPI to be tested using semidefinite programming. We implement this test in the MATLAB toolbox PIETOOLS, and demonstrate that this approach allows an upper bound on the $L_2$-gain of PDEs to be estimated with little conservatism.
		
	\end{abstract}
	
	
	\section{INTRODUCTION}
	
	Physical systems are often modeled using Partial Differential Equations (PDEs), relating e.g. the temporal evolution of state variables $\mbf{u}$ to their spatial derivatives. For example, for given parameters $D$ and $\lambda$, the 2D PDE defined as
	\begin{align}\label{eq:Population_Dynamics_Intro}
	\dot{\mbf{u}}(t)&=D\bbl[\partial_x^2 \mbf{u}(t)+\partial_y^2 \mbf{u}(t)\bbr]+\lambda \mbf{u}(t)+w(t),			\nonumber\\
	z(t)&=\int_{\Omega}\mbf{u}(t,x,y)dx dy,
	\end{align}
	can be used to model the evolution of a population density $\mbf{u}(t,x,y)$ in some domain $(x,y)\in\Omega$~\cite{holmes1994partial}, where $w(t)$ is some external forcing, $z(t)$ corresponds to the total population size, and $\mbf{u}(t)$ is further constrained by boundary conditions (BCs)
	\begin{align}\label{eq:Population_BCs_Intro}
	\mbf{u}(t,x,y)&\equiv 0,		&	\forall (x,y)&\in\partial\Omega.
	\end{align}
	In analysis and control of systems such as~\eqref{eq:Population_Dynamics_Intro}, a problem that frequently arises is that of bounding the effect of the disturbances $w$ on the output $z$ of the model. 
	For example, we may wish to measure the effect of environmental conditions $w(t)$ on the growth of the population size $z(t)$.
	This effect can be quantified by the $L_2$-gain, defined as the ratio $\gamma:=\frac{\|z\|_{L_2}}{\|w\|_{L_2}}$ of the magnitude of the regulated output $z$ over that of the disturbances $w$. The $L_2$-gain provides a worst-case energy-amplification from input to output signals,  and is often used as a metric for optimilaty in control and estimation, e.g. designing controllers to minimize the effect of disturbances on the system output.

	Unfortunately, the spatial variation of the PDE state complicates efforts to compute the $L_2$-gain of systems governed by PDEs. For comparison, consider estimating the $L_2$-gain of a system governed by an Ordinary Differential Equation (ODE), written in state space representation as
	\begin{align}\label{eq:ODE_intro}
	\dot{u}(t)&=Au(t) + Bw(t),	&	u(0)&=0,	\nonumber\\
	z(t)&=Cu(t) + Dw(t).
	\end{align}
	It can be shown that the $L_2$-gain of a system of this form is bounded by a value $\gamma>0$, if there exists some positive definite \textit{storage function} $V(u)>0$ which satisfies $\dot{V}(u(t))\leq \gamma\|w(t)\|^2-\frac{1}{\gamma}\|z(t)\|^2$ along solutions $u(t)$ of the system. Parameterizing storage functions $V(u)=\ip{u}{Pu}$ using positive matrices $P>0$, this problem can be posed as the Linear Matrix Inequality (LMI) $\smallbmat{-\gamma I & D & C\\ D^T & -\gamma I & B^T P\\ C^T & PB & A^T P + PA}\leq 0$,
	which can be efficiently solved using semidefinite programming (SDP)~\cite{boyd1994linear}.
	
	However, two major issues arise when deriving a similar test for computing the $L_2$-gain of e.g. System~\eqref{eq:Population_Dynamics_Intro}. Firstly, the PDE state $\mbf{u}(t)$ at each time $t\geq 0$ exists in the space $L_2[\Omega]$ of square integrable functions on $\Omega\subseteq\R^2$, raising the question of how to parameterize the set of positive storage functions on this infinite-dimensional space. Secondly, solutions $\mbf{u}(t)$ to the system must satisfy not only the actual PDE~\eqref{eq:Population_Dynamics_Intro}, but also the BCs~\eqref{eq:Population_BCs_Intro} -- raising the challenge of enforcing the condition $\dot{V}(\mbf{u}(t))\leq \gamma\|w(t)\|^2-\frac{1}{\gamma}\|z(t)\|^2$ only along solutions $\mbf{u}(t)$ satisfying both constraints.

	To circumvent these issues associated with parameterizing storage functions for PDEs, a common approach is to approximate the PDE by a finite dimensional system -- an ODE -- using e.g. a basis function expansion~\cite{el2003analysis}. 
	However, properties such as $L_2$-gain bounds estimated for the resulting ODE may not accurately reflect those of the original system -- necessitating a posteriori error bounding methods to obtain provably valid gains. Moreover, a large number of ODE state variables may be required to obtain accurate results,
	growing exponentially with the number of spatial variables in the PDE. As a result, although ODE-based input-output analysis can be efficiently performed for certain 2D systems~\cite{lieu2013L2_gain_Couette_flow,jovanovic2021io_flow_control}, it is computationally intractable for more general 2D PDEs.

	Other methods for testing input-output properties of 2D PDEs without relying on finite-dimensional approximations are generally limited in their application. For example, in~\cite{selivanov2018Hinfty_filtering,selivanov2019Hinfty_control}, LMIs for $H_{\infty}$ filtering and control of diffusive systems are derived, using a storage function of the form $V(\mbf{u})=\|\mbf{u}\|_{L_2}^2+\ip{\nabla\mbf{u} }{P\nabla\mbf{u}}_{L_2}$, parameterized by a positive matrix $P>0$. 
	Similarly, in~\cite{ahmadi2018framework}, polynomial constraints $N(x,y)\leq 0$ are proposed for testing input-output properties of wall-bounded shear flows, also parameterizing a storage function $V(\mbf{u})=\f{1}{2}\ip{\mbf{u}}{Q\mbf{u}}_{L_2}$ by a positive matrix $Q>0$. However, the $L_2$-gain test obtained in each study is valid only for a particular type of PDE with a particular set of BCs.
	Moreover, by parameterizing storage functions merely by matrices, the proposed methods introduce significant conservatism.

	As an alternative to the aforementioned approaches, in this paper, we propose an SDP-based method for computing an upper bound on the $L_2$-gain for a general class of 2nd order, linear, 2D PDEs. Specifically, we focus on PDEs of the form,
	\begin{align}\label{eq:PDE_intro}
	&\begin{array}{lr}
	\dot{\mbf{u}}(t)=\sum_{i,j=0}^{2}A_{i,j}\partial_x^i\partial_y^j \mbf{u}(t)+Bw(t),	&	\hspace*{0.25cm}\mat{\mbf{u}(0)=\mbf{0},\\\mbf{u}(t)\in X,}
	\end{array}	\nonumber\\
	&\begin{array}{lr}
	z(t)=\int_{\Omega}\bbl(\sum_{i,j=0}^{2}C_{i,j}\partial_{x}^{i}\partial_{y}^{j}\mbf{u}(t)\bbr)dxdy + Dw(t),	& 
	\end{array}
	\end{align}
	where $X\subseteq L_2[\Omega]$ is defined by a set of well-posed (non-periodic) BCs. To derive an $L_2$-gain test for systems of this form, we adopt the approach presented in~\cite{shivakumar2019PIE_io_PDE},
	wherein an alternative representation of 1D PDEs as Partial Integral Equations (PIEs) is used. In particular, the authors prove that for any linear, 1D PDE, with sufficiently well-posed BCs $\mbf{u}(t)\in X$, there exists an equivalent PIE representation,
	\begin{align}\label{eq:PIE_intro}
	\mcl{T}\dot{\mbf{v}}(t)&=\mcl{A}\mbf{v}(t) + \mcl{B}w(t),	&	\mbf{v}(0)=\mbf{0},	\nonumber\\
	z(t)&=\mcl{C}\mbf{v}(t) + \mcl{D} w(t),
	\end{align}
	such that a function $\mbf{v}\in L_2[\Omega]$ is a solution to the PIE 	if and only if $\mcl{T}\mbf{v}\in X$ is a solution to the PDE. 
	In this representation, the operators $\{\mcl{T},\mcl{A},\mcl{B},\mcl{C},\mcl{D}\}$ are all Partial Integral (PI) operators: a class of operators that form a *-algebra, with analytic expressions for addition, multiplication, etc.. Quadratic storage functions $V(\mbf{v})=\ip{\mcl{T}\mbf{v}}{\mcl{P}\mcl{T}\mbf{v}}$ can then be parameterized by PI operators $\mcl{P}>0$, offering substantially more freedom than parameterizing by matrices. Moreover, the \textit{fundamental state} $\mbf{v}\in L_2[\Omega]$ in the PIE representation is free of the BCs imposed upon the the PDE state $\mbf{u}\in X$, allowing negativity conditions on the derivative $\dot{V}(\mbf{v}(t))$ to be readily enforced. In this manner, the authors are able to derive a Linear PI Inequality (LPI), $\mcl{Q}(\gamma)=\smallbmat{-\gamma I & \mcl{D} & \mcl{C}\\ \mcl{D}^T & -\gamma I & \mcl{B}^T \mcl{P}\\ \mcl{C}^T & \mcl{P}\mcl{B} & \mcl{A}^T \mcl{P}\mcl{T} + \mcl{T}^T\mcl{P}\mcl{A}}\leq 0$,
	for verifying an upper bound $\gamma$ on the $L_2$-gain of the PIE. Parameterizing a cone of positive PI operators by positive matrices, the authors then pose this LPI as an SDP, allowing problems of $L_2$-gain analysis of 1D PDEs to be efficiently solved~\cite{shivakumar2020PIE_duality,das2020PIE_robust_ODE_PDE,das2019PIE_estimation}.
	
	However, despite a PIE framework having recently been introduced for 2D PDEs~\cite{jagt2021PIEArxiv}, deriving an SDP test for bounding the $L_2$-gain of general systems of the form~\eqref{eq:PDE_intro} still offers several challenges.
	In particular, although a map $\mcl{T}:L_2[\Omega]\rightarrow X$ from the fundamental state space to the PDE domain has been derived for atonomous systems, this map may not be valid when disturbances $w$ are included -- presenting the problem of incorporating these disturbances in the PIE to PDE state conversion.
	In addition, a framework for converting 2D PDEs with inputs and outputs to PIEs is not yet available, still requiring formulae for computing the appropriate operators $\{\mcl{B},\mcl{C},\mcl{D}\}$ to be derived.
	Finally, posing the LPI~$\mcl{Q}(\gamma)\leq 0$ for testing the $L_2$-gain as an SDP requires parameterizing PI operators on a coupled space $\R^{n_1}\times\R^{n_2}\times L_2^{n_3}[\Omega]$, raising the challenge of performing such a parameterization for PI operators in 2D.

	In the remainder of this paper, we carefully detail how we have overcome each of these challenges in deriving and implementing an SDP test for $L_2$-gain analysis of 2D PDEs. In particular, in Section~\ref{sec:LPI_L2_Gain}, we first present an LPI for testing the $L_2$-gain of 2D PIEs, proving that this gain is bounded by $\gamma$ if there exists some positive definite 2D-PI operator $\mcl{P}:L_2^{n_2}\rightarrow L_2^{n_2}$ such that an associated operator $\mcl{Q}(\gamma,\mcl{P}):\R^{n_1}\times L_2^{n_2}\rightarrow \R^{n_1}\times L_2^{n_2}$ is negative semidefinite. In Section~\ref{sec:Representation_of_ioSystems}, we then show that a PIE representation can be derived for any linear, 2nd order 2D PDE, defining operators $\mcl{T}_0:L_2^{n_v}\rightarrow L_2^{n_v}$ and $\mcl{T}_1:\R^{n_w}\rightarrow L_2^{n_v}$ such that for a disturbance $w\in\R^{n_w}$, a function $\mbf{v}\in L_2^{n_v}$ solves the PIE if and only if $\mcl{T}_0\mbf{v}+\mcl{T}_1 w$ solves the PDE. Finally, in Section~\ref{sec:Positive_PI_Params}, we parameterize a cone of positive PI operators $\Pi:\R^{n_1}\times L_2^{n_2}\rightarrow \R^{n_1}\times L_2^{n_2}$ by positive matrices, allowing feasibility of the $L_2$-gain LPI to be posed as an SDP. This result is formulated in Section~\ref{sec:L2_gain_PDE}, and numerical tests are presented in Section~\ref{sec:Numerical_Examples}.

	\section{Preliminaries}

	\subsection{Notation}
	
	For a given domain $\Omega\subset\R^d$, let $L_2^n[\Omega]$ denote the set of $\R^n$-valued square-integrable functions on $\Omega$, where we omit the domain when clear from context. Define intervals $\Omega_{a}^{b}:=[a,b]$ and $\Omega_{c}^{d}:=[c,d]$ for spatial variables $x,y$, and let $\Omega_{ac}^{bd}:=\Omega_{a}^{b}\times\Omega_{c}^{d}$ be the corresponding 2D domain. 
	For $\text{n}\!=\!\{n_0,n_1\}\!\in\N^2$, define $\text{Z}_{1}^{\text{n}}[\Omega_{ac}^{bd}]\!:=\R^{n_0}\!\times\! L_2^{n_1}[\Omega_{a}^{b}]\!\times\! L_2^{n_1}[\Omega_{c}^{d}]$,\\ and for $\text{n}\!=\!\{n_0,n_1,n_2\}\!\in\!\N^3$, define $\text{Z}_{2}^{\text{n}}[\Omega_{ac}^{bd}]:=\R^{n_0}\times L_2^{n_1}[\Omega_{a}^{b}]\!\times L_2^{n_1}[\Omega_{c}^{d}]\times L_2^{n_2}[\Omega_{ac}^{bd}]$, where we also omit the domain when clear from context. For given $\enn{}\!\in\!\N^3$ and any\\ $\mbf{u}\!=\!\smallbmat{u_0\\\mbf{u}_x\\\mbf{u}_y\\\mbf{u}_{2}}\in\text{Z}_{2}^{\enn{}}$ and $\mbf{v}=\smallbmat{v_0\\\mbf{v}_x\\\mbf{v}_y\\\mbf{v}_{2}}\!\in\!\text{Z}_{2}^{\enn{}}$, define the inner product
	\begin{align*}
		\ip{\mbf{u}}{\mbf{v}}_{\text{Z}_{2}^{\enn{}}}\! =\! \ip{u_0}{v_0}\! +\! \ip{\mbf{u}_x}{\mbf{v}_x}_{L_2}\! +\! \ip{\mbf{u}_y}{\mbf{v}_y}_{L_2}\! +\! \ip{\mbf{u}_2}{\mbf{v}_2}_{L_2},
	\end{align*}  	
	where $\ip{.}{.}$ denotes the Euclidean inner product, and $\ip{.}{.}_{L_2}$ the standard inner product on $L_2$. For any $\alpha\in\N^{2}$, we denote $\|\alpha\|_{\infty}:=\max\{\alpha_1,\alpha_2\}$.
	Then, we define $W_{k}^n[\Omega_{ac}^{bd}]$ as a Sobolev subspace of $L_2^n[\Omega_{ac}^{bd}]$, where
	\begin{align*}
		W_{k}^n[\Omega_{ac}^{bd}]\!=\!\bl\{\mbf{v}\mid \partial_x^{\alpha_1}\partial_y^{\alpha_2}\mbf{v}\!\in\! L_2^n[\Omega_{ac}^{bd}],\ \forall \alpha_j\!\in\!\N:\! \|\alpha\|_{\infty}\!\leq k\br\}.
	\end{align*}
	As for $L_2$, we occasionally omit the domain when clear from context. For $\mbf{v}\in W_{k}^n[\Omega_{ac}^{bd}]$, we use the norm
	\begin{align*}
		\norm{\mbf{v}}_{W_k}=\sum_{\|\alpha\|_{\infty}\leq k}\norm{\partial_x^{\alpha_1}\partial_y^{\alpha_2}\mbf{v}}_{L_2}
	\end{align*}
	For $\mbf{v}\in W_k^n[\Omega_{ac}^{bd}]$, we denote the Dirac delta operators
	\[
	[\Delta_{x}^{a} \mbf{v}](y):=\mbf{v}(a,y)\quad \text{and}\quad [\Delta_{y}^{c} \mbf{v}](x):=\mbf{v}(x,c).
	\]
	For a function $N\in L_2^{n\times m}[\Omega_{ac}^{bd}]$, and any $\mbf{v}\in L_2^{m}[\Omega_{ac}^{bd}]$,  
	we define the multiplier operator $\text{M}$ and integral operator $\smallint$ as
	\begin{align*}
		(\text{M}[N]\mbf{v})(x,y)&:=N(x,y)\mbf{v}(x,y), \\
		\left(\smallint_{\Omega_{ac}^{bd}}[N]\mbf{v}\right)&\ :=\int_{a}^{b}\int_{c}^{d}N(x,y)\mbf{v}(x,y)dy dx.
	\end{align*}

	\subsection{Algebras of PI Operators on 2D}\label{sec:subsec:2D_PI}

	Partial integral (PI) operators are bounded, linear operators, parameterized by square integrable functions. In 2D, we distinguish PI operators defined by parameters in the spaces $\mcl{N}_{011}$, $\mcl{N}_{2D}$ and $\mcl{N}_{0112}$, mapping different function spaces as presented in Table~\ref{tab:PI_Nspaces}. We outline the definition of the associated PI operators in this subsection, referring to~\cite{jagt2021PIEArxiv} for more details.
	\smallskip

	\begin{defn}[011-PI Operators, $\Pi_{011}$]
		For any\\
		$\text{m}:=\{m_0,m_1\}\in\N^2$ and $\text{n}:=\{n_0,n_1\}\in\N^2$, let
		{\small
			\begin{align*}
				\mcl{N}_{011}^{\text{n}\times\text{m}}[\Omega_{ac}^{bd}]:=\bmat{\R^{n_0\times m_0}   &L_2^{n_0\times m_1}[\Omega_{a}^{b}] &L_2^{n_0\times m_1}[\Omega_{c}^{d}]\\
					L_2^{n_1\times m_0}[\Omega_{a}^{b}] &\mcl{N}_{1D}^{n_1\times m_1}[\Omega_{a}^{b}]   &L_2^{n_1\times m_1}[\Omega_{ac}^{bd}]\\
					L_2^{n_1\times m_0}[\Omega_{c}^{d}] &L_2^{n_1\times m_1}[\Omega_{ac}^{bd}]   &\mcl{N}_{1D}^{n_1\times m_1}[\Omega_{c}^{d}]}, \\[-1.8em]
			\end{align*}
		}
		where
		{\small
			\begin{align*}
				\mcl{N}_{1D}^{n\times m}[\Omega_{a}^{b}]=L_2^{n\times m}[\Omega_{a}^{b}] \times L_2^{n\times m}[\Omega_{a}^{b}\!\times\!\Omega_{a}^{b}] \times L_2^{n\times m}[\Omega_{a}^{b}\!\times\!\Omega_{a}^{b}].
			\end{align*}
		}
		Then, for given parameters $B:=\smallbmat{B_{00}&B_{01}&B_{02}\\B_{10}&B_{11}&B_{12}\\B_{20}&B_{21}&B_{22}}\in \mcl{N}_{011}^{\text{n}\times\text{m}}$, we define the associated 011-PI operator $\mcl{P}[B]:\text{Z}_{1}^{\text{m}}\rightarrow \text{Z}_{1}^{\text{n}}$ as \vspace*{-0.2cm}
		{\small
			\begin{align*}
				\mcl P[B]:=\bmat{
					B_{00} &\smallint_{\Omega_{a}^{b}} [B_{01}]  &\smallint_{\Omega_{c}^{d}} [B_{02}]  \\
					\text{M}[B_{10}] &\mcl{P}[B_{11}]        &\smallint_{\Omega_{c}^{d}}[B_{12}]\\
					\text{M}[B_{20}] &\smallint_{\Omega_{a}^{b}}[B_{21}]   &\mcl{P}[B_{22}]}.
			\end{align*}
		}
		where for $N:=\{N_0,N_1,N_2\}\in\mcl{N}_{1D}^{n\times m}[\Omega_{a}^{b}]$ and any $\mbf{v}\in L_2^{m}[\Omega_{a}^{b}]$, we define \vspace*{-0.2cm}
		{\small
			\begin{align*}
				\bl(\mcl{P}[N]\mbf{v}\br)(x)&= N_0(x)\mbf{v}(x) + \int_{a}^{x}\! N_1(x,\theta)\mbf{v}(\theta)d\theta 	\\[-0.2em]
				&\qquad+ \int_{x}^{b}\! N_2(x,\theta)\mbf{v}(\theta)d\theta.\\[-1.8em]
			\end{align*}
		}	
		We denote the set of 011-PI operators as $\Pi_{011}^{\text{n}\times\text{m}}$, so that $\mcl{P}\in \Pi_{011}^{\text{n}\times\text{m}}$ if and only if $\mcl{P}=\mcl{P}[B]$ for some $B\in\mcl{N}_{011}^{\text{n}\times\text{m}}$.
	\end{defn}
	
	\medskip
	\begin{defn}[2D-PI Operators, $\Pi_{2D}$]
		For any $m,n\in\N$, let \vspace*{-0.2cm}
		{\small
			\begin{align*}
				&\quad\mcl{N}_{2D}^{n\times m}[\Omega_{ac}^{bd}]\ :=	\\
				&\quad \!\!
				\left[\!\!\begin{array}{lll}
					L_{2}^{n \times m}[\Omega_{ac}^{bd}]           
					\!&\! L_{2}^{n \times m}[\Omega_{ac}^{bd}\!\times\!\Omega_{c}^{d}]    
					\!&\! L_{2}^{n \times m}[\Omega_{ac}^{bd}\!\times\!\Omega_{c}^{d}]\\
					L_{2}^{n \times m}[\Omega_{ac}^{bd}\!\times\!\Omega_{a}^{b}]    
					\!&\! L_2^{n\times m}[\Omega_{ac}^{bd}\!\times\!\Omega_{ac}^{bd}]
					\!&\! L_2^{n\times m}[\Omega_{ac}^{bd}\!\times\!\Omega_{ac}^{bd}]\\
					L_{2}^{n \times m}[\Omega_{ac}^{bd}\!\times\!\Omega_{a}^{b}]    
					\!&\! L_2^{n\times m}[\Omega_{ac}^{bd}\!\times\!\Omega_{ac}^{bd}]
					\!&\! L_2^{n\times m}[\Omega_{ac}^{bd}\!\times\!\Omega_{ac}^{bd}]
				\end{array}\!\right].
			\end{align*}
		}
		Then, for given parameters $N:=\smallbmat{N_{00}&N_{01}&N_{02}\\N_{10}&N_{11}&N_{12}\\N_{20}&N_{21}&N_{22}}\in\mcl{N}_{2D}^{n\times m}$, we define the associated 2D-PI operator $\mcl{P}[N]:L_2^{m}[\Omega_{ac}^{bd}]\rightarrow L_2^{n}[\Omega_{ac}^{bd}]$ such that, for any $\mbf{v}\in L_2^{m}[\Omega_{ac}^{bd}]$,
		{\small
			\begin{align*}
				&(\mcl P[N]\mbf{v})(x,y):=N_{00}(x,y)\mbf{v}(x,y)\\
				& +\int_{a}^{x}N_{10}(x,y,\theta)\mbf{v}(\theta,y)d\theta +\int_{x}^{b}N_{20}(x,y,\theta)\mbf{v}(\theta,y)d\theta \nonumber\\ &\quad+\int_{c}^{y}N_{01}(x,y,\nu)\mbf{v}(x,\nu)d\nu +\int_{y}^{d}N_{02}(x,y,\nu)\mbf{v}(x,\nu)d\nu \nonumber\\
				&\qquad+\int_{a}^{x}\int_{c}^{y}N_{11}(x,y,\theta,\nu)\mbf{v}(\theta,\nu)d\nu d\theta \nonumber\\
				&\qquad\qquad+\int_{x}^{b}\int_{c}^{y}N_{21}(x,y,\theta,\nu)\mbf{v}(\theta,\nu)d\nu d\theta \nonumber\\
				&\qquad\qquad\qquad+\int_{a}^{x}\int_{y}^{d}N_{12}(x,y,\theta,\nu)\mbf{v}(\theta,\nu)d\nu d\theta \nonumber\\
				&\qquad\qquad\qquad\qquad+\int_{x}^{b}\int_{y}^{d}N_{22}(x,y,\theta,\nu)\mbf{v}(\theta,\nu)d\nu d\theta.	\nonumber
			\end{align*}
		}
		We denote the set of 2D-PI operators as $\Pi_{2D}^{n\times m}$, so that $\mcl{P}\in \Pi_{2D}^{n\times m}$ if and only if $\mcl{P}=\mcl{P}[N]$ for some $N\in\mcl{N}_{2D}^{n\times m}$.
	\end{defn}

	\begin{table}[!t]
		\renewcommand{\arraystretch}{1.45}
		\begin{tabular}{p{1.5cm}p{1.5cm}||p{1.00cm} p{0.25cm} p{2.0cm}}
			\multicolumn{2}{l||}{PI operator parameter}
			& \multicolumn{3}{l}{Function spaces corresponding to}   \\[-0.3em]
			\multicolumn{2}{l||}{space $\mcl{N}$}
			& \multicolumn{3}{l}{PI operator $\mcl{P}[N]$, for $N\in\mcl{N}$}
			\\\hline\hline
			$\mcl{N}_{2D}^{n\times m}$ 
			&\hspace*{-0.5cm}  $n,m\in\N$
			&\hspace*{-0.0cm}
			$L_2^m[\Omega_{ac}^{bd}]$ & $\rightarrow$ & $L_2^n[\Omega_{ac}^{bd}]$
			\\
			$\mcl{N}_{011}^{\text{n}\times\text{m}}$ 
			&\hspace*{-0.5cm} $\text{n},\text{m}\in\N^{2}$
			&\hspace*{-0.0cm}
			$\text{Z}_{1}^{\text{m}}[\Omega_{ac}^{bd}]$ & $\rightarrow$ & $\text{Z}_{1}^{\text{n}}[\Omega_{ac}^{bd}]$
			\\
			$\mcl{N}_{2D\leftarrow 011}^{n\times\text{m}}$ 
			&\hspace*{-0.5cm} $n\in\N,\ \text{m}\in\N^{2}$
			&\hspace*{-0.0cm}
			$\text{Z}_{1}^{\text{m}}[\Omega_{ac}^{bd}]$ & $\rightarrow$ & $L_2^n[\Omega_{ac}^{bd}]$
			\\
			$\mcl{N}_{011\leftarrow 2D}^{\text{n}\times m}$ 
			&\hspace*{-0.5cm} $\text{n}\in\N^2,\ m\in\N$
			&\hspace*{-0.0cm}
			$L_2^m[\Omega_{ac}^{bd}]$ & $\rightarrow$ & $\text{Z}_{1}^{\text{n}}[\Omega_{ac}^{bd}]$
			\\
			$\mcl{N}_{0112}^{\text{n}\times\text{m}}$ 
			&\hspace*{-0.5cm} $\text{n},\text{m}\in\N^{3}$
			&\hspace*{-0.cm}
			$\text{Z}_{2}^{\text{m}}[\Omega_{ac}^{bd}]$ & $\rightarrow$ & $\text{Z}_{2}^{\text{n}}[\Omega_{ac}^{bd}]$
		\end{tabular}
		\caption{\vspace*{-0.1cm} Function spaces associated to PI operator parameter spaces introduced in Subsection~\ref{sec:subsec:2D_PI}
		}
		\label{tab:PI_Nspaces}
	\end{table}

	\begin{defn}[0112-PI Operators, $\Pi_{0112}$]
		For any \\
		$\text{m}:=\{m_0,m_1,m_2\}\in\N^3$ and $\text{n}:=\{n_0,n_1,n_2\}\in\N^3$, let
		\vspace*{-0.05cm}
		{
			\begin{align*}
				\mcl{N}_{0112}^{\text{n}\times\text{m}}&:=
				\left[\!\begin{array}{ll}
					\mcl{N}_{011}^{\tilde{\text{n}}\times\tilde{\text{m}}}[\Omega_{ac}^{bd}] 
					&
					\mcl{N}_{011\leftarrow 2D}^{\tilde{\text{n}}\times m_2}[\Omega_{ac}^{bd}]
					\\[0.25em]
					\mcl{N}_{2D\leftarrow 011}^{n_2\times\tilde{\text{m}}}[\Omega_{ac}^{bd}]
					&\mcl{N}_{2D}^{n_2\times n_2}[\Omega_{ac}^{bd}]
				\end{array}\!\right],
			\end{align*}
		}
		where $\tilde{\text{n}}:=\{n_0,n_1\}$, $\tilde{\text{m}}:=\{m_0,m_1\}$, and
		{\small
			\begin{align*}
				\mcl{N}_{2D\leftarrow 011}^{n_2\times\tilde{\text{m}}}&\!:=\!\!
				\left[\!\!\begin{array}{lll}
					L_2^{n_2\times m_0}[\Omega_{ac}^{bd}] \\
					\mcl{N}^{n_2\times m_1}_{1D}[\Omega_{ac}^{bd}] \\
					\mcl{N}^{n_2\times m_1}_{1D}[\Omega_{ca}^{db}]
				\end{array}\!\!\right],	
				&
				\mcl{N}_{011\leftarrow 2D}^{\tilde{\text{n}}\times m_2}&\!:=\!\!
				\left[\!\!\begin{array}{l}
					L_2^{n_0\times m_2}[\Omega_{ac}^{bd}]\\ 
					\mcl{N}^{n_1\times m_2}_{1D}[\Omega_{ac}^{bd}]\\
					\mcl{N}^{n_1\times m_2}_{1D}[\Omega_{ca}^{db}]
				\end{array}\!\!\right] \\[-1.7em]
			\end{align*}
		}
		with
		{\small
			\begin{align*}
				\mcl{N}_{1D}^{n\times m}[\Omega_{ac}^{bd}]\!:=L_2^{n\times m}[\Omega_{ac}^{bd}] \!\times \!
				L_2^{n\times m}[\Omega_{ac}^{bd}\!\times\!\Omega_{a}^{b}] \!\times\!
				L_2^{n\times m}[\Omega_{ac}^{bd}\!\times\!\Omega_{a}^{b}].
			\end{align*}
		}
		Then, for given parameters $G={\scriptsize\bmat{B&C_1\\C_2&N}}\in
		\mcl{N}_{0112}^{\text{n}\times\text{m}}$,
		where
		\begin{align*}
			C_2&:=\smallbmat{C_{30}\\ C_{31}\\ C_{32}}\in\mcl{N}_{2D\leftarrow 011}^{n_2\times\tilde{\text{m}}},	&
			&\text{and,} 	&
			C_1&:=\smallbmat{C_{03}\\C_{13}\\C_{23}}\in\mcl{N}_{011\leftarrow 2D}^{\tilde{\text{n}}\times m_2} 
		\end{align*}
		we define the associated 0112-PI operator $\mcl{P}[G]:\text{Z}_{2}^{\text{m}}\rightarrow \text{Z}_{2}^{\text{n}}$ as \vspace*{-0.20cm}
		{\small
			\begin{align*}
				\mcl{P}[G]=
				\left[\!\!\!\!\begin{array}{cc}
					\mcl{P}[B] &
					\mcl{P}[C_1]		\\
					\mcl{P}[C_2]	&	
					\mcl{P}[N]
				\end{array}\!\!\!\right],
			\end{align*}
		}
		where for $E=\smallbmat{E_0\\E_1\\E_2}\in\mcl{N}_{2D\leftarrow 011}^{n\times\text{m}}$ and $D=\smallbmat{D_0\\D_1\\D_2}\in\mcl{N}_{011\leftarrow 2D}^{\text{n}\times m}$ with $\text{m},\text{n}\in\N^2$ we define \\[-1.6em]
		{\small
			\begin{align*}
				\mcl{P}[E]&\!=\!\left[\!\!\begin{array}{lll}
					\text{M}[E_{0}]  \!\!&\! \mcl{P}[E_{1}] \!&\!\! \mcl{P}[E_{2}]
				\end{array}\!\!\right],	&
				\mcl{P}[D]&\!=\!\left[\!\!
				\begin{array}{r}
					\smallint_{\Omega_{ac}^{bd}}[D_{0}]\\
					\smallint_{\Omega_{c}^{d}}[I] \circ \mcl{P}[D_{1}] \\
					\smallint_{\Omega_{a}^{b}}[I] \circ \mcl{P}[D_{2}]
				\end{array}\!\!	\right]\!,
			\end{align*}
		}
		where for $R:=\{R_0,R_1,R_2\}\in\mcl{N}^{n\times m}_{1D}[\Omega_{ac}^{bd}]$, we define
		{\small
			\begin{align*}
				&(\mcl{P}[R]\mbf{v})(x,y):=R_{0}(x,y)\mbf{v}(x,y) +\int_{a}^{x}R_{1}(x,y,\theta)\mbf{v}(\theta,y)d\theta\\[-0.2em]
				&\hspace*{2.75cm}
				+\int_{x}^{b}R_{2}(x,y,\theta)\mbf{v}(\theta,y)d\theta, \\[-2.0em]
			\end{align*}	
		}
		for arbitrary $\mbf{v}\in L_2^{m}[\Omega_{ac}^{bd}]$. We denote the set of 0112-PI operators as $\Pi_{0112}^{\text{n}\times \text{m}}$, so that $\mcl{P}\in \Pi_{0112}^{\text{n}\times \text{m}}$ if and only if $\mcl{P}=\mcl{P}[G]$ for some $G\in\mcl{N}_{0112}^{\text{n}\times\text{m}}$.
	\end{defn}

	\subsection{Properties of PI Operators}\label{sec:subsec:PI_properties}
	
	In~\cite{jagt2021PIEArxiv}, it was shown that the set of 0112-PI operators $\Pi_{0112}^{\text{n}\times\text{m}}$ forms a *-algebra, with several useful properties. 
	We summarize a few of these properties below, referring to~\cite{jagt2021PIEArxiv} for more details and a proof of each result.

	\subsubsection{The sum of 0112-PI operators is a 0112-PI operator}\label{enum:PIprop_sum}
	
	\begin{prop}
		For any $\mcl{Q},\mcl{R}\in\Pi_{0112}^{\text{n}\times\text{m}}$ with $\text{n},\text{m}\in\N^3$, there exists a unique $\mcl{P}\in\Pi_{0112}^{\text{n}\times\text{m}}$ such that $\mcl{P}=\mcl{Q}+\mcl{R}$. 
	\end{prop}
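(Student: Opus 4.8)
The plan is to exploit two facts: that the parameter space $\mcl{N}_{0112}^{\text{n}\times\text{m}}$ is a vector space, and that the parameter-to-operator map $G\mapsto\mcl{P}[G]$ is linear. First I would invoke the definition of $\Pi_{0112}^{\text{n}\times\text{m}}$ to write $\mcl{Q}=\mcl{P}[G_Q]$ and $\mcl{R}=\mcl{P}[G_R]$ for some parameters $G_Q,G_R\in\mcl{N}_{0112}^{\text{n}\times\text{m}}$. Since every block of $\mcl{N}_{0112}^{\text{n}\times\text{m}}$ is a finite product of Euclidean matrix spaces $\R^{\cdot\times\cdot}$ and function spaces $L_2^{\cdot\times\cdot}$ — each of which is closed under entrywise addition — the space $\mcl{N}_{0112}^{\text{n}\times\text{m}}$ is itself a vector space, so the componentwise sum $G_Q+G_R$ again lies in $\mcl{N}_{0112}^{\text{n}\times\text{m}}$.

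The heart of the argument is to verify the identity $\mcl{P}[G_Q+G_R]=\mcl{P}[G_Q]+\mcl{P}[G_R]$. This reduces to checking linearity of each elementary operation appearing in the definition of a 0112-PI operator. Indeed, the multiplier $\text{M}[N]\mbf{v}=N\mbf{v}$, the integral $\smallint_{\Omega_{ac}^{bd}}[N]\mbf{v}=\int\!\int N\mbf{v}$, and each kernel term such as $\int_{a}^{x}N_1(x,y,\theta)\mbf{v}(\theta,y)\,d\theta$ together with its counterparts are all \emph{linear} in the defining parameter (since both pointwise multiplication by $\mbf{v}$ and integration against $\mbf{v}$ are linear in the kernel). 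Propagating this linearity through the block decomposition $\mcl{P}[G]=\smallbmat{\mcl{P}[B]&\mcl{P}[C_1]\\\mcl{P}[C_2]&\mcl{P}[N]}$ — and, recursively, through the 011-PI, $\mcl{N}_{011\leftarrow 2D}$, $\mcl{N}_{2D\leftarrow 011}$, and 2D-PI sub-blocks, each of which likewise decomposes into such multiplier and integral terms — yields $\mcl{P}[G_Q+G_R]\mbf{v}=\mcl{P}[G_Q]\mbf{v}+\mcl{P}[G_R]\mbf{v}$ for every $\mbf{v}\in\text{Z}_{2}^{\text{m}}$.

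Setting $\mcl{P}:=\mcl{P}[G_Q+G_R]$ then produces a 0112-PI operator in $\Pi_{0112}^{\text{n}\times\text{m}}$ satisfying $\mcl{P}=\mcl{Q}+\mcl{R}$, which establishes existence. Uniqueness is immediate: the sum $\mcl{Q}+\mcl{R}$ is a single, well-defined linear operator on $\text{Z}_{2}^{\text{m}}$, so any two members of $\Pi_{0112}^{\text{n}\times\text{m}}$ equal to it must coincide as operators. Note that this says nothing about uniqueness of the parameter $G$, which is not claimed and is in general false.

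I expect the only real obstacle to be the book-keeping in verifying the linearity identity: the 0112-PI definition is assembled by nesting several layers of block operators (the $\mcl{N}_{011}$ block, the two cross-spaces, and the $\mcl{N}_{2D}$ block), and one must confirm that the linear structure is preserved at each level. Each individual check is routine — it is just linearity of integration and multiplication — so the difficulty is organizational rather than mathematical. It could be streamlined by first observing abstractly that $\mcl{P}[\cdot]$ is a linear map on $\mcl{N}_{0112}^{\text{n}\times\text{m}}$, and then simply reading off that the sum of the block parameters parameterizes the sum of the corresponding block operators.
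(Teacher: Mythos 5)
Your proposal is correct and takes essentially the same route as the paper: the paper states this result without proof (deferring to~\cite{jagt2021PIEArxiv}) and immediately introduces the parameter map $\mcl{L}_{+}$, which is exactly your construction --- the sum is parameterized by the componentwise sum of parameters, with the identity $\mcl{P}[G_Q+G_R]=\mcl{P}[G_Q]+\mcl{P}[G_R]$ following from linearity of the parameterization and operator-level uniqueness being trivial. No gaps; your organizational concern about nesting the block structure is the only real work, just as you anticipated.
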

	We denote the associated parameter map as $\mcl{L}_{+}:\mcl{N}^{\text{n}\times\text{m}}_{0112}\times \mcl{N}^{\text{n}\times\text{m}}_{0112}\rightarrow \mcl{N}^{\text{n}\times\text{m}}_{0112}$, so that, for any $Q,R\in\mcl{N}^{\text{n}\times\text{m}}_{0112}$,
	\begin{align*}
		\mcl{P}[P]&=\mcl{P}[Q]+\mcl{P}[R],	&
		&\text{if and only if }\	P=\mcl{L}_{+}(Q,R).
	\end{align*}
	
	\subsubsection{The product of 0112-PI operators is a 0112-PI \\ operator}\label{enum:PIprop_prod}
	
	\begin{prop}
		For any $\mcl{Q}\in\Pi_{0112}^{\text{n}\times\text{p}}$ and $\mcl{R}\in\Pi_{0112}^{\text{p}\times\text{m}}$ with $\text{n},\text{p},\text{m}\in\N^3$, there exists a unique $\mcl{P}\in\Pi_{0112}^{\text{n}\times\text{m}}$ such that $\mcl{P}=\mcl{Q}\mcl{R}$. 
	\end{prop}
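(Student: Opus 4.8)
The plan is to exploit the block structure of $\Pi_{0112}$ operators and reduce the claim to a finite collection of composition identities for the constituent operator classes, followed by the closure-under-addition result already established. Writing $\mcl{Q}=\mcl{P}[G_Q]$ and $\mcl{R}=\mcl{P}[G_R]$ with $G_Q=\smallbmat{B_Q & C_{1,Q}\\ C_{2,Q} & N_Q}$ and $G_R=\smallbmat{B_R & C_{1,R}\\ C_{2,R} & N_R}$, the definition of $\mcl{P}[G]$ presents each as a $2\times 2$ operator-valued block matrix acting on $\text{Z}_2^{\text{p}}=\text{Z}_1^{\tilde{\text{p}}}\times L_2^{p_2}$. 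Carrying out the block product $\mcl{Q}\mcl{R}$, the statement reduces to showing that each of the four resulting blocks lies in the appropriate class: the $(1,1)$ block $\mcl{P}[B_Q]\mcl{P}[B_R]+\mcl{P}[C_{1,Q}]\mcl{P}[C_{2,R}]$ in $\Pi_{011}^{\tilde{\text{n}}\times\tilde{\text{m}}}$, the $(2,2)$ block $\mcl{P}[C_{2,Q}]\mcl{P}[C_{1,R}]+\mcl{P}[N_Q]\mcl{P}[N_R]$ in $\Pi_{2D}^{n_2\times m_2}$, and the two off-diagonal blocks in the mixed classes with parameters in $\mcl{N}_{011\leftarrow 2D}^{\tilde{\text{n}}\times m_2}$ and $\mcl{N}_{2D\leftarrow 011}^{n_2\times\tilde{\text{m}}}$. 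Since closure under addition has already been shown, it suffices to treat each of the eight constituent products separately.

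The core of the argument is a composition formula for each product of atomic PI components—multipliers $\text{M}[\cdot]$, partial integrals $\smallint[\cdot]$, the triangular integral operators $\mcl{P}[N]$ in a single spatial variable, and the full 2D operators $\mcl{P}[N]$. For each such product I would substitute the integral definitions, yielding iterated integrals against the composed kernels, and then apply Fubini's theorem—justified by square-integrability of all kernels and boundedness of the operators—to interchange the order of integration. The new integration domains must then be re-expressed in standard triangular form: a nested region such as $\{a\le s\le\theta\le x\}$ becomes $\{a\le s\le x,\ s\le\theta\le x\}$ after swapping, while a region such as $\{a\le\theta\le x,\ \theta\le s\le b\}$ splits at $s=x$ into a lower-triangular contribution on $s\in[a,x]$ and an upper-triangular contribution on $s\in[x,b]$. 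Collecting the inner integrals over $\theta$ into new kernels produces, for each product, an operator of the claimed PI class with explicitly computable parameters. This is precisely the mechanism used for the lower-dimensional components in~\cite{jagt2021PIEArxiv}; the 2D kernels require performing this splitting independently in the $x$ and $y$ variables, so the nine components of each 2D kernel generate a larger but entirely systematic family of terms, each of which is attributed to one of the nine components $N_{00},\dots,N_{22}$ of a single $\Pi_{2D}$ parameter.

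Having assembled all eight products into their respective classes, I would invoke closure under addition to combine the two products in each block into a single PI parameter, and then collect the four blocks into one parameter $P\in\mcl{N}_{0112}^{\text{n}\times\text{m}}$ with $\mcl{P}[P]=\mcl{Q}\mcl{R}$, establishing existence. Uniqueness of $\mcl{P}$ is equivalent to injectivity of the parameter map $G\mapsto\mcl{P}[G]$: since $\mcl{Q}\mcl{R}$ is a fixed bounded operator, any two parameters representing it must coincide. This injectivity is inherited from the corresponding property of the constituent classes in~\cite{jagt2021PIEArxiv}, where distinct square-integrable kernels are shown to induce distinct operators by testing against suitable families of functions to recover each kernel almost everywhere.

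The principal obstacle is the 2D composition $\mcl{P}[N_Q]\mcl{P}[N_R]$, where the bookkeeping of domain splittings in both spatial variables is most intricate. Each inner triangular integral in $\mcl{P}[N_R]$, composed with an outer triangular integral in $\mcl{P}[N_Q]$, produces several terms upon reordering and splitting, and one must verify that every resulting term is correctly assigned to one of the nine kernel components—tracking which spatial variable each integral acts in and confirming that each Fubini interchange is legitimate. Once this central computation is organized, the mixed blocks and the 011 block follow by the same technique applied to fewer nested integral layers.
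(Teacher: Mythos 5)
Your proposal is correct and takes essentially the same approach as the paper's proof, which the paper defers to~\cite{jagt2021PIEArxiv}: the product is expanded blockwise into the eight constituent compositions, each is reduced to explicit kernel formulas via Fubini interchange and triangular-domain splitting, and closure under addition assembles the result into a single parameter, yielding the map $\mcl{L}_{\times}(Q,R)$. One minor remark: uniqueness of $\mcl{P}$ is immediate because it must equal the fixed operator $\mcl{Q}\mcl{R}$, so your detour through injectivity of the parameter map $G\mapsto\mcl{P}[G]$ (which concerns uniqueness of parameters, not of the operator, and is therefore not equivalent to the stated claim) is unnecessary, though harmless.
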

	We denote the associated parameter map as $\mcl{L}_{\times}:\mcl{N}^{\text{n}\times\text{p}}_{0112}\times \mcl{N}^{\text{p}\times\text{m}}_{0112}\rightarrow \mcl{N}^{\text{n}\times\text{m}}_{0112}$, so that, for any $Q\in\mcl{N}^{\text{n}\times\text{p}}_{0112}$ and $R\in\mcl{N}^{\text{p}\times\text{m}}_{0112}$,
	\begin{align*}
		\mcl{P}[P]&=\mcl{P}[Q]\mcl{P}[R],	&
		&\text{if and only if }\	P=\mcl{L}_{\times}(Q,R).
	\end{align*}
	
	\subsubsection{The inverse of a suitable 011-PI operator is a 011-PI operator}\label{enum:PIprop_inv}
	
	\begin{prop}
		For any $\mcl{R}\in\Pi_{011}^{\text{n}\times\text{n}}$ with $\text{n}\in\N^2$, satisfying the conditions of Lemma~5 in~\cite{jagt2021PIEArxiv}, there exists a unique $\hat{\mcl{R}}\in\Pi_{011}^{\text{n}\times\text{n}}$ such that $\hat{\mcl{R}}\mcl{R}=\mcl{R}\hat{\mcl{R}}=I$.
	\end{prop}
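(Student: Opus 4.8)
The plan is to reduce inversion of the full 011-PI operator to three elementary inversions---of the two ``diagonal'' 1D-PI blocks acting on $L_2^{n_1}[\Omega_{a}^{b}]$ and $L_2^{n_1}[\Omega_{c}^{d}]$, and of a single finite-dimensional Schur complement on $\R^{n_0}$---while using the closure of $\Pi_{011}$ under sums and products (the two preceding Propositions) to guarantee that every intermediate operator remains a 011-PI operator. Writing $\mcl{R}=\mcl{P}[B]$ with $B=\smallbmat{B_{00}&B_{01}&B_{02}\\B_{10}&B_{11}&B_{12}\\B_{20}&B_{21}&B_{22}}$, I would regard $\mcl{R}$ as a $3\times 3$ block operator on $\R^{n_0}\times L_2^{n_1}[\Omega_{a}^{b}]\times L_2^{n_1}[\Omega_{c}^{d}]$ and compute its inverse by block Gaussian elimination (an LDU factorization), so that $\hat{\mcl{R}}$ is assembled from the inverses of the individual factors.

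First I would eliminate the $L_2^{n_1}[\Omega_{c}^{d}]$-block by inverting the diagonal 1D-PI operator $\mcl{P}[B_{22}]$. Here the hypotheses of Lemma~5 enter: they guarantee that the multiplier part of $\mcl{P}[B_{22}]$ (and, below, of $\mcl{P}[B_{11}]$) is a.e.\ invertible, so the known inversion result for 1D-PI operators applies and yields an inverse again of 1D-PI form. The key structural observation making the elimination stay inside $\Pi_{011}$ is that the off-diagonal couplings $B_{12},B_{21}$, although parameterized by genuine $2$D kernels, reduce upon composition with $\mcl{P}[B_{22}]^{-1}$ to a Fredholm operator on a single interval; and any Hilbert--Schmidt operator on $L_2[\Omega_{a}^{b}]$ is itself a 1D-PI operator (take the two Volterra kernels equal to the full kernel and the multiplier zero). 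Thus the Schur complement $\mcl{P}[B_{11}]-\smallint_{\Omega_{c}^{d}}[B_{12}]\,\mcl{P}[B_{22}]^{-1}\,\smallint_{\Omega_{a}^{b}}[B_{21}]$ is again a 1D-PI operator on $L_2^{n_1}[\Omega_{a}^{b}]$, and I would invert it by a second application of the 1D result.

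Having eliminated both infinite-dimensional blocks, the remaining relation on $\R^{n_0}$ is governed by a finite-dimensional Schur complement $M\in\R^{n_0\times n_0}$, whose nonsingularity is again furnished by the conditions of Lemma~5. Inverting $M$ and reassembling the LDU factorization---with each triangular factor and each diagonal factor inverted and recombined using the product map $\mcl{L}_{\times}$ and sum map $\mcl{L}_{+}$---produces $\hat{\mcl{R}}\in\Pi_{011}^{\text{n}\times\text{n}}$ satisfying $\hat{\mcl{R}}\mcl{R}=\mcl{R}\hat{\mcl{R}}=I$. Uniqueness is immediate from the algebra: if $\hat{\mcl{R}}$ and $\td{\mcl{R}}$ were both two-sided inverses, then $\hat{\mcl{R}}=\hat{\mcl{R}}(\mcl{R}\td{\mcl{R}})=(\hat{\mcl{R}}\mcl{R})\td{\mcl{R}}=\td{\mcl{R}}$.

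The main obstacle I anticipate is verifying closure under the Schur-complement step---precisely, confirming that every composition of an inverted diagonal block with the $2$D-kernel couplings $B_{12},B_{21}$ is representable by parameters in $\mcl{N}_{1D}$ (and, at the $\R^{n_0}$ level, collapses to an honest matrix), and that the invertibility hypotheses of Lemma~5 are exactly those needed to make each successive pivot and the final $M$ nonsingular. A secondary point to check is that the left and right inverses of each factor coincide, so that the reassembled $\hat{\mcl{R}}$ is genuinely two-sided rather than merely one-sided.
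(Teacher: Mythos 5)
Your proposal cannot be checked against an in-paper proof, because the paper does not contain one: this proposition is one of the properties the authors explicitly quote from the cited reference, with both its hypotheses (``the conditions of Lemma~5'') and its proof deferred there, where the inverse is given as an explicit parameter map $\mcl{L}_{\text{inv}}:\mcl{N}_{011}^{\text{n}\times\text{n}}\rightarrow\mcl{N}_{011}^{\text{n}\times\text{n}}$ (the same map invoked in Fig.~\ref{fig:Tmap_matrices} of this paper). Judged on its own terms, your strategy --- block LDU elimination on $\R^{n_0}\times L_2^{n_1}[\Omega_{a}^{b}]\times L_2^{n_1}[\Omega_{c}^{d}]$, inverting the two 1D-PI diagonal blocks by the known 1D inversion result and reducing the $\R^{n_0}$ block to a matrix Schur complement --- is sound, and it is essentially the constructive route by which such explicit inverse formulas are derived. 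The structural facts you invoke are all correct: a Hilbert--Schmidt integral operator on $L_2[\Omega_{a}^{b}]$ is a 1D-PI operator with zero multiplier and both kernels equal to the full kernel; hence $\smallint_{\Omega_{c}^{d}}[B_{12}]\,\mcl{P}[B_{22}]^{-1}\,\smallint_{\Omega_{a}^{b}}[B_{21}]$ has an $L_2$ kernel on $\Omega_{a}^{b}\times\Omega_{a}^{b}$, so the Schur complement stays in 1D-PI form with its multiplier part equal to that of $B_{11}$; the cross terms coupling $\R^{n_0}$ with the $L_2$ blocks collapse to matrices, multiplier operators, or separable kernels; block-triangular factors with identity diagonal invert by a finite Neumann series, so the assembled inverse is genuinely two-sided; and uniqueness of a two-sided inverse is immediate.

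The one genuine gap --- which you flag yourself, and which is unavoidable in a blind attempt --- is the hypothesis alignment: since the ``conditions of Lemma~5'' are never stated in this paper, you cannot verify that they supply exactly the invertibility of each pivot in your chosen elimination order (essential invertibility of the multiplier parts of $B_{22}$ and of the 1D Schur complement, bounded invertibility of those two operators, and nonsingularity of the final matrix on $\R^{n_0}$). As written, your argument therefore establishes a statement of the form ``if each pivot is invertible, then a unique two-sided PI inverse exists,'' and a bridging step would still be needed to show this is implied by (or equivalent to) the reference's conditions, which may be phrased for a different elimination order or directly in terms of derived parameters. This is a caveat on completeness rather than a flaw in the method.
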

	We denote the associated parameter map as $\mcl{L}_{\text{inv}}:\mcl{N}^{\text{n}\times\text{n}}_{011}\rightarrow \mcl{N}^{\text{n}\times\text{n}}_{011}$, so that, for any $R\in\mcl{N}^{\text{n}\times\text{n}}_{011}$ as in Lemma~5 in~\cite{jagt2021PIEArxiv},
	\begin{align*}
		\mcl{P}[\hat{R}]\mcl{P}[R]&=I,	&
		&\text{if and only if }\	\hat{R}=\mcl{L}_{\text{inv}}(R).
	\end{align*}
	
	\subsubsection{The composition of a differential operator with a suitable 2D-PI operator is a 2D-PI operator}\label{enum:PIprop_dif}
	
	We refer to Lemmas~6 and~7 in~\cite{jagt2021PIEArxiv} for more information.
	
	\subsubsection{The adjoint of a 2D-PI operator is a 2D-PI operator}\label{enum:PIprop_adj}
	
	Here we define the adjoint of a PI operator $\mcl{P}\in\Pi_{0112}^{\text{n}\times\text{m}}$, as the unique operator $\mcl{P}^*\in\Pi_{0112}^{\text{m}\times\text{n}}$ that satisfies
	\begin{align*}
		\ip{\mbf{v}}{\mcl{P}\mbf{u}}_{\text{Z}_{2}^{\text{n}}}=\ip{\mcl{P}^*\mbf{v}}{\mbf{u}}_{\text{Z}_{2}^{\text{m}}}
	\end{align*}
	for any $\mbf{u}\in\text{Z}_{2}^{\text{m}}$ and $\mbf{v}\in\text{Z}_{2}^{\text{n}}$, where $\text{n},\text{m}\in\N^3$.\\[-0.6em]

	\subsubsection{A cone of positive semidefinite 2D-PI operators can be parameterized by positive semidefinite matrices}\label{enum:PIprop_PSD}
	
	Here we say that an operator $\mcl{P}\in\Pi_{0112}^{\enn{}\times\enn{}}$ is positive semidefinite or (strictly) positive definite, denoted as $\mcl{P}\geq0$ and $\mcl{P}>0$, if for any $\mbf{v}\in\text{Z}_{2}^{\enn{}}$ with $\mbf{v}\neq\mbf{0}$ and some $\epsilon>0$,
	\begin{align*}
		\ip{\mbf{v}}{\mcl{P}\mbf{v}}_{\text{Z}_{2}^{\enn{}}}&\geq0,	&	&\text{or respectively,}
		&
		\ip{\mbf{v}}{\mbf{v}}_{\text{Z}_{2}^{\enn{}}}\geq \epsilon\ip{\mbf{v}}{\mbf{v}}_{\text{Z}_{2}^{\text{n}}}.
	\end{align*}
	
	\medskip
	
	Using Properties~\ref{enum:PIprop_sum} through~\ref{enum:PIprop_dif}, we will derive an equivalent PIE representation of linear 2D PDEs with inputs and outputs in Section~\ref{sec:Representation_of_ioSystems}. For this, we note that Property~\ref{enum:PIprop_dif} holds for PI operators mapping $\text{Z}_{2}^{\{n_0,0,n_2\}}$
	as well, as shown in Appendix~\ref{sec:appx_div_operator}. In Section~\ref{sec:Positive_PI_Params}, we prove that Properties~\ref{enum:PIprop_adj} and~\ref{enum:PIprop_PSD} also hold for PI operators on $\text{Z}_{2}^{\{n_0,0,n_2\}}$, allowing us to numerically solve the $L_2$-gain LPI presented in Section~\ref{sec:LPI_L2_Gain} using semidefinite programming.

	\subsection{Partial Integral Equations}
	
	A Partial Integral Equation (PIE) is a linear differential equation, parameterized by PI operators, describing the evolution of a fundamental state $\mbf{v}(t)\in L_2[\Omega_{ac}^{bd}]$. 
	For any linear, 2nd order, autonomous, 2D PDE, there exists an equivalent PIE representation, as well as a differential operator $\mscr{D}$ and PI operator $\mcl{T}$ such that any solution $\mbf{v}(t)$ to the PIE satisfies $\mbf{v}(t)=\mscr{D}\bar{\mbf{v}}(t)$, where $\bar{\mbf{v}}(t)=\mcl{T}\mbf{v}(t)$ is a solution to the PDE.
	
	\begin{example}
		Consider a simple 2D PDE on $(x,y)\in[0,1]\times[0,1]$, with Dirichlet boundary conditions,
		\begin{align}\label{eq:PDE_example}
			\dot{\bar{\mbf{v}}}(t)&\!=\!c\bbl[\partial_x\bar{\mbf{v}}(t) + \partial_y\bar{\mbf{v}}(t)\bbr],	&
			0&\!=\!\bar{\mbf{v}}(t,0,y)\!=\!\bar{\mbf{v}}(t,x,0).
		\end{align}
		Defining the fundamental state $\mbf{v}(t)=\partial_x\partial_y\bar{\mbf{v}}(t)\in L_2$, this system may be equivalently represented by the PIE
		\begin{align*}
			\int_{0}^{x}\!\!\int_{0}^{y}\!\!\dot{\mbf{v}}(t,\theta,\nu)d\nu d\theta&\!=c\bbbl[\int_{0}^{y}\!\!\mbf{v}(t,x,\nu)d\nu +\! \int_{0}^{x}\!\!\mbf{v}(t,\theta,y)d\theta\bbbr],	
		\end{align*}
		where $\mbf{v}(t)$ solves this PIE 
		if and only if $\bar{\mbf{v}}(t):=\int_{0}^{x}\!\!\int_{0}^{y}\!\mbf{v}(t,\theta,\nu)d\nu d\theta$ solves the PDE~\eqref{eq:PDE_example}. Defining $R:=\smallbmat{0&0&0\\0&R_{11}&0\\0&0&0}\in\mcl{N}_{2D}$ and $Q:=\smallbmat{0&Q_{01}&0\\Q_{10}&0&0\\0&0&0}\in\mcl{N}_{2D}$, where $R_{11}(x,y,\theta,\nu)=1$ and $Q_{01}(x,y,\nu)=Q_{10}(x,y,\theta)=c$, we may equivalently express the PIE as
		\begin{align*}
			\mcl{T}\dot{\mbf{v}}(t)&=\mcl{A}\mbf{v}(t),
		\end{align*}
		where $\mcl{T}:=\mcl{P}[R]\in\Pi_{2D}^{1\times 1}$ and $\mcl{A}:=\mcl{P}[Q]\in\Pi_{2D}^{1\times 1}$. 
		
		Consider now including a disturbance $w(t)\in\R$ and regulated output $z(t)\in\R$ in the PDE, as
		\begin{align}\label{eq:PDE_example_io}
			\dot{\bar{\mbf{v}}}(t)&=c\bbl[\partial_x\bar{\mbf{v}}(t) + \partial_y\bar{\mbf{v}}(t)\bbr]+kw(t),	\nonumber\\
			z(t)&=\int_{0}^{1}\int_{0}^{1}\bar{\mbf{v}}(t,x,y)dy dx.
		\end{align}
		Then, assuming the same boundary conditions, the system may be equivalently represented by the PIE
		\begin{align}\label{eq:PIE_example_io}
			\mcl{T}\dot{\mbf{v}}(t)&=\mcl{A}\mbf{v}(t)+kw(t)	&	&\hspace*{-0.3cm}=\mcl{A}\mbf{v}(t)+\mcl{B}w(t),	\nonumber\\[-0.2em]
			z(t)&=\bbbl(\smallint_{\Omega_{00}^{11}}[I]\circ\mcl{T}\bbbr)\mbf{v}(t)	&	&\hspace*{-0.3cm}=\mcl{C}\mbf{v}(t), \\[-1.8em] \nonumber
		\end{align}
		where $\mcl{T},\mcl{A}\in\Pi_{2D}^{1\times 1}=\Pi_{0112}^{\{0,0,1\}\times\{0,0,1\}}$ are as before, and we define $\mcl{B}:=\text{M}[k]\in\Pi_{0112}^{\{0,0,1\}\times\{1,0,0\}}$ and
		\begin{align*}
			\mcl{C}&:=\smallint_{\Omega_{00}^{11}}[(1-x)(1-y)]	&	&\in\Pi_{0112}^{\{1,0,0\}\times\{0,0,1\}}.
		\end{align*}
		Then, for any input $w$, the pair $(\mbf{v},z)$ is a solution to the PIE~\eqref{eq:PIE_example_io} if and only if $(\mcl{T}\mbf{v},z)$ is a solution to the PDE~\eqref{eq:PDE_example_io}.
		
	\end{example}

	\section{An LPI for $L_2$-Gain Analysis}\label{sec:LPI_L2_Gain}
	
	In this section, we present the main technical result of this paper.
	In particular, we provide an LPI for verifying a bound $\gamma$ on the $L_2$ gain of a PIE of the form
	\begin{align}\label{eq:primal_PIE}
	\mcl{T} \dot{\mbf{v}}(t)&= \mcl{A}\mbf{v}(t)+\mcl{B}w(t),	
	&	\mbf{v}(0)&=\mbf{0}, \nonumber\\
	z(t)&=\mcl{C}\mbf{v}(t)+\mcl{D}w(t),	
	\end{align}
	where $w(t)\in\R^{n_w}$, $z(t)\in\R^{n_z}$, and $\mbf{v}(t)\in L_2^{n_v}[\Omega_{ac}^{bd}]$ represent respectively the value of the input, output, and (fundamental) state at any time $t\geq 0$, and where
	\begin{align*}
	\mcl{T},\mcl{A}&\in\Pi_{0112}^{\{0,0,n_v\}\times\{0,0,n_v\}},	&
	\mcl{B}&\in\Pi_{0112}^{\{0,0,n_v\}\times\{n_w,0,0\}},	\\
	\mcl{C}&\in\Pi_{0112}^{\{n_z,0,0\}\times\{0,0,n_v\}},	&
	\mcl{D}&\in\Pi_{0112}^{\{n_z,0,0\}\times\{n_w,0,0\}}.
	\end{align*}
	\begin{lem}\label{lem:KYP}
		Let $\gamma>0$, and suppose there exists a 2D-PI operator $\mcl{P}\in\Pi_{2D}^{n_v\times n_v}$ such that $\mcl{P}=\mcl{P}^*>0$ and
		\begin{align}\label{eq:KYP_inequality}
		&\bmat{-\gamma I &\! \mcl{D} &\! \mcl{C}\\
			(\cdot)^* &\! -\gamma I &\! \mcl{B}^*\mcl{P}\mcl{T}	\\
			(\cdot)^* &\! (\cdot)^* &\! (\cdot)^* + \mcl{T}^*\mcl{P}\mcl{A}}	
		\leq 0
		\end{align}
		Then, for any $w\in L_2^{n_w}[0,\infty)$, if $(w,z)$ satisfies the PIE~\eqref{eq:primal_PIE}, then $z\in L_2^{n_z}[0,\infty)$ and $\|z\|_{L_2}\leq\gamma \|w\|_{L_2}$.
	\end{lem}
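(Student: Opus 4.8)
The plan is to run a standard dissipativity argument with a quadratic storage function, adapted to the PI-operator setting. First I would define
\begin{align*}
V(\mbf{v}):=\ip{\mcl{T}\mbf{v}}{\mcl{P}\mcl{T}\mbf{v}}_{L_2},
\end{align*}
which satisfies $V(\mbf{v})\geq 0$ for all $\mbf{v}$ since $\mcl{P}=\mcl{P}^*>0$, and $V(\mbf{v}(0))=0$ since $\mbf{v}(0)=\mbf{0}$. Differentiating along a solution of the PIE~\eqref{eq:primal_PIE}, using self-adjointness of $\mcl{P}$ and substituting the state equation $\mcl{T}\dot{\mbf{v}}=\mcl{A}\mbf{v}+\mcl{B}w$, I would obtain
\begin{align*}
\dot{V}(\mbf{v}(t))=2\ip{\mcl{P}\mcl{T}\mbf{v}}{\mcl{T}\dot{\mbf{v}}}=\ip{\mbf{v}}{(\mcl{A}^*\mcl{P}\mcl{T}+\mcl{T}^*\mcl{P}\mcl{A})\mbf{v}}+2\ip{w}{\mcl{B}^*\mcl{P}\mcl{T}\mbf{v}}.
\end{align*}

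Next I would convert the block inequality~\eqref{eq:KYP_inequality} into a dissipation inequality. The $(1,1)$ block of the operator in~\eqref{eq:KYP_inequality} is $-\gamma I$ on $\R^{n_z}$, which for $\gamma>0$ is boundedly invertible and negative definite, so the operator Schur complement applies: \eqref{eq:KYP_inequality} is equivalent to the reduced inequality on $\R^{n_w}\times L_2^{n_v}$,
\begin{align*}
\bmat{-\gamma I + \f{1}{\gamma}\mcl{D}^*\mcl{D} & \mcl{B}^*\mcl{P}\mcl{T}+\f{1}{\gamma}\mcl{D}^*\mcl{C}\\ (\cdot)^* & \mcl{A}^*\mcl{P}\mcl{T}+\mcl{T}^*\mcl{P}\mcl{A}+\f{1}{\gamma}\mcl{C}^*\mcl{C}}\leq 0.
\end{align*}
Evaluating this on the pair $\smallbmat{w(t)\\\mbf{v}(t)}$, recognizing $\f{1}{\gamma}\norm{\mcl{C}\mbf{v}+\mcl{D}w}^2=\f{1}{\gamma}\norm{z}^2$ via the output equation of~\eqref{eq:primal_PIE}, and comparing with the expression for $\dot V$ above, I would arrive at the pointwise dissipation inequality
\begin{align*}
\dot{V}(\mbf{v}(t))\leq \gamma\norm{w(t)}^2-\f{1}{\gamma}\norm{z(t)}^2.
\end{align*}

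Finally, integrating this inequality over $t\in[0,T]$ and using $V(\mbf{v}(0))=0$ together with $V(\mbf{v}(T))\geq 0$ gives $\f{1}{\gamma}\int_0^T\norm{z(t)}^2dt\leq \gamma\int_0^T\norm{w(t)}^2dt\leq \gamma\norm{w}_{L_2[0,\infty)}^2$. Letting $T\to\infty$ then shows $z\in L_2^{n_z}[0,\infty)$ and $\norm{z}_{L_2}^2\leq \gamma^2\norm{w}_{L_2}^2$, i.e.\ $\norm{z}_{L_2}\leq\gamma\norm{w}_{L_2}$, which is the claim.

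The step I expect to require the most care is the differentiation of $V$ along solutions: the identity $\f{d}{dt}\ip{\mcl{T}\mbf{v}(t)}{\mcl{P}\mcl{T}\mbf{v}(t)}_{L_2}=2\ip{\mcl{P}\mcl{T}\mbf{v}(t)}{\mcl{T}\dot{\mbf{v}}(t)}_{L_2}$ presupposes that $t\mapsto\mbf{v}(t)$ is differentiable with $\mcl{T}\dot{\mbf{v}}$ well defined and that $t\mapsto V(\mbf{v}(t))$ is absolutely continuous, which are regularity facts about the admissible solutions of~\eqref{eq:primal_PIE}. I would handle this by working with solutions for which $\mbf{v}$ and $\mcl{T}\mbf{v}$ are sufficiently regular in time, as provided by the well-posedness theory underlying the PIE, and if needed pass to the general case by a density argument. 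By contrast, the remaining steps are routine: the Schur complement relies only on bounded invertibility of the $(1,1)$ block $-\gamma I$, while the $*$-algebra properties (Properties~\ref{enum:PIprop_sum}--\ref{enum:PIprop_adj}) ensure every composite operator appearing (e.g.\ $\mcl{B}^*\mcl{P}\mcl{T}$, $\mcl{D}^*\mcl{C}$, $\mcl{A}^*\mcl{P}\mcl{T}$) is a well-defined PI operator.
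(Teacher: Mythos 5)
Your proposal is correct and follows the same dissipativity skeleton as the paper's proof: the identical storage function $V(\mbf{v})=\ip{\mcl{T}\mbf{v}}{\mcl{P}\mcl{T}\mbf{v}}_{L_2}$, the same computation of $\dot{V}$ using $\mcl{P}=\mcl{P}^*$ and $\mcl{T}\dot{\mbf{v}}=\mcl{A}\mbf{v}+\mcl{B}w$, and the same final integration (your passage through finite $T$ before letting $T\to\infty$ is in fact slightly more careful than the paper's direct integration over $[0,\infty)$). The one genuine difference is how the block inequality~\eqref{eq:KYP_inequality} is converted into the dissipation inequality. You take an operator Schur complement with respect to the $(1,1)$ block $-\gamma I$ and then substitute the output equation into the reduced $2\times 2$ inequality; the paper instead tests the full $3\times 3$ quadratic form against the augmented vector $\smallbmat{z(t)/\gamma\\ w(t)\\ \mbf{v}(t)}$ and verifies by direct expansion that it equals $\dot{V}(\mbf{v}(t))-\gamma\|w(t)\|^2+\gamma^{-1}\|z(t)\|^2$, so negativity of the operator immediately yields the dissipation inequality. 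The two mechanisms are algebraically the same thing---placing $z/\gamma$ in the first slot is precisely the minimizing substitution in the completion of squares that proves the Schur complement equivalence---but the paper's route has the advantage of needing no auxiliary lemma beyond inner-product manipulations, whereas yours invokes a Schur complement for self-adjoint block operators on $\R^{n_z}\times\bl(\R^{n_w}\times L_2^{n_v}\br)$, a fact not among the cited PI-operator properties. That invocation is legitimate here (the inverted block is $-\gamma I$ on the finite-dimensional space $\R^{n_z}$, and the completion-of-squares proof carries over verbatim to the Hilbert-space setting), but a complete write-up should include that short justification rather than cite it as standard. Your closing remarks on the regularity needed to differentiate $t\mapsto V(\mbf{v}(t))$ match the assumptions the paper's proof makes implicitly, which likewise differentiates $V$ along solutions without further comment.
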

	
	\begin{proof}
		Define a storage function $V:L_2^{n_v}\rightarrow\R$ as $V(\mbf{v}):=\ip{\mcl{T}\mbf{v}}{\mcl{P}\mcl{T}\mbf{v}}_{L_2}$.
		Since $\mcl{P}> 0$, we have $V(\mbf{v})>0$ for any $\mbf{v}\neq \mbf{0}$.
		In addition, for any $w\in L_2[0,\infty)$, the derivative $\dot{V}(\mbf{v}(t))$ for $\mbf{v}(t)$ satisfying PIE~\eqref{eq:primal_PIE} is given by
		\begin{align*}
		\dot{V}(\mbf{v}(t))\! &=\ip{\mcl{T}\mbf{v}(t)}{\mcl{P}\mcl{T}\dot{\mbf{v}}(t)}_{L_2}	
		+\ip{\mcl{T}\dot{\mbf{v}}(t)}{\mcl{P}\mcl{T}\mbf{v}(t)}_{L_2}	\\
		&=\ip{\mcl{T}\mbf{v}(t)}{\mcl{P}\bl[\mcl{A}\mbf{v}(t)+\mcl{B}w(t)\br]}_{L_2}	\\
		&\qquad +\ip{\bl[\mcl{A}\mbf{v}(t)+\mcl{B}w(t)\br]}{\mcl{P}\mcl{T}\mbf{v}(t)}_{L_2}	\\
		&=\! \ip{\bmat{w(t)\\\mbf{v}(t)}\! }{\! 
			\bmat{0&\! \mcl{B}^*\mcl{P}\mcl{T}\\
				\mcl{T}^*\mcl{P}\mcl{B}&\!  \mcl{A}^*\mcl{P}\mcl{T}\! +\! \mcl{T}^*\mcl{P}\mcl{A}}\! \! 
			\bmat{w(t)\\\mbf{v}(t)}}_{\text{Z}_{2}^{\enn{1}}}
		\end{align*}
		where $\enn{1}:=\{n_w,0,n_v\}$ so that $\text{Z}_{2}^{\enn{1}}=\R^{n_w}\times L_2^{n_v}[\Omega_{ac}^{bd}]$. Define $\enn{2}:=\{n_z+n_w,0,n_v\}$. 
		Then, for any $w(t)\in\R^{n_w}$, and for any $\mbf{v}(t)\in L_2^{n_v}[\Omega_{ac}^{bd}]$ and $z(t)\in\R^{n_z}$ satisfying the PIE~\eqref{eq:primal_PIE} with input $w$, we have
		\begin{align*}
		&\ip{\bmat{\frac{z(t)}{\gamma}\\w(t)\\\mbf{v}(t)}\!}{\!\bmat{-\gamma I & \mcl{D} & \mcl{C}\\
				\mcl{D}^* & -\gamma I & \mcl{B}^*\mcl{P}\mcl{T}	\\
				\mcl{C}^* & \mcl{T}^*\mcl{P}\mcl{B} & \mcl{A}^*\mcl{P}\mcl{T} + \mcl{T}^*\mcl{P}\mcl{A}}\!
			\bmat{\frac{z(t)}{\gamma}\\w(t)\\\mbf{v}(t)}}_{\text{Z}_{2}^{\enn{2}}}	\\
		&=\ip{\bmat{w(t)\\\mbf{v}(t)}}{
			\bmat{0&\mcl{B}^*\mcl{P}\mcl{T}\\
				\mcl{T}^*\mcl{P}\mcl{B}& \mcl{A}^*\mcl{P}\mcl{T}+\mcl{T}^*\mcl{P}\mcl{A}}
			\bmat{w(t)\\\mbf{v}(t)}}_{\text{Z}_{2}^{\enn{1}}}	\\
		&\qquad -\gamma\|w(t)\|^2 +\gamma^{-1}\bbl[\ip{\mbf{v}(t)}{\mcl{C}^*z(t)}_{L_2} + \ip{w(t)}{\mcl{D}^* z(t)}\bbr]	\\
		&\qquad\quad+\gamma^{-1}\bbl[\ip{z(t)}{\mcl{C}\mbf{v}(t)}_{L_2} + \ip{z(t)}{\mcl{D}w(t)}\bbr] -\gamma^{-1}\|z(t)\|^2	\\
		&=\dot{V}\bl(\mbf{v}(t)\br) - \gamma\|w(t)\|^2 +\gamma^{-1}\|z(t)\|^2
		\end{align*}
		Invoking Eqn.~\eqref{eq:KYP_inequality}, it follows that
		\begin{align*}
		&\dot{V}\bl(\mbf{v}(t)\br)\leq  \gamma\|w(t)\|^2-\gamma^{-1}\|z(t)\|^2.
		\end{align*}
		Integrating both sides of this inequality from $0$ up to $\infty$, noting that $V(\mbf{v}(0))=V(0)=0$, we find
		\begin{align*}
		0&\leq
		\lim_{t\rightarrow\infty} V\bl(\mbf{v}(t)\br)
		\leq \gamma\|w\|^2_{L_2} -\gamma^{-1}\|z\|^2_{L_2},
		\end{align*}
		and therefore $\|z\|_{L_2}\leq \gamma\|w\|_{L_2}$.
	\end{proof}
	
	Lemma~\ref{lem:KYP} proves that, if the LPI~\eqref{eq:KYP_inequality} is feasible for some $\gamma>0$, then the $L_2$-gain $\frac{\|z\|_{L_2}}{\|w\|_{L_2}}$ of the 2D PIE~\eqref{eq:primal_PIE} is bounded by $\gamma$. In Section~\ref{sec:Representation_of_ioSystems}, we will show that any well-posed, linear, 2nd order 2D PDE can be equivalently represented as a PIE of the form~\eqref{eq:primal_PIE} -- thus allowing the $L_2$-gain to be tested as an LPI. In Section~\ref{sec:Positive_PI_Params}, we then show that feasibility of an LPI can be tested as an LMI, allowing the $L_2$-gain of 2D PDEs to be verified using semidefinite programming -- a result we show in Section~\ref{sec:L2_gain_PDE}.

	\section{A PIE Representation of 2D Partial Differential Input-Output Systems}\label{sec:Representation_of_ioSystems}
	
	Having shown that the $L_2$-gain of a 2D PIE can be tested by solving an LPI, we now show that equivalent PIE representations can be derived for systems belonging to a large class of 2D PDEs. In particular, in Subsection~\ref{sec:subsec:ioPDE_Representation}, we present a standardized format for representing linear, 2nd order, 2D PDEs with finite-dimensional input and output signals. In Subsection~\ref{sec:subsec:Fundamental_state}, we then derive a bijective map between the PDE state space $X_{w}\subset L_2[\Omega_{ac}^{bd}]$, constrained by boundary and continuity conditions, and the \textit{fundamental} state space $L_2[\Omega_{ac}^{bd}]$. Finally, in Subsection~\ref{sec:subsec:ioPIE_Representation}, we prove that for any solution to the PDE, an equivalent solution exists to an associated PIE, presenting the PI operators $\{\mcl{T}_0,\mcl{T}_1,\mcl{A},\mcl{B},\mcl{C},\mcl{D}\}$ defining this representation. 		

	\subsection{A Standardized PDE Format in 2D}\label{sec:subsec:ioPDE_Representation}
	
	We consider a coupled linear PDE of the form
	\begin{align}\label{eq:standard_PDE}
	\dot{\bar{\mbf{v}}}(t)
	&=\bar{\mcl{A}}\bar{\mbf{v}}(t)+\bar{\mcl{B}}w(t),	\nonumber\\
	z(t)&=\bar{\mcl{C}}\bar{\mbf{v}}(t) + \bar{\mcl{D}}w(t),
	\end{align}
	where at any time $t\geq0$, $w(t)\in\R^{n_w}$, $z(t)\in\R^{n_z}$, and $\bar{\mbf{v}}(t)\in X_{w(t)}$, where $X_{w(t)}\subseteq L_2^{n_0+n_1+n_2}[\Omega_{ac}^{bd}]$ includes the boundary conditions and continuity constraints, defined as
	\begin{align}\label{eq:Xset1}
	X_{w} :=\! \bbbbl\{
	\bar{\mbf{v}}=\bbbbl[\ \mat{\bar{\mbf{v}}_0\\[-0.2ex]\bar{\mbf{v}}_1\\[-0.2ex]\bar{\mbf{v}}_2}\ \bbbbr]\! \in\! \bbbbl[\ \mat{L_2^{n_0}\\[-0.2ex]W_1^{n_1}\\[-0.2ex]W_2^{n_2}}\ \bbbbr]
	 \bbbbl\lvert\ 
	\bar{\mcl{E}}_0\bar{\mbf{v}}+\mcl{E}_1 w=0 
	\bbbbr\},
	\end{align}
	and where the operators $\{\bar{\mcl{A}},\bar{\mcl{B}},\bar{\mcl{C}},\bar{\mcl{D}},\bar{\mcl{E}}_0,\mcl{E}_1\}$ are all linear. In particular, the PDE dynamics are defined by the operators \\[-3.5ex]
	\begin{align}\label{eq:PDE_operators}
	\bar{\mcl{A}}&\! :=\! \sum_{i,j=0}^{2}\text{M}[A_{ij}]\ \partial_x^i \partial_y^j\  \text{M}[S_{i,j}],	&
	\bar{\mcl{B}}&:=\text{M}[B],		\nonumber\\[-0.6ex]
	\bar{\mcl{C}}&\! :=\!  \sum_{i,j=0}^{2} \smallint_{x=a}^{b}\! [I]\smallint_{y=c}^{d}\! [C_{ij}]\ \partial_x^i \partial_y^j\  \text{M}[S_{i,j}], &
	\bar{\mcl{D}}&\! :=\! \text{M}[D], \\[-4.0ex]
	\ \nonumber
	\end{align}
	parameterized by matrix-valued functions
	\begin{align*}
	\bmat{A_{ij}&B\\ C_{ij}&D}\in
	\bmat{L_2^{n_v\times m_{ij}}[\Omega_{ac}^{bd}]&L_2^{n_v\times n_w}[\Omega_{ac}^{bd}]\\
		L_2^{n_z\times m_{ij}}[\Omega_{ac}^{bd}]&\R^{n_z\times n_w}}, 
	\ \nonumber
	\end{align*}
	where $n_v=n_0+n_1+n_2$ and $m_{ij}:=\sum_{k=\max\{i,j\}}^{2}n_k$,
	and where the matrix
	\begin{align*}
	S_{i,j} := \begin{cases}
	I_{n_0+n_1+n_2},	&	\text{if } i=j=0,	\\
	\bmat{0_{(n_1+n_2)\times n_0} & I_{n_1+n_2}},	&	\text{if } \max\{i,j\}=1,	\\
	\bmat{0_{n_2\times n_0} & 0_{n_2\times n_1} & I_{n_2}},	&	\text{if } \max\{i,j\}=2,	\\
	\end{cases}
	\end{align*}
	extracts all elements $\mbf{u}(t)=S_{i,j}\bar{\mbf{v}}(t)\in W^{m_{ij}}_{\max\{i,j\}}[\Omega_{ac}^{bd}]$ of the state $\bar{\mbf{v}}(t)$ which are differentiable up to at least order $i$ in $x$ and $j$ in $y$, for any $t\geq0$. In addition, the state $\bar{\mbf{v}}(t)$ at each time is constrained by the boundary conditions $\bar{\mcl{E}}_0\bar{\mbf{v}}(t)+\mcl{E}_1 w(t)=0$,
	where
	\begin{align}\label{eq:BC_Operators}
	 \bar{\mcl{E}}_0&=\mcl{P}[E_0]\ \Lambda_{\text{bf}},
	 &	&\text{and}	&
	 \mcl{E}_1&=\text{M}[E_{1}],
	\end{align}
	for a matrix-valued function $E_1\in\text{Z}_{1}^{\enn{b}\times n_w}[\Omega_{ac}^{bd}]$ and parameters $E_0\in\mcl{N}_{011}^{\enn{b}\times \enn{f}}[\Omega_{ac}^{bd}]$, where $\enn{b}:=\{n_1+4n_2,n_1+2n_2\}$ corresponds to the number of boundary conditions, and $\enn{f}:=\{4n_1+16n_2,2n_1+4n_2\}$,
	and where the operator $\Lambda_{\text{bf}}:L_2^{n_0}\!\times\! W_1^{n_1}\!\times\! W_2^{n_2}\rightarrow\text{Z}_{1}^{\enn{f}}$ 
	extracts all the possible boundary values for the state components $\bar{\mbf{v}}_1$ and $\bar{\mbf{v}}_2$, as limited by differentiability.
	In particular,
	\begin{align}\label{eq:Lambda_bf}
	\Lambda_{\text{bf}}\! =\! \bmat{\Lambda_1\\\Lambda_2\\\Lambda_3}\!: L_2^{n_0}\!\times W_1^{n_1}\!\times W_2^{n_2}\!\rightarrow\! \bmat{\R^{4n_1+16n_2}\\ L_2^{2n_1+4n_2}[\Omega_{a}^{b}]\\ L_2^{2n_1+4n_2}[\Omega_{c}^{d}]},
	\end{align}
	where \\[-4.25ex]
	{\small
	\begin{align*}
	\Lambda_1&\! :=\!
	\bmat{\begin{array}{l}
		0~~\Delta_1~\thinspace 0\\
		0~~0~~~\Delta_1 \\
		0~~0~~~\Delta_1\partial_x \\
		0~~0~~~\Delta_1\partial_y  \\
		0~~0~~~\Delta_1\partial_{xy}
		\end{array}}
	,\enspace
	\bmat{\Lambda_2\\ \Lambda_3}\! :=\!
	\bmat{\begin{array}{l}
		0~~\Delta_2 \partial_x~\thinspace 0   \\
		0~~0~~~~~~\Delta_2 \partial_x^2  \\
		0~~0~~~~~~\Delta_2\partial_x^2\partial_y \\
		0~~\Delta_3 \partial_y~\thinspace 0\\
		0~~0~~~~~~\Delta_3\partial_y^2   \\
		0~~0~~~~~~\Delta_3\partial_x\partial_y^2
		\end{array}},
	\end{align*}
	}
	and where we use the Dirac operators $\Delta_k$ defined as
	\[
	\Delta_1=\bmat{
		\Delta_x^a\Delta_y^c\\
		\Delta_x^b\Delta_y^c\\
		\Delta_x^a\Delta_y^d\\
		\Delta_x^b\Delta_y^d
	},\quad \Delta_2= \bmat{
		\Delta_y^c    \\
		\Delta_y^d
	},\quad \Delta_3= \bmat{
		\Delta_x^a    \\
		\Delta_x^b
	}.
	\]	
	
	\begin{defn}[Solution to the PDE]
	For a given input signal $w$ and given initial conditions $\bar{\mbf{v}}_{\text{I}}\in X_{w(0)}$, we say that $(\bar{\mbf{v}},z)$ is a solution to the PDE defined by $\{A_{ij},B,C_{ij},D,E_0,E_1\}$ if $\bar{\mbf{v}}$ is Frech\'et differentiable, $\bar{\mbf{v}}(0)=\bar{\mbf{v}}_{\text{I}}$, and for all $t\geq0$, $\bar{\mbf{v}}(t)\in X_{w(t)}$, and $(\bar{\mbf{v}}(t),z(t))$ satisfies Eqn.~\eqref{eq:standard_PDE} with the operators $\{\bar{\mcl{A}},\bar{\mcl{B}},\bar{\mcl{C}},\bar{\mcl{D}},\bar{\mcl{E}}_0,\mcl{E}_1\}$ defined as in~\eqref{eq:PDE_operators} and~\eqref{eq:BC_Operators}.
	\end{defn}

	\subsection{A Bijection Between the Fundamental and PDE State}\label{sec:subsec:Fundamental_state}
	
	In the PDE~\eqref{eq:standard_PDE} defined by $\{A_{ij},B,C_{ij},D,E_0,E_1\}$, the state $\bar{\mbf{v}}(t)\in X_{w(t)}$ at each time $t\geq 0$ is constrained to satisfy continuity constraints and boundary conditions, defined by $E_0$ and $E_1$. For any such $\bar{\mbf{v}}\in X_{w}$, we define an associated fundamental state $\mbf{v}\in L_2^{n_v}[\Omega_{ac}^{bd}]$, free of boundary and continuity constraints, using a
	 differential operator $\mscr{D}$:
	\begin{align*}
	\mbf{v}:=\bmat{\mbf{v}_0\\\mbf{v}_1\\\mbf{v}_2}= \underbrace{\bmat{I_{n_0}& & \\ &\partial_{x}\partial_{y} & \\ & &\partial_{x}^2\partial_{y}^2}}_{\mscr{D}}\bmat{\bar{\mbf{v}}_0\\\bar{\mbf{v}}_1\\\bar{\mbf{v}}_2}=\mscr{D}\bar{\mbf{v}}.\\[-4.5ex]
	\end{align*}
	In this subsection, we show that if the parameters $E_0,E_1$ define well-posed boundary conditions, then there exist associated PI operators $\mcl{T}_0,\mcl{T}_1$ such that
	\begin{align*}
	\bar{\mbf{v}} &= \mcl{T}_0\mscr{D}\bar{\mbf{v}} + \mcl{T}_1 w,
	&	&\text{and}	&
	\mbf{v} &= \mscr{D}\bl[\mcl{T}_0\mbf{v} + \mcl{T}_1 w\br],
	\end{align*}
	for any $\bar{\mbf{v}}\in X_{w}$ and $\mbf{v}\in L_2$. 
	To prove this result, we recall the following lemma from~\cite{jagt2021PIEArxiv}, expressing the PDE state in terms of the fundamental state and a set of boundary values.
	
	\begin{figure*}[!ht]
		\hrulefill
		\footnotesize
		\begin{flalign}
		&K_1=\bmat{K_{30} \\ \{0,K_{31},0\} \\ \{0,K_{32},0\}}\in \bmat{L_2 \\ \mcl{N}_{1D\rightarrow 2D} \\ \mcl{N}_{1D\rightarrow 2D}},		&
		&K_2= \bmat{T_{00}&0&0\\0&K_{33}&0\\0&0&0}\in\mcl{N}_{2D},	 \label{eq:K_mat} \\
		H_1 &= \bmat{H_{00}&H_{01}&H_{02}\\0&\{H_{11},0,0\}&0\\0&0&\{H_{22},0,0\}}\in\mcl{N}_{011},	&
		H_2 &= \bmat{H_{03}\\\{H_{13},0,0\}\\\{H_{23},0,0\}}\in\bmat{L_2\\\mcl{N}_{2D\rightarrow 1D}\\\mcl{N}_{2D\rightarrow 1D}},		&	& \label{eq:H_mat}
		\end{flalign}
		where $T_{00}$ is as defined in~\eqref{eq:Tmat} in Fig.~\ref{fig:Tmap_matrices}, and
		{\tiny
			\begin{align}
			&\mat{K_{31}(x,y,\theta)=
				\bmat{
					0&0&0\\
					I_{n_1}&0&0\\
					0&(x-\theta)&(y-c)(x-\theta)
				},		&
				&\hspace*{1.65cm}K_{32}(x,y,\nu)=
				\bmat{
					0&0&0\\
					I_{n_1}&0&0\\
					0&(y-\nu)&(x-a)(y-\nu)
				}, 	&
				&T_{00}=\bmat{I_{n_0}&0&0\\0&0&0\\0&0&0}},	\nonumber\\
			&\mat{K_{30}(x,y)=
				\bmat{
					0&0&0&0&0\\
					I_{n_1}&0&0&0&0\\
					0&I_{n_2}&(x-a)&(y-c)&(y-c)(x-a)
				},	&	
				\hspace{2.0cm}
				&K_{33}(x,y,\theta,\nu)=
				\bmat{
					0&0&0\\
					0&I_{n_1}&0\\
					0&0&(x-\theta)(y-\nu)
			}}.  \nonumber
			\end{align}}	
		and where,
		{\tiny
			\begin{align*}
			&H_{00} =
			\bmat{I_{n_1}&0&0&0&0\\
				I_{n_1}&0&0&0&0\\
				I_{n_1}&0&0&0&0\\
				I_{n_1}&0&0&0&0\\
				0&I_{n_2}&0&0&0\\
				0&I_{n_2}&(b-a)&0&0\\
				0&I_{n_2}&0&(d-c)&0\\
				0&I_{n_2}&(b-a)&(d-c)&(d-c)(b-a)\\
				0&0&I_{n_2}&0&0\\
				0&0&I_{n_2}&0&0\\
				0&0&I_{n_2}&0&(d-c)\\
				0&0&I_{n_2}&0&(d-c)\\
				0&0&0&I_{n_2}&0\\
				0&0&0&I_{n_2}&(b-a)\\
				0&0&0&I_{n_2}&0\\
				0&0&0&I_{n_2}&(b-a)\\
				0&0&0&0&I_{n_2}\\
				0&0&0&0&I_{n_2}\\
				0&0&0&0&I_{n_2}\\
				0&0&0&0&I_{n_2}},    &
			&H_{01}(x) =
			\bmat{
				0&0&0\\I_{n_1}&0&0\\0&0&0\\I_{n_1}&0&0\\
				0&0&0\\ 0&(b-x)&0\\ 0&0&0\\
				0&(b-x)&(d-c)(b-x)\\
				0&0&0\\ 0&I_{n_2}&0 \\0&0&0\\ 0&I_{n_2}&(d-c)\\
				0&0&0\\ 0&0&(b-x) \\0&0&0\\ 0&0&(b-x)\\
				0&0&0\\ 0&0&I_{n_2} \\0&0&0\\ 0&0&I_{n_2}
			},   &
			&H_{02}(y) =
			\bmat{
				0&0&0\\0&0&0\\I_{n_1}&0&0\\I_{n_1}&0&0\\
				0&0&0\\ 0&0&0\\ 0&(d-y)&0\\
				0&(d-y)&(b-a)(d-y)\\
				0&0&0\\ 0&0&0 \\0&0&(d-y)\\ 0&0&(d-y)\\
				0&0&0\\ 0&0&0 \\0&I_{n_2}&0\\ 0&I_{n_2}&(b-a)\\
				0&0&0\\ 0&0&0 \\0&0&I_{n_2}\\ 0&0&I_{n_2}
			},   \nonumber\\
			&\begin{array}{l}
			H_{11}=
			\bmat{
				I_{n_1}&0&0\\I_{n_1}&0&0\\
				0&I_{n_2}&0\\0&I_{n_2}&(d-c)\\0&0&I_{n_2}\\0&0&I_{n_2}
			},\\
			\\
			H_{22}=
			\bmat{
				I_{n_1}&0&0\\I_{n_1}&0&0\\
				0&I_{n_2}&0\\0&I_{n_2}&(b-a)\\0&0&I_{n_2}\\0&0&I_{n_2}
			}
			,
			\end{array}
			&
			&\begin{array}{l}
			H_{13}(y)=
			\bmat{
				0&0&0\\0&I_{n_1}&0\\
				0&0&0\\ 0&0&(d-y)\\0&0&0\\0&0&I_{n_2}
			},\\
			\\
			H_{23}(x)=
			\bmat{
				0&0&0\\0&I_{n_1}&0\\
				0&0&0\\ 0&0&(b-x)\\0&0&0\\0&0&I_{n_2}
			},
			\end{array}
			&
			&H_{03}(x,y)=
			\bmat{
				0&0&0\\0&0&0\\0&0&0\\0&I_{n_1}&0\\
				0&0&0\\0&0&0\\0&0&0\\ 0&0&(d-y)(b-x)\\0&0&0\\0&0&0\\0&0&0\\ 0&0&(d-y)\\0&0&0\\0&0&0\\0&0&0\\ 0&0&(b-x)\\
				0&0&0\\0&0&0\\0&0&0\\0&0&I_{n_2}
			}.
			\end{align*}
		}
		\vspace*{-0.1cm}
		
		\hrulefill
		\vspace*{-0.25cm}
		\caption{Parameters $K_1$, $K_2$, $H_1$ and $H_2$ defining the mappings in Lemma~\ref{lem:vhat_to_v} and in Corollary~\ref{cor:vhat_to_BC}}
		\label{fig:H_K_matrices}
		\vspace*{-0.50cm}
	\end{figure*}
	
	\begin{lem}\label{lem:vhat_to_v}
		Let $\bar{\mbf{v}}\in L_2^{n_0}\!\times\! W_1^{n_1}\!\times\! W_2^{n_2}$ and define $\Lambda_{\text{bc}}:L_2^{n_0}\!\times\! W_1^{n_1}\!\times\! W_2^{n_2}\rightarrow\text{Z}_{1}^{\enn{b}}$ with $\enn{b}=\{n_1+4n_2,n_1+2n_2\}$ as
		{\small
			\begin{align}\label{eq:Lambda_bc}
			\Lambda_{\text{bc}} :=\bmat{\begin{array}{lll}
				0&\Delta_x^a \Delta_y^c&0\\
				0&0&\Delta_x^a\Delta_y^c \\
				0&0&\Delta_x^a\Delta_y^c \  \partial_x   \\
				0&0&\Delta_x^a\Delta_y^c \  \partial_y   \\
				0&0&\Delta_x^a\Delta_y^c \  \partial_x\partial_y  \\
				0& \Delta_y^c\ \partial_x & 0\\
				0&0&\Delta_y^c\  \partial_x^2  \\
				0&0&\Delta_y^c\  \partial_x^2\partial_y  \\
				0& \Delta_x^a\ \partial_y & 0\\
				0&0& \Delta_x^a\   \partial_y^2 \\
				0&0& \Delta_x^a\ \partial_x \partial_y^2
				\end{array}}.
			\end{align}
		}
		Then, if PI operators $\mcl{K}_1=\mcl{P}[K_1]\in\Pi_{2D\leftarrow 011}^{n_v\times\enn{b}}$ and $\mcl{K}_2=\mcl{P}[K_2]\in\Pi_{2D}^{n_v\times n_v}$, where $K_1$ and $K_2$ are as defined in Eqn.~\eqref{eq:K_mat} in Figure~\ref{fig:H_K_matrices}, then
		\begin{align*}
		\bar{\mbf{v}}=\mcl{K}_1 \Lambda_{\text{bc}}\bar{\mbf{v}}+\mcl{K}_2 {\mbf {v}}
		\end{align*}
		where $\mbf{v} = \mscr{D}\bar{\mbf{v}}$.
	\end{lem}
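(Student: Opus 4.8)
The plan is to verify the identity $\bar{\mbf{v}} = \mcl{K}_1\Lambda_{\text{bc}}\bar{\mbf{v}} + \mcl{K}_2\mbf{v}$ componentwise. Since the differential operator $\mscr{D}=\smallbmat{I_{n_0}&&\\&\partial_x\partial_y&\\&&\partial_x^2\partial_y^2}$ is block diagonal, the three blocks $\bar{\mbf{v}}_0\in L_2^{n_0}$, $\bar{\mbf{v}}_1\in W_1^{n_1}$, $\bar{\mbf{v}}_2\in W_2^{n_2}$ decouple, and the claim reduces to three separate reconstruction formulae expressing each $\bar{\mbf{v}}_i$ in terms of the corresponding fundamental component $\mbf{v}_i=(\mscr{D}\bar{\mbf{v}})_i$ together with a finite set of boundary and corner traces. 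Each such formula is an instance of the fundamental theorem of calculus (FTC); the content of the lemma is that the traces arising are exactly those extracted by $\Lambda_{\text{bc}}$ and that the associated integral kernels coincide with the entries of $K_1,K_2$ given in Figure~\ref{fig:H_K_matrices}.

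The first two blocks are straightforward. For $\bar{\mbf{v}}_0$ we have $\mbf{v}_0=\bar{\mbf{v}}_0$, so the reconstruction is the identity, realized by the multiplier block $T_{00}=\smallbmat{I_{n_0}&0&0\\0&0&0\\0&0&0}$ of $K_2$ with no boundary contribution. For $\bar{\mbf{v}}_1$, writing $f:=\bar{\mbf{v}}_1$ and using $\mbf{v}_1=\partial_x\partial_y f$, the iterated FTC gives
\begin{align*}
f(x,y) &= f(a,c) + \int_{a}^{x}\partial_\theta f(\theta,c)\,d\theta + \int_{c}^{y}\partial_\nu f(a,\nu)\,d\nu \\
&\qquad + \int_{a}^{x}\!\int_{c}^{y}\partial_\theta\partial_\nu f(\theta,\nu)\,d\nu\,d\theta.
\end{align*}
The four terms are precisely the corner value $\Delta_x^a\Delta_y^c f$, the edge value $\Delta_y^c\partial_x f$ (a function of $x$), the edge value $\Delta_x^a\partial_y f$ (a function of $y$), and the double integral of $\mbf{v}_1$, i.e.\ the outputs of rows 1, 6, and 9 of $\Lambda_{\text{bc}}$ together with $\mbf{v}_1$, acted on by the corresponding $I_{n_1}$ entries of $K_{30},K_{31},K_{32}$ and the $I_{n_1}$ entry of the interior kernel $K_{33}$.

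The main work is the $\bar{\mbf{v}}_2$ block, where $\mbf{v}_2=\partial_x^2\partial_y^2\bar{\mbf{v}}_2$. Here I would integrate the fourth-order mixed derivative by applying the scalar FTC twice in each variable and invoking the Cauchy repeated-integration identity $\int_{a}^{x}\!\int_{a}^{s}h(\theta)\,d\theta\,ds=\int_{a}^{x}(x-\theta)h(\theta)\,d\theta$ to collapse the nested integrals. This yields a reconstruction of $\bar{\mbf{v}}_2$ in terms of the four corner traces $\bar{\mbf{v}}_2,\partial_x\bar{\mbf{v}}_2,\partial_y\bar{\mbf{v}}_2,\partial_x\partial_y\bar{\mbf{v}}_2$ at $(a,c)$, the two edge-$\{y=c\}$ traces $\partial_x^2\bar{\mbf{v}}_2,\partial_x^2\partial_y\bar{\mbf{v}}_2$, the two edge-$\{x=a\}$ traces $\partial_y^2\bar{\mbf{v}}_2,\partial_x\partial_y^2\bar{\mbf{v}}_2$, and the interior integral of $\mbf{v}_2$ with kernel $(x-\theta)(y-\nu)$. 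The anticipated obstacle is purely bookkeeping: one must check that the polynomial prefactors generated by the repeated integration, namely $(x-a),(y-c),(x-a)(y-c)$ in the corner (multiplier) terms, $(x-\theta)$ in $K_{31}$, $(y-\nu)$ in $K_{32}$, and $(x-\theta)(y-\nu)$ in $K_{33}$, match entry-for-entry the last block rows of $K_{30},K_{31},K_{32},K_{33}$, and that the traces coincide with rows 2--5, 7--8, and 10--11 of $\Lambda_{\text{bc}}$.

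Finally, I would record the regularity point that all of the above is justified for $\bar{\mbf{v}}\in L_2^{n_0}\times W_1^{n_1}\times W_2^{n_2}$, not merely for smooth functions: the mixed-derivative Sobolev regularity guarantees absolute continuity along lines, so the pointwise corner and edge evaluations appearing in $\Lambda_{\text{bc}}$ (of $\bar{\mbf{v}}_1,\bar{\mbf{v}}_2$ and their derivatives) are well-defined as traces and the FTC applies. Equivalently, one proves the identity for smooth $\bar{\mbf{v}}$ and extends by density in the $W_k$ norm, since both sides are continuous in that norm. Assembling the three blocks then gives $\bar{\mbf{v}}=\mcl{K}_1\Lambda_{\text{bc}}\bar{\mbf{v}}+\mcl{K}_2\mbf{v}$ with $\mcl{K}_1=\mcl{P}[K_1]$ and $\mcl{K}_2=\mcl{P}[K_2]$, as claimed.
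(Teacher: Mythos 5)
Your proof is correct: the blockwise reduction via the diagonal structure of $\mscr{D}$, the iterated fundamental theorem of calculus for $\bar{\mbf{v}}_1$, and the twice-applied Taylor-with-integral-remainder (Cauchy repeated integration) expansion for $\bar{\mbf{v}}_2$ reproduce exactly the kernels $K_{30}$, $K_{31}$, $K_{32}$, $K_{33}$ of Eqn.~\eqref{eq:K_mat}, and your density/trace argument correctly addresses the regularity needed for the corner and edge evaluations in $\Lambda_{\text{bc}}$. Note that the paper itself gives no proof of Lemma~\ref{lem:vhat_to_v} beyond citing~\cite{jagt2021PIEArxiv}, but the derivation there is this same componentwise FTC argument, so your attempt matches the intended proof rather than offering a genuinely different route.
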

	
	\begin{proof}
		A proof can be found in~\cite{jagt2021PIEArxiv}.
	\end{proof}

	\begin{cor}\label{cor:vhat_to_BC}
		Let $\mbf{v}\in L_2^{n_0}\!\times\! W_1^{n_1}\!\times\! W_2^{n_2}$ and let $\Lambda_{\text{bf}}$ be as defined in Eqn.~\eqref{eq:Lambda_bf}. Then if PI operators  $\mcl{H}_1=\mcl{P}[H_1]\in\Pi_{011}^{\enn{f}\times\enn{b}}$ and $\mcl{H}_2=\mcl{P}[H_2]\in\Pi_{011\leftarrow 2D}^{\enn{f}\times n_v}$ with $\enn{f}=\{4n_1+16n_2,2n_1+4n_2\}$, where $H_1$ and $H_2$ are as defined in Eqn.~\eqref{eq:H_mat} in Figure~\ref{fig:H_K_matrices}, then
		\[
		\Lambda_{\text{bf}} \bar{\mbf{v}} = \mcl{H}_1 \Lambda_{\text{bc}} \bar{\mbf{v}}+\mcl{H}_2 \mbf{v}
		\]
		where $\mbf{v} = \mscr{D} \bar{\mbf{v}}$, and where $\Lambda_{\text{bc}}$ is as defined in Eqn.~\eqref{eq:Lambda_bc}.
	\end{cor}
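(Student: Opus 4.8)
The plan is to read the desired identity off the reconstruction formula of Lemma~\ref{lem:vhat_to_v} by applying the full boundary-extraction operator $\Lambda_{\text{bf}}$ to both sides. Since $\bar{\mbf{v}}\in L_2^{n_0}\times W_1^{n_1}\times W_2^{n_2}$, that lemma yields
\[ \bar{\mbf{v}}=\mcl{K}_1\Lambda_{\text{bc}}\bar{\mbf{v}}+\mcl{K}_2\mbf{v},\qquad \mbf{v}=\mscr{D}\bar{\mbf{v}}, \]
and each of the two terms on the right is individually a double antiderivative of an $L_2$ function multiplied by polynomial weights, hence smooth enough that every corner value, edge trace, and mixed partial prescribed by $\Lambda_{\text{bf}}$ in Eqn.~\eqref{eq:Lambda_bf} is well-defined. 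Applying $\Lambda_{\text{bf}}$ and distributing over the sum gives
\[ \Lambda_{\text{bf}}\bar{\mbf{v}}=(\Lambda_{\text{bf}}\mcl{K}_1)\Lambda_{\text{bc}}\bar{\mbf{v}}+(\Lambda_{\text{bf}}\mcl{K}_2)\mbf{v}, \]
so it suffices to prove the two operator identities $\Lambda_{\text{bf}}\mcl{K}_1=\mcl{H}_1$ and $\Lambda_{\text{bf}}\mcl{K}_2=\mcl{H}_2$, with $\mcl{H}_1,\mcl{H}_2$ the $011$- and $011\leftarrow 2D$-PI operators built from the tabulated parameters~\eqref{eq:H_mat}.

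I would first verify $\Lambda_{\text{bf}}\mcl{K}_2=\mcl{H}_2$, which is the conceptually cleaner of the two. Writing $\mcl{K}_2=\mcl{P}[K_2]$ with $K_2=\smallbmat{T_{00}&0&0\\0&K_{33}&0\\0&0&0}$, its action on $\mbf{v}$ is the multiplier $T_{00}\mbf{v}$ on the $n_0$-block plus the iterated integral $\int_a^x\int_c^y K_{33}(x,y,\theta,\nu)\mbf{v}(\theta,\nu)\,d\nu\,d\theta$. Each row of $\Lambda_{\text{bf}}$ is a Dirac trace $\Delta_x^a,\Delta_x^b,\Delta_y^c,\Delta_y^d$ composed with some $\partial_x^i\partial_y^j$, and I would push these through the iterated integral by the Leibniz rule: differentiating collapses an outer integral (for instance $\partial_x\int_a^x\int_c^y(\cdot)=\int_c^y(\cdot)|_{\theta=x}$, with the polynomial factor $(x-\theta)$ or $(y-\nu)$ of $K_{33}$ killing the boundary term until it has been differentiated away), a trace at a lower limit ($x=a$ or $y=c$) then annihilates the remaining integral, while a trace at an upper limit ($x=b$ or $y=d$) leaves a single $1D$ integral against the kernel specialized to that edge. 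Carrying this out reproduces exactly the $L_2$-multiplier and $1D$-integral entries of $H_2$, the polynomial weights $(b-x)$, $(d-y)$ and $(d-y)(b-x)$ in $H_{03},H_{13},H_{23}$ arising precisely from specializing the factor $(x-\theta)(y-\nu)$ of $K_{33}$ at an upper-limit trace.

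The identity $\Lambda_{\text{bf}}\mcl{K}_1=\mcl{H}_1$ is established by the same Leibniz-plus-trace procedure, but is finite-dimensional and $1D$ in character, since $\mcl{K}_1=\mcl{P}[K_1]$ maps the core boundary data in $\text{Z}_{1}^{\enn{b}}$ through the kernels $K_{30},K_{31},K_{32}$; here the polynomial entries $(x-a),(y-c),(y-c)(x-a)$ of $K_{30}$ and the single-variable kernels $K_{31},K_{32}$ specialize to the constant and linear matrix blocks recorded in $H_{00},H_{01},H_{02},H_{11},H_{22},H_{13},H_{23}$. Property~\ref{enum:PIprop_dif} guarantees that each intermediate differentiation keeps us within the PI algebra, so that the composite $\Lambda_{\text{bf}}\mcl{K}_i$ is again a genuine PI operator of the advertised class. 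I expect the main obstacle to be not any single computation but the bookkeeping: $\Lambda_{\text{bf}}$ produces $4n_1+16n_2$ corner values together with $2n_1+4n_2$ edge traces on each of $\Omega_{a}^{b}$ and $\Omega_{c}^{d}$, and for every one of these I must track which derivatives survive, which integrals collapse under a lower-limit trace, and which polynomial weight is generated by an upper-limit trace, then match the result against the corresponding block of the large matrices in Figure~\ref{fig:H_K_matrices}. I would control this by organizing the verification around the three groups $\Lambda_1,\Lambda_2,\Lambda_3$ of Eqn.~\eqref{eq:Lambda_bf}, within each of which the differentiation orders and the surviving integration directions are uniform.
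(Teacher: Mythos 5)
Your proposal is correct and takes the route the paper intends: the paper offers no proof of this corollary at all (it is presented as an immediate consequence of Lemma~\ref{lem:vhat_to_v}, whose own proof is deferred to~\cite{jagt2021PIEArxiv}), and your argument---applying $\Lambda_{\text{bf}}$ to the reconstruction identity $\bar{\mbf{v}}=\mcl{K}_1\Lambda_{\text{bc}}\bar{\mbf{v}}+\mcl{K}_2\mbf{v}$ and verifying the operator identities $\Lambda_{\text{bf}}\mcl{K}_1=\mcl{H}_1$ and $\Lambda_{\text{bf}}\mcl{K}_2=\mcl{H}_2$ entrywise via the Leibniz rule, with lower-limit traces annihilating the integrals and upper-limit traces leaving weighted integrals---is precisely the computation the paper implies, and your spot checks of the weights $(b-x)$, $(d-y)$, $(d-y)(b-x)$ arising from $K_{33}$ match the tabulated parameters. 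One bookkeeping slip to correct: in your final paragraph you list $H_{13}$ and $H_{23}$ among the blocks produced by $\Lambda_{\text{bf}}\mcl{K}_1$, but these are blocks of $H_2$ and arise from the kernel $K_{33}$ of $\mcl{K}_2$ (exactly as you state, correctly, in the preceding paragraph); the blocks of $H_1$ are only $H_{00}$, $H_{01}$, $H_{02}$, $H_{11}$, $H_{22}$.
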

	
	Using these results, we can express $\bar{\mbf{v}}\in X_{w}$ directly in terms of $\mscr{D}\bar{\mbf{v}}\in L_2^{n_v}$ and the input signal $w$, as shown in the following theorem. We provide an outline of the proof of this result here, referring to Appendix~\ref{sec:appx_Tmap} for a full proof.
	
	\begin{figure*}[!ht]
		\hrulefill\\
		\footnotesize
		Define \\[-5.0ex]
		\begin{flalign}\label{eq:Tmat}
		&\enspace T:=\bmat{T_{00}&0&0\\0&T_{11}&T_{12}\\0&T_{21}&T_{22}}\in\mcl{N}^{n_v\times n_v}_{2D}
		\hspace*{1.0cm}\text{and}\hspace*{1.0cm}
		\begin{array}{l}
		T_{1}(x,y):=-K_{30}(x,y)Q_{0} - \int_{a}^{x}K_{31}^{1}(x,y,\theta)Q_{1}(\theta)d\theta\\ 
		\hspace*{2.0cm}- \int_{c}^{y}K_{32}^{1}(x,y,\nu)Q_{2}(\nu)d\nu,
		\end{array}	\\[-2.75ex]
		\intertext{where}\nonumber\\[-6.2ex]
		&\mat{\begin{array}{l}
			T_{11}(x,y,\theta,\nu) = K_{33}(x,y,\theta,\nu)
			+ T_{21}(x,y,\theta,\nu) + T_{12}(x,y,\theta,\nu)
			-T_{22}(x,y,\theta,\nu), \nonumber\\
			T_{21}(x,y,\theta,\nu) = - K_{32}(x,y,\nu)G_{23}^{0}(\theta,\nu) + T_{22}(x,y,\theta,\nu), \nonumber\\
			T_{12}(x,y,\theta,\nu) = - K_{31}^{1}(x,y,\theta)G_{13}^{0}(\theta,\nu) + T_{22}(x,y,\theta,\nu), \nonumber\\
			T_{22}(x,y,\theta,\nu) = -K_{30}(x,y)G_{03}(\theta,\nu)
			-\int_{a}^{x}K_{31}(x,y,\eta)G_{13}^{1}(\eta,\nu,\theta)d\eta
			-\int_{c}^{y}K_{32}(x,y,\mu)G_{23}^{1}(\theta,\mu,\nu)d\mu, 
			\end{array}	&
			T_{00}=\bmat{I_{n_0}&0&0\\0&0&0\\0&0&0}},\nonumber\\[-1.75ex]
		\intertext{
			where the functions $K_{ij}$ are the parameters defining the operators $\mcl{K}_1$ and $\mcl{K}_2$ in Lem.~\ref{lem:vhat_to_v}, defined as in Eqn.~\eqref{eq:K_mat} in Fig.~\ref{fig:H_K_matrices}, and}\nonumber\\[-6.4ex]
		&\enspace G_{0}(x,y)=\hat{R}_{00}F_{0}(x,y) + \hat{R}_{01}(x)F_{1}^{0}(x,y) + \int_{a}^{b}\hat{R}_{01}(\theta)F_{1}^{1}(\theta,y,x)d\theta
		+ \hat{R}_{02}(y)F_{2}^{0}(x,y) + \int_{c}^{d}\hat{R}_{02}(\nu)F_{2}^{1}(x,\nu,y),    \nonumber\\[-0.25ex]
		&
		\begin{array}{l}
		G_{1}^{0}(x,y)=\hat{R}_{11}^{0}(x)F_{1}^{0}(x,y),
		\\
		G_{1}^{1}(x,y,\theta)=\hat{R}_{10}(x)F_{0}(\theta,y) + \hat{R}_{11}^{0}(x)F_{1}^{1}(x,y,\theta) 
		\\
		\quad+ \hat{R}_{11}^{1}(x,\theta)F_{1}^{0}(\theta,y)  
		+\int_{a}^{b}\hat{R}_{11}^{1}(x,\eta)F_{1}^{1}(\eta,y,\theta)d\eta
		\\
		\qquad+\hat{R}_{12}(x,y)F_{2}^{0}(\theta,y) + \int_{c}^{d}\hat{R}_{12}(x,\nu)F_{2}^{1}(\theta,\nu,y)d\nu,
		\end{array}
		\hspace{2.5cm}
		\begin{array}{l}
		G_{2}^{0}(x,y)=\hat{R}_{22}^{0}(y)F_{2}^{0}(x,y), 
		\\
		G_{2}^{1}(x,y,\nu)=\hat{R}_{20}(y)F_{0}(x,\nu) + \hat{R}_{22}^{0}(y)F_{2}^{1}(x,y,\nu) 
		\\
		\quad+ \hat{R}_{22}^{1}(y,\nu)F_{2}^{0}(x,\nu)  
		+\int_{c}^{d}\hat{R}_{22}^{1}(y,\mu)F_{2}^{1}(x,\mu,\nu)d\mu
		\\
		\qquad+\hat{R}_{21}(x,y)F_{1}^{0}(x,\nu) + \int_{a}^{b}\hat{R}_{21}(\theta,y)F_{1}^{1}(\theta,\nu,x)d\theta,
		\end{array} \nonumber\\[-2.75ex]
		\intertext{and,}\nonumber\\[-6.35ex]
		&\begin{array}{l}
		Q_{0}=\hat{R}_{00}E_{1,0} + \int_{a}^{b}\hat{R}_{01}(\theta)E_{1,1}(\theta)d\theta \int_{c}^{d}\hat{R}_{02}(\nu)E_{1,2}(\nu),    \\
		Q_{1}(x)=\hat{R}_{10}(x)E_{1,0} + \hat{R}_{11}^{0}(x)E_{1,1}(x) + \int_{a}^{x}\hat{R}_{11}^{1}(x,\theta)E_{1,1}(\theta)d\theta + \int_{x}^{b}\hat{R}_{11}^{2}(x,\theta)E_{1,1}(\theta)d\theta + \int_{c}^{d}\hat{R}_{12}(x,\nu)E_{1,2}(\nu)d\nu,    \\
		Q_{2}(y)=\hat{R}_{20}(y)E_{1,0} + \int_{a}^{b}\hat{R}_{21}(y,\theta)E_{1,1}(\theta)d\theta + \hat{R}_{22}^{0}(y)E_{1,2}(y) + \int_{c}^{y}\hat{R}_{22}^{1}(y,\nu)E_{1,2}(\nu)d\nu + \int_{y}^{d}\hat{R}_{22}^{2}(y,\nu)E_{1,2}(\nu)d\nu,
		\end{array} \nonumber\\[-2.75ex]
		\intertext{with}\nonumber\\[-6.35ex]
		&\begin{array}{l l}
		F_{0}(x,y)=E_{00}H_{03}(x,y) + E_{01}(x)H_{13}(x,y) + E_{02}(y)H_{23}(x,y),    \\
		F_{1}^{1}(x,y,\theta)=E_{10}(x)H_{03}(\theta,y) + E_{11}^{1}(x,\theta)H_{13}(\theta,y)
		+E_{12}(x,y)H_{23}(\theta,y),   & \quad
		F_{1}^{0}(x,y)=E_{11}^{0}(x)H_{13}(x,y), \\
		F_{2}^{1}(x,y,\nu)=E_{20}(y)H_{03}(x,\nu) + E_{22}^{1}(y,\nu)H_{23}(x,\nu)
		+E_{21}(x,y)H_{13}(x,\nu),   & \quad
		F_{2}^{0}(x,y)=E_{22}^{0}(y)H_{23}(x,y),
		\end{array} \nonumber\\[-2.0ex]
		\intertext{and where $\smallbmat{\hat{R}_{00}&\hat{R}_{01}&\hat{R}_{02}\\\hat{R}_{10}&\hat{R}_{11}&\hat{R}_{12}\\\hat{R}_{20}&\hat{R}_{21}&\hat{R}_{22}}=\mcl{L}_{\text{inv}}\left(\smallbmat{R_{00}&R_{01}&R_{02}\\R_{10}&R_{11}&R_{12}\\R_{20}&R_{21}&R_{22}}\right)\in\mcl{N}_{011}$ are the parameters associated to the inverse of $\mcl{P}\smallbmat{R_{00}&R_{01}&R_{02}\\R_{10}&R_{11}&R_{12}\\R_{20}&R_{21}&R_{22}}$, where $\mcl{L}_{\text{inv}}:\mcl{N}_{011}\rightarrow\mcl{N}_{011}$ is defined as in Equation~(37) in Appendix~II-A of~\cite{jagt2021PIEArxiv}, and}\nonumber\\[-5.1ex] &\enspace R_{11}=\{R_{11}^{0},R_{11}^{1},R_{11}^{1}\}\in\mcl{N}_{1D},\qquad \text{and}\qquad R_{22}=\{R_{22}^{0},R_{22}^{1},R_{22}^{1}\}\in\mcl{N}_{1D},\qquad \text{with}\nonumber\\[-0.25ex]
		&\begin{array}{l l}
		R_{00}=E_{00}H_{00} + \int_{a}^{b}E_{01}(x)H_{10}(x)dx + \int_{c}^{d}E_{02}(y)H_{20}(y)dy, \\
		R_{01}(x)=E_{00}H_{01}(x) + E_{01}(x)H_{11}(x),  &
		R_{02}(y)=E_{00}H_{02}(y) + E_{02}(y)H_{22}(y),  \\
		R_{10}(x)=E_{10}(x)H_{00}, &
		R_{20}(y)=E_{20}(y)H_{00}, \\
		R_{11}^{0}(x)=E_{11}^{0}H_{11},   &
		R_{22}^{0}(y)=E_{22}^{0}H_{22},   \\
		R_{11}^{1}(x,\theta)= E_{10}(x)H_{01}(\theta) + E_{11}^{1}(x,\theta)H_{11}, &
		R_{22}^{1}(y,\nu)= E_{20}(y)H_{02}(\nu) + E_{22}^{1}(y,\nu)H_{22}, \\
		R_{12}(x,y)=E_{10}(x)H_{02}(y) + E_{12}(x,y)H_{22}(y), &
		R_{21}(x,y)=E_{20}(y)H_{01}(x) + E_{21}(x,y)H_{11}(x),
		\end{array} \nonumber\\[-4.0ex]
		\ \nonumber
		\end{flalign}
		where the functions $H_{ij}$ are the parameters defining the operators $\mcl{H}_1$ and $\mcl{H}_2$ in Cor.~\ref{cor:vhat_to_BC}, defined as in Eqn.~\eqref{eq:H_mat} in Fig.~\ref{fig:H_K_matrices}.
		\vspace*{-0.1cm}
		
		\hrulefill
		\vspace*{-0.25cm}
		\caption{Parameters $T_0$ and $T_1$ defining the PI operators $\mcl{T}_0=\mcl{P}[T_0]$ and $\mcl{T}_1=\text{M}[T_1]$ mapping the fundamental state back to the PDE state in Theorem~\ref{thm:Tmap}}
		\vspace*{-0.50cm}
		\label{fig:Tmap_matrices}
	\end{figure*}
	
	\begin{thm}\label{thm:Tmap}
		Let
 		$E_0=\smallbmat{E_{00}&E_{01}&E_{02}\\E_{10}&E_{11}&E_{12}\\E_{20}&E_{21}&E_{22}}\in\mcl{N}_{011}^{\enn{b}\times\enn{f}}$ and $E_1=\smallbmat{E_{1,0}\\E_{1,1}\\E_{1,2}}\in\mcl{N}_{011\leftarrow 2D}^{\enn{b}\times n_w}$ 
 		with $E_{jj}:=\{E_{jj}^{0},E_{jj}^{1},E_{jj}^{1}\}\in\mcl{N}_{1D}$ for $j\in\{1,2\}$
		be given, and $\mcl{E}_0\mcl{H}_1$ be invertible, where $\mcl{E}_0:=\mcl{P}[E_0]\in\Pi_{011}^{\enn{b}\times\enn{f}}$ and $\mcl{H}_1\in\Pi_{011}^{\enn{f}\times\enn{b}}$ is as in Cor.~\ref{cor:vhat_to_BC}. Let $w$ be a given input signal, with associated set $X_{w}$ as defined in Eqn.~\eqref{eq:Xset1}. Then, if $\mcl{T}_0=\mcl{P}[T_0]\in\Pi_{2D}^{n_v\times n_v}$ and $\mcl{T}_1=\text{M}[T_1]\in\Pi_{2D\leftarrow 011}^{n_v\times \{n_w,0\}}$, where $T_0\in\mcl{N}_{2D}^{n_v\times n_v}$ and $T_1\in L_2^{n_v\times n_w}[\Omega_{ac}^{bd}]$ are as defined in Eqn.~\eqref{eq:Tmat} in Fig.~\ref{fig:Tmap_matrices}, then for any $\bar{\mbf{v}}\in X_{w}$ and $\mbf{v} \in L^{n_v}_2$, \\[-2.75ex]
		\begin{align*}
		\bar{\mbf{v}} &= \mcl{T}_0\mscr{D}\bar{\mbf{v}} + \mcl{T}_1 w
		&	&\text{and}	&
		 \mbf{v} &= \mscr{D}\bbl[\mcl{T}_0\mbf{v}+\mcl{T}_1 w\bbr], \\[-4.25ex]
		\end{align*}
		where $\mscr{D}=\smallbmat{I_{n_0}&&\\&\partial_x\partial_y&\\&&\partial_x^2\partial_y^2}$.
	\end{thm}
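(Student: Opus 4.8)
The plan is to eliminate the unknown boundary data $\Lambda_{\text{bc}}\bar{\mbf{v}}$ from the representation supplied by Lemma~\ref{lem:vhat_to_v}, using the boundary conditions that define $X_w$, and then to identify the resulting operator composition with the explicit parameters $T_0$ and $T_1$ of Fig.~\ref{fig:Tmap_matrices}. Throughout I write $\mbf{v}=\mscr{D}\bar{\mbf{v}}$ and $\mcl{E}_0=\mcl{P}[E_0]$.

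First I would start from Lemma~\ref{lem:vhat_to_v}, namely $\bar{\mbf{v}}=\mcl{K}_1\Lambda_{\text{bc}}\bar{\mbf{v}}+\mcl{K}_2\mbf{v}$, in which the only term not yet expressed through $\mbf{v}$ and $w$ is the core boundary data $\Lambda_{\text{bc}}\bar{\mbf{v}}$. To resolve it, I would combine the boundary condition $\mcl{E}_0\Lambda_{\text{bf}}\bar{\mbf{v}}=-\mcl{E}_1 w$, valid since $\bar{\mbf{v}}\in X_w$ (see Eqn.~\eqref{eq:BC_Operators}), with Corollary~\ref{cor:vhat_to_BC}, $\Lambda_{\text{bf}}\bar{\mbf{v}}=\mcl{H}_1\Lambda_{\text{bc}}\bar{\mbf{v}}+\mcl{H}_2\mbf{v}$. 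Substituting the latter into the former gives
\begin{align*}
\mcl{E}_0\mcl{H}_1\,\Lambda_{\text{bc}}\bar{\mbf{v}}=-\mcl{E}_0\mcl{H}_2\,\mbf{v}-\mcl{E}_1 w.
\end{align*}
By hypothesis $\mcl{E}_0\mcl{H}_1$ is invertible, and by Property~\ref{enum:PIprop_inv} its inverse is again a 011-PI operator, so I may solve $\Lambda_{\text{bc}}\bar{\mbf{v}}=-(\mcl{E}_0\mcl{H}_1)^{-1}[\mcl{E}_0\mcl{H}_2\,\mbf{v}+\mcl{E}_1 w]$. Substituting back into Lemma~\ref{lem:vhat_to_v} yields $\bar{\mbf{v}}=\mcl{T}_0\mbf{v}+\mcl{T}_1 w$ with
\begin{align*}
\mcl{T}_0:=\mcl{K}_2-\mcl{K}_1(\mcl{E}_0\mcl{H}_1)^{-1}\mcl{E}_0\mcl{H}_2,\qquad
\mcl{T}_1:=-\mcl{K}_1(\mcl{E}_0\mcl{H}_1)^{-1}\mcl{E}_1,
\end{align*}
which, recalling $\mbf{v}=\mscr{D}\bar{\mbf{v}}$, is exactly the first claimed identity.

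It then remains to check that these abstract compositions coincide with $\mcl{P}[T_0]$ and $\text{M}[T_1]$ for the closed-form kernels of Fig.~\ref{fig:Tmap_matrices}. By Properties~\ref{enum:PIprop_sum} and~\ref{enum:PIprop_prod} every operation above stays within the PI algebra, with parameters produced by $\mcl{L}_+$ and $\mcl{L}_\times$, so I would evaluate them in the stages mirrored by the figure: the product $\mcl{E}_0\mcl{H}_1$ yields the parameters $R_{ij}$; applying $\mcl{L}_{\text{inv}}$ (Property~\ref{enum:PIprop_inv}) produces $\hat{R}_{ij}$; the product $\mcl{E}_0\mcl{H}_2$ gives the $F$-parameters; composing $\mcl{P}[\hat{R}]$ with the latter gives the $G$-parameters, and composing with $\mcl{E}_1$ gives the $Q$-parameters; and finally left-multiplication by $\mcl{K}_1$ together with the additive $\mcl{K}_2$ term reproduces precisely $T_0$ and $T_1$. \textbf{This last identification is the main labor of the proof:} although each step is a mechanical application of the composition and inversion formulae for 011- and 2D-PI operators, faithfully tracking the many kernel components and their integration limits through the product and inverse maps is intricate, and this is the bulk of what is deferred to Appendix~\ref{sec:appx_Tmap}.

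Finally, for the second identity $\mbf{v}=\mscr{D}[\mcl{T}_0\mbf{v}+\mcl{T}_1 w]$, I would show $\mscr{D}\mcl{T}_0=I$ and $\mscr{D}\mcl{T}_1=0$ on $L_2^{n_v}$. Both reduce to $\mscr{D}\mcl{K}_2=I$ and $\mscr{D}\mcl{K}_1=0$: the first is the fundamental-theorem-of-calculus action of $\mscr{D}=\text{diag}(I,\partial_x\partial_y,\partial_x^2\partial_y^2)$ on the integral kernels of $\mcl{K}_2$ in Eqn.~\eqref{eq:K_mat}, while the second holds because the kernels of $\mcl{K}_1$ are polynomials in $(x,y)$ of degree too low to survive $\mscr{D}$; both are instances of Property~\ref{enum:PIprop_dif} applied to the composition of $\mscr{D}$ with a 2D-PI operator. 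Granting these, $\mscr{D}\mcl{T}_0=\mscr{D}\mcl{K}_2-(\mscr{D}\mcl{K}_1)(\mcl{E}_0\mcl{H}_1)^{-1}\mcl{E}_0\mcl{H}_2=I$ and $\mscr{D}\mcl{T}_1=-(\mscr{D}\mcl{K}_1)(\mcl{E}_0\mcl{H}_1)^{-1}\mcl{E}_1=0$, so $\mscr{D}[\mcl{T}_0\mbf{v}+\mcl{T}_1 w]=\mbf{v}$ for every $\mbf{v}$ and $w$, completing the proof.
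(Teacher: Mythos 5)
Your proposal is correct and follows essentially the same route as the paper's proof in Appendix~\ref{sec:appx_Tmap}: the first identity is obtained exactly as there (substitute Corollary~\ref{cor:vhat_to_BC} into the boundary conditions, use invertibility of $\mcl{E}_0\mcl{H}_1$ to solve for $\Lambda_{\text{bc}}\bar{\mbf{v}}$, and substitute back into Lemma~\ref{lem:vhat_to_v}), with the identification of $\mcl{K}_2-\mcl{K}_1(\mcl{E}_0\mcl{H}_1)^{-1}\mcl{E}_0\mcl{H}_2$ and $-\mcl{K}_1(\mcl{E}_0\mcl{H}_1)^{-1}\mcl{E}_1$ with the kernels $T_0$ and $T_1$ being precisely the bookkeeping the appendix carries out through the $R$, $F$, $G$, $Q$ parameters. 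The only organizational difference is in the second identity: you derive $\mscr{D}\mcl{T}_0=I$ and $\mscr{D}\mcl{T}_1=0$ jointly from $\mscr{D}\mcl{K}_2=I$ and $\mscr{D}\mcl{K}_1=0$, whereas the paper cites its earlier result $\mscr{D}\mcl{T}_0=\text{M}[I_{n_v}]$ (Thm.~32 of~\cite{jagt2021PIEArxiv}) and verifies $\mscr{D}\mcl{T}_1=0$ by an explicit Leibniz computation on $T_1$ --- the same underlying facts about the kernels $K_{30}$, $K_{31}^{1}$, $K_{32}^{1}$, $K_{33}$, so your factorization is a mild streamlining rather than a different argument.
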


	\paragraph*{Outline of proof}
	The result follows from application of Lemma~\ref{lem:vhat_to_v} and Corollary~\ref{cor:vhat_to_BC}, allowing us to express
	\begin{align*}
	 \bar{\mbf{v}}&=\mcl{K}_1\Lambda_{\text{bc}}\bar{\mbf{v}}+\mcl{K}_2\mbf{v},	 &
	 \Lambda_{\text{bf}}\bar{\mbf{v}}&=\mcl{H}_1\Lambda_{\text{bc}}\bar{\mbf{v}}+\mcl{H}_2\mbf{v}.
	\end{align*}
	Substituting the second result into the expression for the boundary conditions, $0=\mcl{E}_0\Lambda_{\text{bf}}\bar{\mbf{v}}+\mcl{E}_1 w$, it follows that
	\begin{align*}
	 \mcl{E}_0\mcl{H}_1\Lambda_{\text{bc}}\bar{\mbf{v}}
	 =-\mcl{E}_0\mcl{H}_2\mbf{v}-\mcl{E}_1 w,
	\end{align*}
	so that, since (by assumption) $\mcl{R}:=\mcl{E}_0\mcl{H}_1$ is invertible,
	\begin{align*}
	 \Lambda_{\text{bc}}\bar{\mbf{v}}=-\mcl{R}^{-1}\mcl{E}_0\mcl{H}_2\mbf{v}-\mcl{R}^{-1}\mcl{E}_1 w.
	\end{align*}
	Substituting this result into the expression for $\bar{\mbf{v}}$ from Lemma~\ref{lem:vhat_to_v}, it immediately follows that
	\begin{align*}
	\bar{\mbf{v}}&=[\mcl{K}_2-\mcl{K}_1\mcl{R}^{-1}\mcl{E}_0\mcl{H}_2]\mbf{v}-\mcl{K}_1\mcl{R}^{-1}\mcl{E}_1 w
	=\mcl{T}_0\mbf{v} + \mcl{T}_1 w.
	\end{align*}

	\subsection{PDE to PIE Conversion}\label{sec:subsec:ioPIE_Representation}
	
	Having constructed the PI operators $\mcl{T}_0,\mcl{T}_1$ mapping fundamental states $\mbf{v}\in L_2^{n_v}[\Omega_{ac}^{bd}]$ to PDE states $\bar{\mbf{v}}\in X_{w}$, we can now define an equivalent PIE representation of the standardized PDE. In particular, for given PI operators $\{\mcl{T}_0,\mcl{T}_1,\mcl{A},\mcl{B},\mcl{C},\mcl{D}\}$, we define the associated PIE as \\[-3.0ex]
	\begin{align}\label{eq:standard_PIE}
	\mcl{T}_1 \dot{w}(t)+\mcl{T}_0 \dot{\mbf{v}}(t)&= \mcl{A}\mbf{v}(t)+\mcl{B}w(t) \nonumber\\
	z(t)&=\mcl{C}\mbf{v}(t)+\mcl{D}w(t) \\[-3.5ex]	\ \nonumber
	\end{align}
	where $\mbf{v}(t)\in L_2^{n_v}$ at any time $t\geq 0$.
	
	\begin{defn}[Solution to the PIE]
		For a given input signal $w$ and given initial conditions $\mbf{v}_{\text{I}}\in L_2^{n_v}$, we say that $(\mbf{v},z)$ is a solution to the PIE defined by $\{\mcl{T}_0,\mcl{T}_1,\mcl{A},\mcl{B},\mcl{C},\mcl{D}\}$ if $\mbf{v}$ is Frech\'et differentiable, $\mbf{v}(0)=\mbf{v}_{\text{I}}$, and for all $t\geq0$, $(\mbf{v}(t),z(t))$ satisfies Eqn.~\eqref{eq:standard_PIE}.
	\end{defn}
	
	The following lemma shows that for any PDE of the form~\eqref{eq:standard_PDE} for which $\mcl{E}_0\mcl{H}_1$ in Theorem.~\ref{thm:Tmap} is invertible, there exists an equivalent PIE of the form~\eqref{eq:standard_PIE}.
	\begin{lem}\label{lem:PDE_to_PIE}
		Suppose $\mcl{T}_0,\mcl{T}_1$ are as defined in Thm.~\ref{thm:Tmap}. Let
		\begin{align*}
		 \mcl{A}&:=\bar{\mcl{A}}\circ\mcl{T}_0\in\Pi_{0112}^{\enn{v}\times \enn{v}},	&
		 \mcl{B}&:=\bar{\mcl{B}} + \bar{\mcl{A}}\circ\mcl{T}_1\in\Pi_{0112}^{\enn{v}\times \enn{w}},	\\
		 \mcl{C}&:=\bar{\mcl{C}}\circ\mcl{T}_0\in\Pi_{0112}^{\enn{z}\times \enn{v}},	&
		 \mcl{D}&:=\bar{\mcl{D}} + \bar{\mcl{C}}\circ\mcl{T}_1\in\Pi_{0112}^{\enn{z}\times \enn{w}},	\\[-3.5ex]
		\end{align*}
		where the operators $\{\bar{\mcl{A}},\bar{\mcl{B}},\bar{\mcl{C}},\bar{\mcl{D}}\}$ are parameterized by $\{A_{ij},B,C_{ij},D\}$, as in Eqn.~\eqref{eq:PDE_operators}, and where we define $\enn{v}:=\{0,0,n_v\}$, $\enn{w}:=\{n_w,0,0\}$ and $\enn{z}:=\{n_z,0,0\}$.
		Then, for a given input $w$ and initial values $\mbf{v}_{\text{I}}\in L_2^{n_v}$, $(\mbf{v},z)$ solves the PIE~\eqref{eq:standard_PIE} defined by $\{\mcl{T}_0,\mcl{T}_1,\mcl{A},\mcl{B},\mcl{C},\mcl{D}\}$ with initial conditions $\mbf{v}_{\text{I}}$ if and only if $(\bar{\mbf{v}},z)$ with $\bar{\mbf{v}}(t)=\mcl{T}_0\mbf{v}(t)+\mcl{T}_1 w(t)$ solves the PDE~\eqref{eq:standard_PDE} defined by $\{A_{ij},B,C_{ij},D,E_0,E_1\}$ with initial conditions $\bar{\mbf{v}}_{\text{I}}:=\mcl{T}_0\mbf{v}_{\text{I}}+\mcl{T}_1 w(0)$.
		
	\end{lem}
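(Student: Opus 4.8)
The plan is to leverage the bijective correspondence of Theorem~\ref{thm:Tmap} so that the boundary and continuity constraints defining $X_w$ are enforced automatically, reducing the equivalence of the two systems to elementary operator algebra. Throughout, I keep the two states linked by $\bar{\mbf{v}}(t)=\mcl{T}_0\mbf{v}(t)+\mcl{T}_1 w(t)$, and I establish the ``if and only if'' by showing that each of the three ingredients of a solution -- membership in the state space, the dynamics, and the output equation -- is equivalent for the two representations, after which the initial conditions are matched directly.

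First, I would extract from Theorem~\ref{thm:Tmap} the two facts that do the heavy lifting. Since $\mcl{E}_0\mcl{H}_1$ is invertible, the map $\mbf{v}\mapsto\mcl{T}_0\mbf{v}+\mcl{T}_1 w$ is a bijection from $L_2^{n_v}$ onto $X_w$ whose inverse is $\mscr{D}$; hence (i) $\bar{\mbf{v}}(t)\in X_{w(t)}$ holds for every $t$ exactly when $\mbf{v}(t)\in L_2^{n_v}$, and (ii) $\mscr{D}\bar{\mbf{v}}(t)=\mbf{v}(t)$. Fact (i) is precisely the assertion that the boundary conditions $\bar{\mcl{E}}_0\bar{\mbf{v}}+\mcl{E}_1 w=0$ and the Sobolev (continuity) constraints in~\eqref{eq:Xset1} are satisfied, so they never need to be checked by hand.

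Next I would differentiate the correspondence in time. Because $\mcl{T}_0$ and $\mcl{T}_1$ are bounded, time-invariant PI operators, differentiability of $\mbf{v}$ and $w$ yields $\dot{\bar{\mbf{v}}}(t)=\mcl{T}_0\dot{\mbf{v}}(t)+\mcl{T}_1\dot{w}(t)$. The algebraic core is then a direct substitution of the operator definitions: using $\mcl{A}=\bar{\mcl{A}}\circ\mcl{T}_0$ and $\mcl{B}=\bar{\mcl{B}}+\bar{\mcl{A}}\circ\mcl{T}_1$ together with linearity,
\begin{align*}
\mcl{A}\mbf{v}+\mcl{B}w=\bar{\mcl{A}}\bl(\mcl{T}_0\mbf{v}+\mcl{T}_1 w\br)+\bar{\mcl{B}}w=\bar{\mcl{A}}\bar{\mbf{v}}+\bar{\mcl{B}}w,
\end{align*}
and likewise $\mcl{C}\mbf{v}+\mcl{D}w=\bar{\mcl{C}}\bar{\mbf{v}}+\bar{\mcl{D}}w$ from $\mcl{C}=\bar{\mcl{C}}\circ\mcl{T}_0$ and $\mcl{D}=\bar{\mcl{D}}+\bar{\mcl{C}}\circ\mcl{T}_1$. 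Combining the derivative identity with the first display shows that the PIE dynamics $\mcl{T}_0\dot{\mbf{v}}+\mcl{T}_1\dot{w}=\mcl{A}\mbf{v}+\mcl{B}w$ are the PDE dynamics $\dot{\bar{\mbf{v}}}=\bar{\mcl{A}}\bar{\mbf{v}}+\bar{\mcl{B}}w$ rewritten, while the two output equations coincide term by term; evaluating at $t=0$ gives $\bar{\mbf{v}}(0)=\mcl{T}_0\mbf{v}_{\text{I}}+\mcl{T}_1 w(0)=\bar{\mbf{v}}_{\text{I}}$, matching the initial conditions. Reading these equivalences left-to-right proves that a PIE solution induces a PDE solution, and right-to-left proves the converse.

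The step I expect to be the main obstacle is the regularity bookkeeping in the reverse implication. In the forward direction, Fréchet differentiability of $\mbf{v}$ (and of $w$) transfers to $\bar{\mbf{v}}$ through the bounded operators $\mcl{T}_0,\mcl{T}_1$; but in the reverse direction one starts from a differentiable $\bar{\mbf{v}}$ and would like to conclude that $\mbf{v}=\mscr{D}\bar{\mbf{v}}$ is differentiable, and $\mscr{D}$ is unbounded, so this is not automatic. I would sidestep this by treating $\mbf{v}$ as the fundamental variable whose Fréchet differentiability is part of the hypothesis defining a PIE solution, so that in each direction only the equivalence of the dynamics and output equations for a fixed differentiable $\mbf{v}$ is needed -- which the algebra above already supplies. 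The remaining care is to confirm, via fact (i), that requiring $\bar{\mbf{v}}\in X_w$ imposes no spurious constraint on $\mbf{v}$, which is exactly the surjectivity half of the bijection guaranteed by the invertibility of $\mcl{E}_0\mcl{H}_1$.
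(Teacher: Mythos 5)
Your solution-equivalence argument is essentially the paper's own: both proofs rest on the two identities of Theorem~\ref{thm:Tmap}, the operator algebra $\mcl{A}\mbf{v}+\mcl{B}w=\bar{\mcl{A}}\bigl(\mcl{T}_0\mbf{v}+\mcl{T}_1 w\bigr)+\bar{\mcl{B}}w=\bar{\mcl{A}}\bar{\mbf{v}}+\bar{\mcl{B}}w$ (and likewise for the output), time-differentiation of $\bar{\mbf{v}}=\mcl{T}_0\mbf{v}+\mcl{T}_1 w$ through the bounded operators $\mcl{T}_0,\mcl{T}_1$, matching of initial conditions via $\mscr{D}\bar{\mbf{v}}_{\text{I}}=\mbf{v}_{\text{I}}$, and the observation that $\bar{\mbf{v}}(t)\in X_{w(t)}$ comes for free from the construction of $\mcl{T}_0,\mcl{T}_1$. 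Your explicit flagging of the regularity transfer (differentiability of $\mbf{v}$ versus $\bar{\mbf{v}}$) is, if anything, more candid than the paper, which silently writes $\mcl{T}_0\dot{\mbf{v}}(t)$ in the direction that starts from a PDE solution.

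There is, however, one genuine omission. The lemma does not only assert the solution equivalence; it asserts that $\mcl{A}:=\bar{\mcl{A}}\circ\mcl{T}_0$, $\mcl{B}:=\bar{\mcl{B}}+\bar{\mcl{A}}\circ\mcl{T}_1$, $\mcl{C}:=\bar{\mcl{C}}\circ\mcl{T}_0$, and $\mcl{D}:=\bar{\mcl{D}}+\bar{\mcl{C}}\circ\mcl{T}_1$ belong to the stated classes $\Pi_{0112}^{\enn{v}\times\enn{v}}$, etc., and the paper devotes the opening paragraph of its proof to exactly this point. It is not a bookkeeping triviality: by Eqn.~\eqref{eq:PDE_operators}, $\bar{\mcl{A}}$ and $\bar{\mcl{C}}$ contain the differential operators $\partial_x^i\partial_y^j$, and the composition of a differential operator with an arbitrary PI operator is \emph{not} a PI operator -- it is one only when the kernel parameters are sufficiently differentiable, which is what Property~\ref{enum:PIprop_dif} (Appendix~\ref{sec:appx_div_operator}) together with the specific structure of $T_0$ and $T_1$ established in the proof of Theorem~\ref{thm:Tmap} supplies, applied to $\partial_x^i\partial_y^j\,\text{M}[S_{i,j}]\,\mcl{T}_k$ for $k\in\{0,1\}$. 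Without this verification, Eqn.~\eqref{eq:standard_PIE} is a valid abstract evolution equation but not a PIE in the sense required downstream (in particular, Lemma~\ref{lem:KYP} and the LMI parameterization of Section~\ref{sec:Positive_PI_Params} could not be applied to it). You should add this step, or at least invoke the composition results that furnish it.
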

		
	\begin{proof}
		To see that the operators $\{\mcl{A},\mcl{B},\mcl{C},\mcl{D}\}$, we first note that the composition of any multiplier operator $\mbf{M}[R]$ with a PI operator is a PI operator. Noting that the sum and composition of PI operators can be expressed as PI operators as well, and studying Eqns.~\eqref{eq:PDE_operators} defining the operators $\{\bar{\mcl{A}},\bar{\mcl{B}},\bar{\mcl{C}},\bar{\mcl{D}}\}$, it suffices then to show then that the composition $\partial_x^i\partial_y^{j}\ \text{M}[S_{i,j}]\ \mcl{T}_{k}$ is a PI operator for each $k\in\{0,1\}$ and $k,\ell\in\{1,2\}$. For this, we refer to the second part of the proof of Thm.~\ref{thm:Tmap} in Appx.~\ref{sec:appx_Tmap}, in which it is shown that the composition of differential operator $\partial_x^i\partial_y^{j}$ with PI operator $\text{M}[S_{i,j}]\mcl{T}_{k}$ may indeed be expressed as a PI operator for each $k\in\{0,1\}$ and $i,j\in\{1,2\}$.
		
		Suppose $(\mbf{v},z)$ is such that $(\bar{\mbf{v}},z)$ is a solution to the PDE defined by $\{A_{ij},B,C_{ij},D,E_0,E_1\}$, where $\bar{\mbf{v}}(t)=\mcl{T}_0\mbf{v}(t)+\mcl{T}_1 w(t)$, and with initial condition $\bar{\mbf{v}}_{\text{I}}:=\mcl{T}_0\mbf{v}_{\text{I}}+\mcl{T}_1 w(0)$. Then, by Thm.~\ref{thm:Tmap},
		\begin{align*}
		\mbf{v}(0)=\mscr{D}\bar{\mbf{v}}(0)	
		=\mscr{D}\bbl[\mcl{T}_0\mbf{v}_{\text{I}}+\mcl{T}_1 w(0)\bbr]
		=\mbf{v}_{\text{I}},
		\end{align*}
		and, invoking Eqn.~\eqref{eq:standard_PDE} for the PDE defined by $\{A_{ij},B,C_{ij},D\}$,
		\begin{align*}
		\bmat{\mcl{T}_1 \dot{w}(t)+\mcl{T}_0 \dot{\mbf{v}}(t)\\z(t)}
		&=\bmat{\dot{\bar{\mbf{v}}}(t)\\z(t)}	\\
		&=\bmat{\bar{\mcl{A}}&\bar{\mcl{B}}\\ \bar{\mcl{C}}&\bar{\mcl{D}}} \bmat{\bar{\mbf{v}}(t)\\w(t)}\\
		&=\bmat{\bar{\mcl{A}}&\bar{\mcl{B}}\\ \bar{\mcl{C}}&\bar{\mcl{D}}}
		\bmat{\mcl{T}_0\mbf{v}(t)+\mcl{T}_1 w(t)\\ w(t)}	\\
		&=\bmat{\bar{\mcl{A}}\mcl{T}_0&\bar{\mcl{B}}+\bar{\mcl{A}}\mcl{T}_1 \\ \bar{\mcl{C}}\mcl{T}_0&\bar{\mcl{D}}+\bar{\mcl{C}}\mcl{T}_1}\bmat{\mbf{v}(t)\\w(t)}	\\
		&=\bmat{\mcl{A}&\mcl{B} \\ \mcl{C}&\mcl{D}}\bmat{\mbf{v}(t)\\w(t)},
		\end{align*}
		proving that $(\mbf{v},z)$ satisfies the PIE~\eqref{eq:standard_PIE}.
		
		Conversely, suppose now that $(\mbf{v},z)$ is a solution to the PIE defined by $\{\mcl{T},\mcl{A},\mcl{B},\mcl{C},\mcl{D}\}$, and with initial condition $\mbf{v}_{\text{I}}$, and let $\bar{\mbf{v}}(t)=\mcl{T}_0\mbf{v}(t)+\mcl{T}_1 w(t)$ and $\bar{\mbf{v}}_{\text{I}}=\mcl{T}_0\mbf{v}_{\text{I}}+\mcl{T}_1 w(0)$. Then, by Thm.~\ref{thm:Tmap}
		\begin{align*}
		\bar{\mbf{v}}(0)&=\mcl{T}_0\mbf{v}(0)+\mcl{T}_1 w(0)	
		=\mcl{T}_0\mbf{v}_{\text{I}}+\mcl{T}_1 w(0)
		=\bar{\mbf{v}}_{\text{I}},
		\end{align*}
		and, invoking Eqn.~\eqref{eq:standard_PIE} for the PIE defined by $\{\mcl{T},\mcl{A},\mcl{B}_j,\mcl{C}_i,\mcl{D}_{ij}\}$,
		\begin{align*}
		\bmat{\dot{\bar{\mbf{v}}}(t)\\z(t)}&=\bmat{\mcl{T}_1 \dot{w}(t)+\mcl{T}_0 \dot{\mbf{v}}(t)\\z(t)}	\\
		&=\bmat{\mcl{A}&\mcl{B} \\ \mcl{C}&\mcl{D}}\bmat{\mbf{v}(t)\\w(t)}	\\
		&=\bmat{\bar{\mcl{A}}\mcl{T}_0&\bar{\mcl{B}}+\bar{\mcl{A}}\mcl{T}_1 \\ \bar{\mcl{C}}\mcl{T}_0&\bar{\mcl{D}}+\bar{\mcl{C}}_1\mcl{T}_1}\bmat{\mbf{v}(t)\\w(t)}	\\
		&=\bmat{\bar{\mcl{A}}&\bar{\mcl{B}}\\ \bar{\mcl{C}}&\bar{\mcl{D}}}
		\bmat{\mcl{T}_0\mbf{v}(t)+\mcl{T}_1 w(t)\\ w(t)}	
		=\bmat{\bar{\mcl{A}}&\bar{\mcl{B}}\\ \bar{\mcl{C}}&\bar{\mcl{D}}}
		\bmat{\bar{\mbf{v}}(t)\\ w(t)}.
		\end{align*}
		Since also $\bar{\mbf{v}}(t)$ satisfies the boundary conditions $0=\mcl{E}_0\Lambda_{\text{bc}}\bar{\mbf{v}}(t)+\mcl{E}_1w(t)$ at each $t\geq 0$, we conclude that $(\bar{\mbf{v}},z,q)$ is a solution to the PDE~\eqref{eq:standard_PDE}.
		
	\end{proof}

	\section{Parameterizing Positive PI Operators}\label{sec:Positive_PI_Params}
	
	\begin{figure*}
		\hrulefill
		\footnotesize
		\begin{flalign}\label{eq:L_adj}
			&\quad \mcl{L}_{\text{adj}}(G)=\hat{G}=\smallbmat{\hat{B}_{00}&\hat{B}_{01}&\hat{B}_{02}&\hat{C}_{0}\\\hat{B}_{10}&\hat{B}_{11}&\hat{B}_{12}&\hat{C}_{1}\\\hat{B}_{20}&\hat{B}_{21}&\hat{B}_{22}&\hat{C}_{2}\\\hat{D}_{0}&\hat{D}_{1}&\hat{D}_{2}&\hat{N}} \in \mcl N_{0112}^{\enn{u}\times\enn{v}}
			& &\text{with} &
			\begin{array}{ll}
				\hat{B}_{jj}=\{\hat{B}_{jj,0},\hat{B}_{jj,1},\hat{B}_{jj,2}\} &\in\mcl{N}_{011}^{m_1\times n_1}\\
				\hat{C}_{j}=\{\hat{C}_{j,0},\hat{C}_{j,1},\hat{C}_{j,2}\} &\in\mcl{N}_{2D\rightarrow 1D}^{m_1\times n_2}\\
				\hat{D}_{j}=\{\hat{D}_{j,0},\hat{D}_{j,1},\hat{C}_{D,2}\} &\in\mcl{N}_{1D\rightarrow 2D}^{m_2\times n_1}\\
				\hat{N}=\smallbmat{\hat{N}_{00}&\hat{N}_{01}&\hat{N}_{02}\\\hat{N}_{10}&\hat{N}_{11}&\hat{N}_{12}\\\hat{N}_{20}&\hat{N}_{21}&\hat{N}_{22}} &\in\mcl{N}_{2D}^{m_2\times n_2}, 
			\end{array} & &
		\end{flalign}
		where $\enn{u}=\{m_0,m_1,m_2\}$ and $\enn{v}=\{n_0,n_1,n_2\}$, and where
		\begin{flalign}\label{eq:adjoint_params}
			\quad&\bmat{\hat{B}_{00}&\hat{B}_{01}(x)&\hat{B}_{02}(y)&\hat{C}_{0}(x,y)\\
				\hat{B}_{10}(x)& &\hat{B}_{12}(x,y)& \\
				\hat{B}_{20}(y)&\hat{B}_{21}(x,y)& & \\
				\hat{D}_{0}(x,y)& & & }
			=\bmat{B_{00}^T&B_{10}^T(x)&B_{20}^T(y)&D_{0}^T(x,y) \\
				B_{01}^T(x)& &B_{21}^T(x,y)&  \\
				B_{02}^T(y)&B_{12}^T(x,y)& &  \\
				C_{0}^T(x,y)& & & },	\nonumber\\
			&\{\hat{B}_{11,0}(x),\hat{B}_{11,1}(x,\theta),\hat{B}_{11,2}(x,\theta)\}
			=\{B_{11,0}^T(x),B_{11,2}^T(\theta,x),B_{11,1}^T(\theta,x)\},	\nonumber\\
			&\{\hat{B}_{22,0}(y),\hat{B}_{22,1}(y,\nu),\hat{B}_{22,2}(y,\nu)\}
			=\{B_{22,0}^T(y),B_{22,2}^T(\nu,y),B_{22,1}^T(\nu,y)\},	\nonumber\\
			&\{\hat{C}_{1,0}(x,y),\hat{C}_{1,1}(x,y,\theta),\hat{C}_{1,2}(x,y,\theta)\}
			=\{D_{1,0}^T(x,y),D_{1,2}^T(\theta,y,x),D_{1,1}^T(\theta,y,x)\},	\nonumber\\
			&\{\hat{C}_{2,0}(x,y),\hat{C}_{2,1}(x,y,\nu),\hat{C}_{2,2}(x,y,\nu)\}
			=\{D_{2,0}^T(x,y),D_{2,2}^T(x,\nu,y),D_{2,1}^T(x,\nu,y)\},	\nonumber\\
			&\{\hat{D}_{1,0}(x,y),\hat{D}_{1,1}(x,y,\theta),\hat{D}_{1,2}(x,y,\theta)\}
			=\{C_{1,0}^T(x,y),C_{1,2}^T(\theta,y,x),C_{1,1}^T(\theta,y,x)\},	\nonumber\\
			&\{\hat{D}_{2,0}(x,y),\hat{D}_{2,1}(x,y,\nu),\hat{D}_{2,2}(x,y,\nu)\}
			=\{C_{2,0}^T(x,y),C_{2,2}^T(x,\nu,y),C_{2,1}^T(x,\nu,y)\},	\nonumber\\
			&\bmat{\hat{N}_{00}(x,y)&\hat{N}_{01}(x,y,\nu)&\hat{N}_{02}(x,y,\nu)\\ \hat{N}_{10}(x,y,\theta)&\hat{N}_{11}(x,y,\theta,\nu)&\hat{N}_{12}(x,y,\theta,\nu)\\ \hat{N}_{20}(x,y,\theta)&\hat{N}_{21}(x,y,\theta,\nu)&\hat{N}_{22}(x,y,\theta,\nu)}=\bmat{N_{00}^T(x,y)&N^T_{02}(x,\nu,y)&N^T_{01}(x,\nu,y)\\ N^T_{20}(\theta,y,x)&N^T_{22}(\theta,\nu,x,y)&N^T_{21}(\theta,\nu,x,y)\\ N^T_{10}(\theta,y,x)&N^T_{12}(\theta,\nu,x,y)&N^T_{11}(\theta,\nu,x,y)} &
		\end{flalign}
		
		\vspace*{-0.1cm}
		
		\hrulefill
		\vspace*{-0.25cm}
		\caption{Parameters $\hat{G}$ defining the adjoint $\mcl{P}[\hat{G}]=\mcl{P}^*[G]$ of the PI operator $\mcl{P}[G]$ in Lem.~\ref{lem:adjoint}}\label{fig:adjoint}
		\vspace*{-0.5cm}
	\end{figure*}
	
	Using the results from Sections~\ref{sec:LPI_L2_Gain} and~\ref{sec:Representation_of_ioSystems}, a bound on the $L_2$-gain of a large class of 2D PDEs can be verified using the LPI~\eqref{eq:KYP_inequality}. In this section, we show how feasibility of such LPIs can be tested using LMIs, by parameterizing a cone of positive PI operators $\mcl{P}\in\Pi_{0112,+}^{\{n_0,0,n_2\}\times\{n_0,0,n_2\}}$ mapping $\text{Z}_{2}^{\{n_0,0,n_2\}}$
 	by positive matrices. Since positive PI operators must be self-adjoint, we first show that the adjoint of any PI operator acting on $\text{Z}_{2}^{\{n_0,0,n_2\}}$ is also a PI operator.

	\begin{lem}[Adjoint of 0112-PI operator]\label{lem:adjoint}
		Suppose $ G:=\smallbmat{B_{00}&B_{01}&B_{02}&C_{0}\\B_{10}&B_{11}&B_{12}&C_{1}\\B_{20}&B_{21}&B_{22}&C_{2}\\D_{0}&D_{1}&D_{2}&N} \in \mcl N_{0112}^{\enn{v}\times\enn{u}}$ for some $\enn{u}=\{m_0,m_1,m_2\}$ and $\enn{v}=\{n_0,n_1,n_2\}$, and where, for $j\in\{1,2\}$,
		\begin{align*}
		B_{jj}&=\{B_{jj,0},B_{jj,1},B_{jj,2}\}\in\mcl{N}_{1D}^{n_1\times m_1},	\\
		C_{j}&=\{C_{j,0},C_{j,1},C_{j,2}\}\in\mcl{N}_{2D\rightarrow 1D}^{n_1\times m_2},	\\
		D_{j}&=\{D_{j,0},D_{j,1},D_{j,2}\}\in\mcl{N}_{1D\rightarrow 2D}^{n_2\times m_1},	\\
		N&=\smallbmat{N_{00}&N_{01}&N_{02}\\N_{10}&N_{11}&N_{12}\\N_{20}&N_{21}&N_{22}}\in\mcl{N}_{2D}^{n_2\times m_2},	& &
		\end{align*}
		and let $\hat{G}:=\mcl{L}_{\text{adj}}(G)$, where $\mcl{L}_{\text{adj}}:\mcl{N}_{0112}^{\enn{v}\times\enn{u}}\rightarrow\mcl{N}_{0112}^{\enn{u}\times\enn{v}}$ is as defined in Eqn.~\eqref{eq:L_adj} in Fig.~\ref{fig:adjoint}.	Then for any $\mbf{u}\in \text{Z}_{2}^{\enn{u}}[\Omega_{ac}^{bd}]$ and $\mbf{v}\in \text{Z}_{2}^{\enn{v}}[\Omega_{ac}^{bd}]$,
		\[
		\ip{\mbf{v}}{\mcl{P}[G]\mbf{u}}_{\text{Z}_{2}^{\enn{v}}}=\ip{\mcl{P}[\hat{G}]\mbf{v}}{\mbf{u}}_{\text{Z}_{2}^{\enn{u}}}.\\
		\]
	\end{lem}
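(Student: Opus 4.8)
The plan is to prove the identity by direct computation: expand both sides into finite sums of integrals over $\Omega_{ac}^{bd}$ and its boundary intervals, and then match the two sides term by term using Fubini's theorem together with relabeling of dummy integration variables. Because the pairing $\ip{\cdot}{\cdot}_{\text{Z}_{2}^{\enn{v}}}$ splits into a Euclidean product on the $\R^{n_0}$ component and three $L_2$ products on the remaining components, and because $\mcl{P}[G]$ is block-structured as $\smallbmat{\mcl{P}[B]&\mcl{P}[C_1]\\\mcl{P}[C_2]&\mcl{P}[N]}$, it suffices to verify the adjoint relation separately for the four blocks: the $011$-block $\mcl{P}[B]$ on the $\text{Z}_1$ components, the $2D$-block $\mcl{P}[N]$ on the $L_2[\Omega_{ac}^{bd}]$ component, and the two cross-blocks. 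The block-transpose layout of $\hat{G}$ in Eqn.~\eqref{eq:L_adj} (in which the right column and bottom row of $G$ are interchanged) is then simply the statement that the adjoint of a block operator is the transpose of its individually adjointed blocks.

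The single computation underlying every term is the adjoint of one partial-integral kernel. A multiplier term $N_0(x)\mbf{v}(x)$ pairs as $\int_a^b w(x)^T N_0(x)\mbf{v}(x)\,dx$ and immediately produces the transposed multiplier $N_0^T$. For a lower-triangular term $\int_a^x N_1(x,\theta)\mbf{v}(\theta)\,d\theta$, I would apply Fubini over the region $a\le\theta\le x\le b$ and then interchange the names of $x$ and $\theta$; this turns a lower-triangular kernel into an upper-triangular one with transposed, argument-swapped parameter, giving $\hat{N}_2(x,\theta)=N_1^T(\theta,x)$, and symmetrically $\hat{N}_1(x,\theta)=N_2^T(\theta,x)$. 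This one manipulation accounts for every entry of Eqn.~\eqref{eq:adjoint_params}: the plain transposition of the constant/boundary blocks, the interchange of the two triangular kernels in each $\mcl{N}_{1D}$ triple (e.g.\ $\hat{B}_{11,1}(x,\theta)=B_{11,2}^T(\theta,x)$), and, for the $2D$ block $N$, the two-variable version in which Fubini is applied jointly in $x$ and $y$, interchanging the four quadrant kernels (e.g.\ $\hat{N}_{22}(x,y,\theta,\nu)=N_{11}^T(\theta,\nu,x,y)$) and swapping the argument pairs.

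For the cross-blocks I would run the same argument across spaces. A multiplier block into $L_2[\Omega_{ac}^{bd}]$ pairs, after transposition, against an integral block out of $L_2[\Omega_{ac}^{bd}]$ (for instance the $D_0$-multiplier pairs against $\smallint_{\Omega_{ac}^{bd}}[D_0^T]$, yielding $\hat{C}_0=D_0^T$), and the triangular $\mcl{N}_{1D\rightarrow 2D}$ and $\mcl{N}_{2D\rightarrow 1D}$ kernels are interchanged exactly as in the square blocks. This is precisely why the $C$- and $D$-blocks of $G$ trade positions, with a transpose, in Eqn.~\eqref{eq:adjoint_params}. All the Fubini interchanges are justified by square-integrability of the kernels and of $\mbf{u},\mbf{v}$, so no convergence subtleties arise and the order of integration may be freely exchanged.

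The main obstacle is organizational rather than conceptual: $\mcl{P}[G]$ expands into several dozen scalar integral terms, and the proof consists of checking that each lands on the correct transposed, variable-swapped counterpart in $\hat{G}$. I would manage this by fixing the elementary lower/upper-triangular adjoint identity above once, and then applying it mechanically block by block, so that the remaining effort is careful bookkeeping of indices and dummy variables rather than any new analysis.
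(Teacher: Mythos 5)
Your proposal is correct and takes essentially the same route as the paper's proof in Appendix~I-B: both expand the $\text{Z}_2$ inner product block by block and verify each resulting term by Fubini-type interchange of integration order, relabeling of dummy variables, and transposition of kernels, which is exactly what produces the swap of the triangular kernels and of the $C$/$D$ blocks in Eqn.~\eqref{eq:adjoint_params}. The only difference is expository rather than mathematical: the paper cites its earlier 1D and 2D adjoint lemmas (Lem.~6 of~\cite{peet2019PIE_representation} and Lem.~8 of~\cite{jagt2021PIEArxiv}) for the diagonal blocks and dismisses the cross-blocks as straightforward, whereas you re-derive the elementary triangular-kernel identity from scratch and apply it uniformly.
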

	\smallskip
	\begin{proof}
		A proof is given in Appendix~\ref{sec:appx_adjoint}.
	\end{proof}
	\medskip

	Having defined the adjoint of PI operators on $\text{Z}_{2}^{\{n_0,0,n_2\}}$, we now propose a parameterization of a cone of positive PI operators on this function space. A proof of this result can be found in Appendix.~III. 

	\begin{prop}\label{prop:positive_PI_parameterization}
		For any $Z\in L_2^{q\times n_2}[\Omega_{ac}^{bd}\times\Omega_{ac}^{bd}]$ and scalar function $g\in L_2[\Omega_{ac}^{bd}]$ satisfying $g(x,y)\geq 0$ for any $(x,y)\in\Omega_{ac}^{bd}$, let $\mcl{L}_{\text{PI}}:\R^{(9q+n_0)\times (9q+n_0)}\rightarrow\mcl{N}_{0112}^{\enn{u}\times \enn{u}}$ be as defined in Eqn.~\eqref{eq:appx_posmat_to_posPI_appendix} in Appendix~\ref{sec:appx_pos_PI}, where $\enn{u}:=\{n_0,0,n_2\}$. Then, for any $P\geq 0$, if $B=\mcl{L}_{\text{PI}}(P)$, then 
		$\mcl{P}:=\mcl{P}[B]\in\Pi_{0112}^{\enn{u}\times\enn{u}}$ satisfies $\mcl{P}^*=\mcl{P}$ and $\ip{\mbf{u}}{\mcl{P}\mbf{u}}_{\text{Z}_{2}^{\enn{u}}}\geq 0$ for any $\mbf{u}\in \text{Z}_{2}^{\enn{u}}$.
	\end{prop}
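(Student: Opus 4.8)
The plan is to exhibit $\mcl{P}:=\mcl{P}[\mcl{L}_{\text{PI}}(P)]$ as a manifestly factored operator, so that both self-adjointness and nonnegativity come for free and the only genuine work is an algebraic identity. Concretely, I would introduce a bounded linear map $\mcl{Z}:\text{Z}_{2}^{\enn{u}}\rightarrow L_2^{9q+n_0}[\Omega_{ac}^{bd}]$ sending $\mbf{u}=(u_0,\mbf{u}_2)$ to a vector field $(\mcl{Z}\mbf{u})(\eta)$ indexed by an auxiliary point $\eta=(\eta_x,\eta_y)\in\Omega_{ac}^{bd}$. Its first $n_0$ components are the constant $u_0$, while the remaining $9q$ components collect the nine regional integrals of $Z(\eta,\cdot)\mbf{u}_2$ obtained by comparing the integration point $(\theta,\nu)$ to $\eta$ in each coordinate (the modes $\theta=\eta_x$, $\theta<\eta_x$, $\theta>\eta_x$, and likewise for $\nu$) -- exactly the $3\times 3$ decomposition underlying the nine kernels $N_{00},\dots,N_{22}$ of a 2D-PI operator. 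The claim to establish is then
\begin{align*}
\mcl{P}[\mcl{L}_{\text{PI}}(P)]=\mcl{Z}^*\,\text{M}[gP]\,\mcl{Z},
\end{align*}
where $\text{M}[gP]$ denotes multiplication by the pointwise matrix $g(\eta)P$.

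Granting this factorization, the two assertions are immediate. Since $g$ is real-valued and $P=P^T$, the multiplier $\text{M}[gP]$ is self-adjoint, so $\bl(\mcl{Z}^*\text{M}[gP]\mcl{Z}\br)^*=\mcl{Z}^*\text{M}[gP]\mcl{Z}$, giving $\mcl{P}^*=\mcl{P}$; here I would invoke Lemma~\ref{lem:adjoint} to guarantee that the adjoint and the composition are again 0112-PI operators on $\text{Z}_{2}^{\enn{u}}$. For positivity, for any $\mbf{u}\in\text{Z}_{2}^{\enn{u}}$,
\begin{align*}
\ip{\mbf{u}}{\mcl{P}\mbf{u}}_{\text{Z}_{2}^{\enn{u}}}=\ip{\mcl{Z}\mbf{u}}{\text{M}[gP]\mcl{Z}\mbf{u}}_{L_2}=\int_{\Omega_{ac}^{bd}} g(\eta)\,(\mcl{Z}\mbf{u})(\eta)^T P\,(\mcl{Z}\mbf{u})(\eta)\,d\eta\geq 0,
\end{align*}
because the integrand is nonnegative pointwise ($g\geq 0$ and $P\geq 0$).

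All the real content therefore sits in verifying the factorization, which I would carry out by expanding $\int_{\Omega_{ac}^{bd}} g(\eta)(\mcl{Z}\mbf{v})(\eta)^TP(\mcl{Z}\mbf{u})(\eta)\,d\eta$ into a sum over the blocks of $P$ and matching it, term by term, against the bilinear form read off from the definition of a 0112-PI operator with parameters $\mcl{L}_{\text{PI}}(P)$. Each block product pairs a regional integral of $Z\mbf{v}$ with one of $Z\mbf{u}$ against $g(\eta)\,d\eta$; applying Fubini and -- where both paired blocks carry genuine integrals -- relabeling the residual $\eta$-integral's limits as the $\max/\min$ of the two sample points yields one of the nine kernels, with limits of the form $\int_{a}^{x},\int_{x}^{b}$ and their $y$-analogues determined by which regions were paired. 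The main obstacle is precisely this bookkeeping: there are on the order of $81$ cross terms to route to the correct kernel $N_{ij}$; the multiplier directions (the mode-$0$ slots producing $N_{00},N_{01},N_{02},N_{10},N_{20}$) need separate care, since there the evaluation at $\eta$ pins a sample coordinate and the corresponding $\eta$-integral collapses rather than producing a double integral; and one must check that the transpose-and-flip conventions of $\mcl{L}_{\text{adj}}$ in Eqn.~\eqref{eq:adjoint_params} are consistent with the symmetric pairing, so that the nine matched kernels coincide exactly with $\mcl{L}_{\text{PI}}(P)$. Once each kernel is verified, the factorization holds and the proof is complete.
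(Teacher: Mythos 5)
Your proposal is correct and takes essentially the same approach as the paper: the paper's proof likewise factors the operator through a PI operator $\mcl{Z}$ built from the nine regional integrals of $Z$, writing $\mcl{P}[\mcl{L}_{\text{PI}}(P)]=\mcl{Z}^*P\mcl{Z}$ (with $\sqrt{g}$ absorbed into $\mcl{Z}$ rather than kept as your middle multiplier $\text{M}[gP]$ -- an equivalent repackaging), reads off self-adjointness from the transpose structure of the parameters, and obtains positivity via $\ip{\mbf{u}}{\mcl{P}\mbf{u}}_{\text{Z}_{2}^{\enn{u}}}=\ip{P^{\frac{1}{2}}\mcl{Z}\mbf{u}}{P^{\frac{1}{2}}\mcl{Z}\mbf{u}}_{L_2}\geq 0$, which is interchangeable with your pointwise-nonnegativity argument. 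Like you, the paper delegates the term-by-term verification of the factorization to the composition rules of PI operators rather than expanding all cross terms explicitly, so your plan matches the paper's proof in both structure and level of detail.
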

	\paragraph*{Outline of Proof}
	A PI operator $\mcl{Z}\in\Pi_{0112}^{\{0,0,n_0+n_2\}\times \{n_0,0,n_2\}}$ is defined in terms of the functions $Z\in L_2^{q\times n_2}[\Omega_{ac}^{bd}\times\Omega_{ac}^{bd}]$ and $\sqrt{g}\in L_2[\Omega_{ac}^{bd}]$. The map $\mcl{L}_{\text{PI}}:\R^{(9q+n_0)\times (9q+n_0)}\rightarrow\mcl{N}_{0112}^{\enn{u}\times\enn{u}}$ is then defined such that $\mcl{P}[P]=\mcl{Z}^*P\mcl{Z}$ for any $P\in\R^{(9q+n_0)\times (9q+n_0)}$. It follows that, for any $P\geq 0$, 
	\begin{align*}
	\ip{\mbf{u}}{\mcl{P}\mbf{u}}_{\text{Z}_{2}^{\enn{u}}}
	=\ip{\mcl{Z}\mbf{u}}{P\mcl{Z}\mbf{u}}_{L_2}
	=\ip{P^{\frac{1}{2}}\mcl{Z}\mbf{u}}{P^{\frac{1}{2}}\mcl{Z}\mbf{u}}_{L_2}
	\geq 0,
	\end{align*}
	for any $\mbf{u}\in \text{Z}_{2}^{\enn{u}}=\R^{n_0}\times L_2^{n_1}[\Omega_{ac}^{bd}]$.

	Parameterizing positive PI operators as in Prop.~\ref{prop:positive_PI_parameterization}, we use a monomial basis $Z_d$ of degree at most $d$ to define $\mcl{Z}$, yielding polynomial paramaters $B =\mcl L_{\text{PI}} (P)$ for any (positive) matrix $P$. For the scalar function $g(x,y)\geq 0$, we include the candidates\\[-3.5ex]
	\begin{align}\label{eq:psatz_funs}
	 g_0(x,y)&\!=\!1,	\hspace*{0.35cm}
	 g_1(x,y)\!=\!(x\!-\!a)(b\!-\!x)(y\!-\!c)(d\!-\!y),	
	\end{align}
	which are all nonnegative on the domain $\Omega_{ac}^{bd}:= [a,b]\times[c,d]$. We denote the resulting set of operators as $\Xi_d$, so that\\[-3.5ex]
	\begin{align*}
	\Xi_d:=\bbbl\{&\sum_{j=0}^{2}\mcl{P}[B_j]\ \bbl\lvert B_j=\mcl{L}_{\text{PI}}(P_j)
	\text{ for some }P_j\geq 0,\\[-0.6cm]
	&\hspace*{1.8cm}\text{with } Z=Z_d \text{ and } g_j(x,y) \text{ as in}~\eqref{eq:psatz_funs}\bbbr\} \\[-4.5ex]
	\end{align*}
	where now $\mcl{P}\in\Xi_d$ is an LMI constraint implying $\mcl{P}\geq 0$.

	\paragraph*{Computational complexity}
	Since the number of monomials of degree at most $d$ in 2 variables is of the order $\mcl{O}(d^2)$, the size of the matrix $P\in\mathbb{S}^{q\times q}$ parameterizing a 2D-PI operator $\mcl{P}[P]\in\Xi_{d}$ will be $q=\mcl{O}(nd^2)$, for $\mcl{P}[P]\in\Pi_{2D}^{n\times n}$. As such, the number of decision variables in the LMI $P\geq 0$ will scale with $q^2=\mcl{O}(n^2 d^4)$ -- a substantial increase compared to the $\mcl{O}(n^2 d^2)$ scaling for 1D PDEs, and the $\mcl{O}(n^2)$ scaling for ODEs.
	Nevertheless, accurate $L_2$-gain bounds for 2D PDEs can already be verified with $d=1$, as we illustrate in Section~\ref{sec:Numerical_Examples}.

	\section{An LMI For $L_2$-gain Analysis of 2D PDEs}\label{sec:L2_gain_PDE}
	
	Combining the results from the previous sections, we finally construct an LMI test for verifying an upper bound on the $L_2$-gain of a 2D PDE.
	
	\begin{thm}\label{thm:KYP_PDE}
		Let parameters $\{A_{ij},B,C_{ij},D,E_0,E_1\}$ with $E_1=0$ define a PDE of the form~\eqref{eq:standard_PDE} as in Subsection~\ref{sec:subsec:ioPDE_Representation}. Let associated operators $\{\mcl{T}_0,\mcl{T}_1,\mcl{A},\mcl{B},\mcl{C},\mcl{D}\}$ be as defined in Lemma~\ref{lem:PDE_to_PIE} in Subsection~\ref{sec:subsec:ioPIE_Representation}. Finally, let $\gamma>0$, and suppose there exists a PI operator $\mcl{P}\in\Pi_{2D}^{n_v\times n_v}$ such that $\mcl{P}-\epsilon I\in\Xi_{d_1}$ and $-\mcl{Q}\in\Xi_{d_2}$ for some $d_1,d_2\in\N$ and $\epsilon>0$, where
		{\small
		\begin{align}\label{eq:KYP_inequality_PDE}
		&\mcl{Q}:=\bmat{-\gamma I &\! \mcl{D} &\! \mcl{C}\\
			(\cdot)^* &\! -\gamma I &\! \mcl{B}^*\mcl{P}\mcl{T}_0	\\
			(\cdot)^* &\! (\cdot)^* &\! (\cdot)^* + \mcl{T}_0^*\mcl{P}\mcl{A}}.
		\end{align}
		}
		Then, for any $w\in L_2^{n_w}[0,\infty)$, if $(w(t),z(t))$ satisfies the PDE~\eqref{eq:standard_PDE} for all $t\geq 0$, then $z\in L_2^{n_z}[0,\infty)$ and $\frac{\|z\|_{L_2}}{\|w\|_{L_2}}\leq\gamma$.
	\end{thm}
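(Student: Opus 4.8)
The plan is to obtain Theorem~\ref{thm:KYP_PDE} as a synthesis of three results already in hand: the PIE-level $L_2$-gain certificate of Lemma~\ref{lem:KYP}, the PDE-to-PIE equivalence of Lemma~\ref{lem:PDE_to_PIE}, and the positivity of the operator cone $\Xi_d$ guaranteed by Proposition~\ref{prop:positive_PI_parameterization}. Since the operators $\{\mcl{T}_0,\mcl{T}_1,\mcl{A},\mcl{B},\mcl{C},\mcl{D}\}$ are assumed to be defined as in Lemma~\ref{lem:PDE_to_PIE}, the hypothesis of Theorem~\ref{thm:Tmap} that $\mcl{E}_0\mcl{H}_1$ be invertible is in force.

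First I would use the hypothesis $E_1=0$ to reduce the associated PIE to the primal form~\eqref{eq:primal_PIE}. Since $\mcl{E}_1=\text{M}[E_1]=0$, the expression $\mcl{T}_1=-\mcl{K}_1\mcl{R}^{-1}\mcl{E}_1$ from the proof of Theorem~\ref{thm:Tmap} gives $\mcl{T}_1=0$, and hence $\mcl{B}=\bar{\mcl{B}}$ and $\mcl{D}=\bar{\mcl{D}}$ in Lemma~\ref{lem:PDE_to_PIE}. The term $\mcl{T}_1\dot{w}(t)$ therefore drops out of the PIE~\eqref{eq:standard_PIE}, which collapses to exactly~\eqref{eq:primal_PIE} with $\mcl{T}=\mcl{T}_0$. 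By Lemma~\ref{lem:PDE_to_PIE}, $(\mbf{v},z)$ solves this PIE if and only if $(\bar{\mbf{v}},z)$ with $\bar{\mbf{v}}=\mcl{T}_0\mbf{v}$ solves the PDE~\eqref{eq:standard_PDE}, and zero PIE initial data $\mbf{v}(0)=\mbf{0}$ corresponds to $\bar{\mbf{v}}(0)=\mcl{T}_0\mbf{0}=\mbf{0}$. Thus, given any PDE solution $(\bar{\mbf{v}},z)$ with the zero initial condition implicit in the definition of the $L_2$-gain, the pair $(\mscr{D}\bar{\mbf{v}},z)$ is a corresponding primal-PIE solution driven by the same $w$ and producing the same $z$.

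Next I would translate the two cone memberships into the hypotheses of Lemma~\ref{lem:KYP}. By Proposition~\ref{prop:positive_PI_parameterization}, every operator in $\Xi_d$ is self-adjoint and positive semidefinite on the relevant space $\text{Z}_{2}^{\{n_0,0,n_2\}}$. Applied to $\mcl{P}-\epsilon I\in\Xi_{d_1}$ on $\text{Z}_{2}^{\{0,0,n_v\}}=L_2^{n_v}$, this yields $\mcl{P}=\mcl{P}^*$ and $\mcl{P}\geq\epsilon I$, hence $\mcl{P}>0$; applied to $-\mcl{Q}\in\Xi_{d_2}$ on $\text{Z}_{2}^{\{n_z+n_w,0,n_v\}}$, it yields $\mcl{Q}=\mcl{Q}^*\leq 0$. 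Because $\mcl{P}=\mcl{P}^*$, the self-adjoint block $(\cdot)^*+\mcl{T}_0^*\mcl{P}\mcl{A}$ of $\mcl{Q}$ equals $\mcl{A}^*\mcl{P}\mcl{T}_0+\mcl{T}_0^*\mcl{P}\mcl{A}$, so the inequality $\mcl{Q}\leq 0$ is precisely the LPI~\eqref{eq:KYP_inequality} read with $\mcl{T}=\mcl{T}_0$.

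Finally, with $\mcl{P}=\mcl{P}^*>0$ and LPI~\eqref{eq:KYP_inequality} verified, Lemma~\ref{lem:KYP} (applied with $\mcl{T}=\mcl{T}_0$) gives $z\in L_2^{n_z}[0,\infty)$ and $\|z\|_{L_2}\leq\gamma\|w\|_{L_2}$ for the primal PIE; transporting this bound back along the equivalence established in the first step gives the same conclusion for the original PDE. I do not expect a substantive obstacle, since the statement is a combination of earlier results; the one point requiring care is the role of $E_1=0$, which is exactly what forces $\mcl{T}_1=0$ and removes the $\mcl{T}_1\dot{w}$ feedthrough, placing the PIE in the class~\eqref{eq:primal_PIE} for which Lemma~\ref{lem:KYP} was established. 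Without this hypothesis, the gain certificate would not apply verbatim, and an additional argument handling the $\dot{w}$ term would be required.
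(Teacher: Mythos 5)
Your proposal is correct and follows essentially the same route as the paper's own proof: convert the PDE solution to a solution of the primal PIE via Lemma~\ref{lem:PDE_to_PIE} together with the observation that $E_1=0$ forces $\mcl{T}_1=0$ (Theorem~\ref{thm:Tmap}), use Proposition~\ref{prop:positive_PI_parameterization} to turn the cone memberships $\mcl{P}-\epsilon I\in\Xi_{d_1}$ and $-\mcl{Q}\in\Xi_{d_2}$ into $\mcl{P}=\mcl{P}^*>0$ and $\mcl{Q}\leq 0$, and then invoke Lemma~\ref{lem:KYP} with $\mcl{T}=\mcl{T}_0$. Your treatment is in fact slightly more explicit than the paper's on two minor points — that $\mcl{P}-\epsilon I\geq 0$ yields the \emph{strict} positivity $\mcl{P}\geq\epsilon I$ required by Lemma~\ref{lem:KYP}, and that zero PIE initial data corresponds to zero PDE initial data — but these are refinements of the same argument, not a different one.
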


	\begin{proof}
		Let the parameters $\{A_{ij},B,C_{ij},D,E_0,E_1\}$ and operators $\{\mcl{T}_0,\mcl{T}_1,\mcl{A},\mcl{B},\mcl{C},\mcl{D}\}$ be as proposed. Let $w\in L^{n_w}_2[0,\infty)$ be arbitrary, and let $(\bar{\mbf{v}},z)$ be a solution to the PDE~\eqref{eq:standard_PDE} with input $w$. Then, by Lem.~\ref{lem:PDE_to_PIE}, letting $\mbf{v}=\mscr{D}\bar{\mbf{v}}$, $(\mbf{v},z)$ is a solution to the PIE~\eqref{eq:standard_PIE} with input $w$. Since $E_1=0$, it follows by Thm.~\ref{thm:Tmap} that $\mcl{T}_1=0$, and therefore $(\mbf{v},z)$ is a solution to the PIE~\eqref{eq:primal_PIE} with $\mcl{T}=\mcl{T}_0$. Finally, by Prop.~\ref{prop:positive_PI_parameterization}, if $\mcl{P}-\epsilon I\in\Xi_{d_1}$ and $-\mcl{Q}\in\Xi_{d_2}$, we have $\mcl{P}>0$ and $\mcl{Q}\leq 0$. Then, all conditions of Lem.~\ref{lem:KYP} are satisfied, and we find that $z\in L_2^{n_z}[0,\infty)$ and $\frac{\|z\|_{L_2}}{\|w\|_{L_2}}\leq\gamma$.
	\end{proof}

	\section{Numerical Examples}\label{sec:Numerical_Examples}
	
	In this section, results of several numerical tests are presented, computing an upper bound on the $L_2$-gain of 2D PDEs using the LPI methodology proposed in the previous sections, incorporated into the MATLAB toolbox PIETOOLS~\cite{shivakumar2021PIETOOLS}. Results are shown using monomials of degree at most $d=1$ to parameterize the positive operator $\mcl{P}\in\Xi_d$ in Theorem~\ref{thm:KYP_PDE}.	
	Estimates of the $L_2$-gain computed using discretization are also shown, using a finite difference scheme on $N\times N$ uniformly distributed grid points.
		
	For each of the proposed PDEs, a regulated output $z(t)=\int_{\Omega_{a}^{b}}\int_{\Omega_{c}^{d}}\bar{\mbf{v}}(t,x,y)dydx$ is considered, corresponding to $C_{00}=I$ and $C_{ij}=0$ for all other $i,j\in\{0,1,2\}$ in Eqns.~\eqref{eq:PDE_operators} defining the parameters for the PDE~\eqref{eq:standard_PDE}. 
	
	\subsection{KISS Model}
	Consider first a particular instance of the KISS model as presented in~\cite{holmes1994partial}, with uniformly distributed disturbances on $[0,1]\times[0,1]$, and Dirichlet boundary conditions, \\[-3.25ex]
	\begin{align}\label{eq:Heat_eq}
	 &\dot{\bar{\mbf{v}}}(t)=\bbl[\partial_{x}^2\bar{\mbf{v}}(t) + \partial_{y}^2\bar{\mbf{v}}(t)\bbr] + \lambda\bar{\mbf{v}}(t) + w(t)	\nonumber\\[-0.25ex]
	 &0=\bar{\mbf{v}}(t,0,y)=\bar{\mbf{v}}(t,1,y)=\bar{\mbf{v}}(t,x,0)=\bar{\mbf{v}}(t,x,1). \\[-4.0ex]
	 \ \nonumber
	\end{align}
	Figure~\ref{fig:Heat_eq_L2_gain} presents bounds on the $L_2$-gain of this system for $\lambda\in[9,19]$, computed using the LPI approach. Gains estimated using discretization with $N=12$ grid points are also displayed. The results show that the LPI method is able to achieve (provably valid) bounds on the $L_2$-gain that are lower than the values estimated through discretization. 
	
	\begin{figure}[h!]
		\centering
		\hspace*{-0cm}\includegraphics[width=0.475\textwidth]{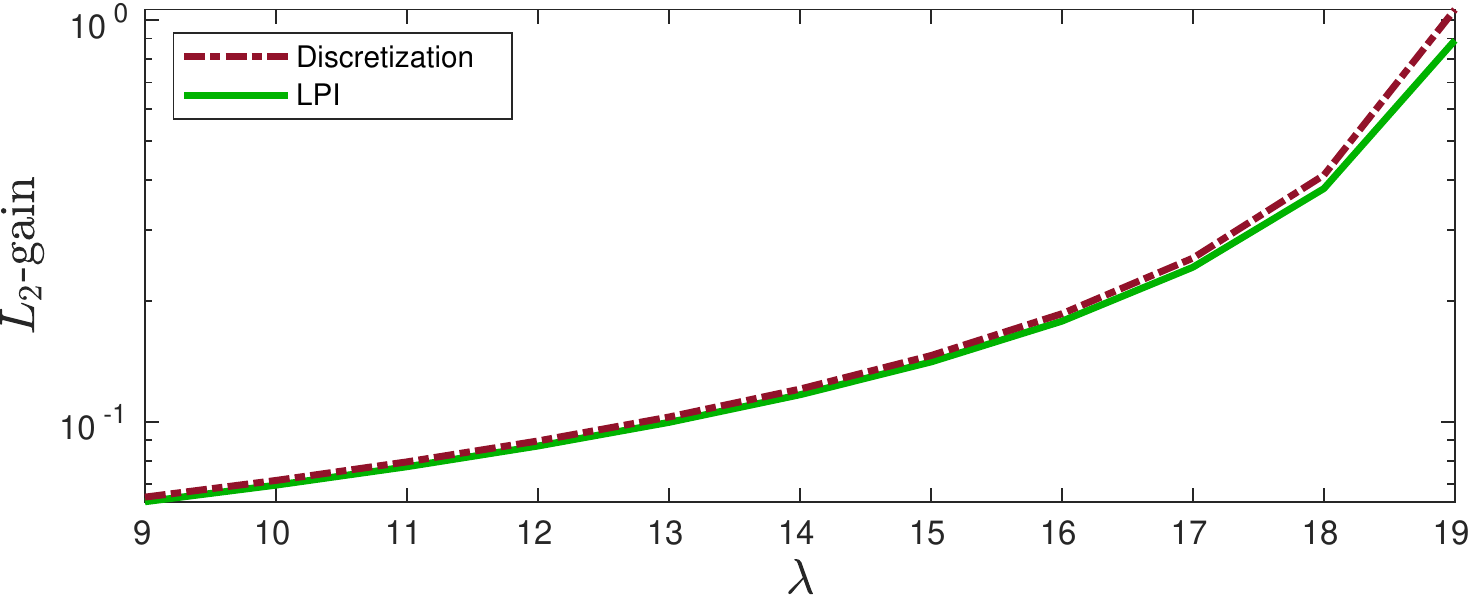}
		\vspace*{-0.3cm}
		\caption{\footnotesize 
			Bounds on the $L_2$-gain of System~\eqref{eq:Heat_eq} computed using the LPI methodology, parameterizing $\mcl{P}\in\Xi_d$ in Thm.~\ref{thm:KYP_PDE} using monomials of degree at most $d=1$. Estimates of the gain computed through discretization are also shown, using a grid of $12\times 12$ uniformly distributed points.
		}
		\label{fig:Heat_eq_L2_gain}
		\vspace*{-0.4cm}
	\end{figure}

	\subsection{Other Parabolic Systems}\label{sec:Num_example_2}
	
	Consider now a bound on the $L_2$-gain computed using the LPI approach, and an estimated gain computed using discretizaion, for each of the following variations on System~\eqref{eq:Heat_eq}, where $g(x,y):= 1-2(x-0.5)^2 + 2(y-0.5)^2$:
	
	1) Using an inhomogeneously distributed reaction term:\\[-3.5ex]
	\begin{align*}
	\dot{\bar{\mbf{v}}}(t)&=\bl[\bar{\mbf{v}}_{xx}(t) + \bar{\mbf{v}}_{yy}(t)\br] + g(x,y)\bar{\mbf{v}}(t) + w(t).	\\[-3.75ex]
	\end{align*}
	
	2) Using an inhomogeneously distributed disturbance:\\[-3.5ex]
		\begin{align*}
		\dot{\bar{\mbf{v}}}(t)&=\bl[\bar{\mbf{v}}_{xx}(t) + \bar{\mbf{v}}_{yy}(t)\br] + \bar{\mbf{v}}(t) + g(x,y)w(t).	\\[-3.75ex]
		\end{align*}
	
	3) Using $\lambda=1$ and Neumann boundary conditions:\\[-3.5ex]
		\begin{align*}
			0&=\bar{\mbf{v}}(t,0,y)=\partial_x\bar{\mbf{v}}(t,1,y)=\bar{\mbf{v}}(t,x,0)=\partial_y\bar{\mbf{v}}(t,x,1). \\[-3.75ex]	
		\end{align*}

	The results of each test are provided in Table~\ref{tab:Numerical_results}, along with the required CPU times. The results once more show that the LPI method is able to produce bounds on the $L_2$-gain which are smaller than the estimates obtained through discretization, in relatively short time.

\begin{table}[!h]
	\begin{tabular}{l||l|l l l | l}
			
		& & \multicolumn{3}{c|}{Discretization} & LPI\\ [0.25ex] \hline\hline
		& & $N=6$ & $N=9$ & $N=12$ & $d=1$  \\ [0.3ex] \hline 
	\multirow{2}{0.3em}{\!1)\!}  & \!$L_2$-Gain	& $0.0404$ & $0.0384$ & $0.0376$ &  \!$0.0367$  \\ [0.1ex] 
		& \!CPU Time (s)\!\! & $5.56$ & $6.58\cdot 10^2$ & $3.75\cdot 10^4$ & \!$1.73\cdot 10^4$ \\ [0.5ex]  
	\multirow{2}{0.3em}{\!2)\!}  & \!$L_2$-Gain & $0.0315$  & $0.0302$  & $0.0298$ & \!$0.0293$ \\ [0.1ex] 
		& \!CPU Time (s)\!\! & $3.91$ & $6.59\cdot 10^2$ & $3.76\cdot 10^4$ & \!$2.64\cdot 10^5$ \\ [0.5ex]  
	\multirow{2}{0.3em}{\!3)\!}  & \!$L_2$-Gain	& $0.1793$  & $0.1767$  & $0.1758$ & \!$0.1747$ \\ [0.1ex] 
		& \!CPU Time (s)\!\! & $3.77$ & $6.59\cdot 10^2$ & $4.09\cdot 10^{4}$ & \!$1.32\cdot 10^4$ \\ 
	\end{tabular}
	\caption{\normalfont \footnotesize 
		Bounds on the $L_2$-gain for variations 1 through 3 on System~\eqref{eq:Heat_eq} \\[-0.1ex]
		computed using the LPI approach, along with the CPU time required for \\[-0.1ex]
		each test. Estimates computed using discretization are also provided, using $N\times N$ uniformly distributed grid points. \\[-0.1ex]
	}
	\label{tab:Numerical_results}
		\vspace*{-1.05cm}
\end{table}

	\section{Conclusion}
	
	In this paper, a new method for estimating the $L_2$-gain of linear, 2nd order, 2D PDEs, using semidefinite programming was presented. To this end, it was proved that any such PDE can be equivalently represented by a PIE, and the necessary formulae to convert between the representations was derived. It was further proved that the problem of verifying an upper bound on the $L_2$-gain of a PIE can be posed as an LPI, and a method for parameterizing such LPIs as LMIs was presented. Implementing this approach in the MATLAB toolbox PIETOOLS, relatively accurate bounds on the $L_2$-gain of several PDEs could be numerically computed.
	
	\vspace*{-0.1cm}

	\bibliographystyle{IEEEtran}
	\bibliography{bibfile}

\begin{thebibliography}{10}
\providecommand{\url}[1]{#1}
\csname url@samestyle\endcsname
\providecommand{\newblock}{\relax}
\providecommand{\bibinfo}[2]{#2}
\providecommand{\BIBentrySTDinterwordspacing}{\spaceskip=0pt\relax}
\providecommand{\BIBentryALTinterwordstretchfactor}{4}
\providecommand{\BIBentryALTinterwordspacing}{\spaceskip=\fontdimen2\font plus
\BIBentryALTinterwordstretchfactor\fontdimen3\font minus
  \fontdimen4\font\relax}
\providecommand{\BIBforeignlanguage}[2]{{%
\expandafter\ifx\csname l@#1\endcsname\relax
\typeout{** WARNING: IEEEtran.bst: No hyphenation pattern has been}%
\typeout{** loaded for the language `#1'. Using the pattern for}%
\typeout{** the default language instead.}%
\else
\language=\csname l@#1\endcsname
\fi
#2}}
\providecommand{\BIBdecl}{\relax}
\BIBdecl

\bibitem{holmes1994partial}
E.~E. Holmes, M.~Lewis, J.~Banks, and R.~R. Veit, ``Partial differential
  equations in ecology: Spatial interactions and population dynamics,''
  \emph{Ecology}, vol.~75, pp. 17--29, 1994.

\bibitem{boyd1994linear}
S.~Boyd, L.~El~Ghaoui, E.~Feron, and V.~Balakrishnan, \emph{Linear matrix
  inequalities in system and control theory}.\hskip 1em plus 0.5em minus
  0.4em\relax SIAM, 1994.

\bibitem{el2003analysis}
N.~H. El-Farra, A.~Armaou, and P.~D. Christofides, ``Analysis and control of
  parabolic {PDE} systems with input constraints,'' \emph{Automatica}, vol.~39,
  no.~4, pp. 715--725, 2003.

\bibitem{lieu2013L2_gain_Couette_flow}
B.~K. Lieu, M.~R. Jovanovi{\'c}, and S.~Kumar, ``Worst-case amplification of
  disturbances in inertialess {Couette} flow of viscoelastic fluids,''
  \emph{Journal of Fluid Mechanics}, vol. 723, pp. 232--263, 2013.

\bibitem{jovanovic2021io_flow_control}
M.~R. Jovanovi{\'c}, ``From bypass transition to flow control and data-driven
  turbulence modeling: an input--output viewpoint,'' \emph{Annual Review of
  Fluid Mechanics}, vol.~53, pp. 311--345, 2021.

\bibitem{selivanov2018Hinfty_filtering}
A.~Selivanov and E.~Fridman, ``Sampled-data {$H_{\infty}$} filtering of a {2D}
  heat equation under pointlike measurements,'' in \emph{2018 IEEE conference
  on decision and control (CDC)}.\hskip 1em plus 0.5em minus 0.4em\relax IEEE,
  2018, pp. 539--544.

\bibitem{selivanov2019Hinfty_control}
------, ``Delayed {$H_{\infty}$} control of {2D} diffusion systems under
  delayed pointlike measurements,'' \emph{Automatica}, vol. 109, p. 108541,
  2019.

\bibitem{ahmadi2018framework}
M.~Ahmadi, G.~Valmorbida, D.~Gayme, and A.~Papachristodoulou, ``A framework for
  input-output analysis of wall-bounded shear flows,'' \emph{arXiv preprint
  arXiv:1802.04974}, 2018.

\bibitem{shivakumar2019PIE_io_PDE}
S.~Shivakumar and M.~M. Peet, ``Computing input-ouput properties of coupled
  linear {PDE} systems,'' in \emph{2019 American Control Conference
  (ACC)}.\hskip 1em plus 0.5em minus 0.4em\relax IEEE, 2019, pp. 606--613.

\bibitem{shivakumar2020PIE_duality}
S.~Shivakumar, A.~Das, S.~Weiland, and M.~M. Peet, ``Duality and
  {$H_\infty$}-optimal control of coupled {ODE}-{PDE} systems,'' in \emph{2020
  59th IEEE Conference on Decision and Control (CDC)}.\hskip 1em plus 0.5em
  minus 0.4em\relax IEEE, 2020, pp. 5689--5696.

\bibitem{das2020PIE_robust_ODE_PDE}
A.~Das, S.~Shivakumar, M.~M. Peet, and S.~Weiland, ``Robust analysis of
  uncertain {ODE}-{PDE} systems using {PI} multipliers, {PIE}s and {LPI}s,'' in
  \emph{2020 59th IEEE Conference on Decision and Control (CDC)}, 2020, pp.
  634--639.

\bibitem{das2019PIE_estimation}
A.~Das, S.~Shivakumar, S.~Weiland, and M.~M. Peet, ``{$\mcl{H}_{\infty}$}
  optimal estimation for linear coupled {PDE} systems,'' in \emph{2019 IEEE
  58th Conference on Decision and Control (CDC)}.\hskip 1em plus 0.5em minus
  0.4em\relax IEEE, 2019, pp. 262--267.

\bibitem{jagt2021PIEArxiv}
D.~S. Jagt and M.~M. Peet, ``A {PIE} representation of coupled {2D} {PDE}s and
  stability analysis using {LPI}s,'' \emph{arXiv eprint:2109.06423}, 2021.

\bibitem{shivakumar2021PIETOOLS}
S.~Shivakumar, A.~Das, and M.~Peet, ``{PIETOOLS} 2020a: User manual,''
  \emph{arXiv preprint arXiv:2101.02050}, 2021.

\bibitem{peet2019PIE_representation}
M.~M. Peet, S.~Shivakumar, A.~Das, and S.~Weiland, ``Discussion paper: A new
  mathematical framework for representation and analysis of coupled {PDE}s,''
  \emph{IFAC-PapersOnLine}, vol.~52, no.~2, pp. 132--137, 2019.

\end{thebibliography}
	
	\clearpage
	
	\begin{appendices}

		\section{Properties of PI Operators}
		
		\subsection{Composition of Derivative and PI Operator}\label{sec:appx_div_operator}
		
		In this section, we prove that the composition of a differential operator $\partial_x$ and a suitable $0112$-PI operator can be expressed as a $0112$-PI operator. A similar result can be derived for the composition with a differential operator $\partial_y$, as shown in Lem.~\ref{lem:appx_div_operator_y}.
		
		\begin{lem}[Composition of $x$-derivative and PI operator]\label{lem:appx_div_operator_x}
			Suppose $G:=\smallbmat{()&()&()&()\\()&()&()&()\\()&()&()&()\\B&C&D&N}\in\mcl{N}_{0112}^{\{0,0,n_2\}\times\{m_0,m_1,m_2\}}$ where
			\begin{align*}
			B& 	& &\in W_1[\Omega_{ac}^{bd}]	\nonumber\\
			C&=\{0,C_1,C_2\}	& &\in\{0,W_1,W_1\}\subset\mcl{N}_{1D\rightarrow 2D},	\\
			D&=\{D_0,D_1,D_2\}	& &\in\{W_1,W_1,W_1\}\subset\mcl{N}_{1D\rightarrow 2D},	\\
			N\! &=\! \bmat{0&\! \! 0&\! \! 0\\N_{10}&\! \! N_{11}&\! \! N_{12}\\N_{20}&\! \! N_{21}&\! \! N_{22}} & &\in \bmat{
				0               &\! \! \! 0              &\! \! \!0\\
				W_1 &\! \! \! W_1&\! \! \! W_1\\
				W_1 &\! \! \! W_1&\! \! \! W_1}\! \subset\! \mcl{N}_{2D}^{n\times m}.
			\end{align*}
			Let $H:=\smallbmat{()&()&()&()\\()&()&()&()\\()&()&()&()\\P&Q&R&M}\in\mcl{N}_{0112}^{\{n_0,n_1,n_2\}\times\{m_0,m_1,m_2\}}$ with
			\begin{align}\label{eq:appx_div_operator_x}
			P& & &\in L_2[\Omega_{ac}^{bd}]	\nonumber\\
			Q&=\{Q_0,Q_1,Q_2\}	& &\in\mcl{N}_{1D\rightarrow 2D},	\nonumber\\
			R&=\{R_0,R_1,R_2\}	& &\in\mcl{N}_{1D\rightarrow 2D},	\nonumber\\
			M&=\bmat{M_{00}&M_{01}&M_{02}\\M_{10}&M_{11}&M_{12}\\M_{20}&M_{21}&M_{22}}&	&\in\mcl{N}_{2D}^{n\times m},	&	&\hspace*{1.0cm}
			\end{align}
			where, for $i,j\in\{1,2\}$,
			\begin{align*}
			&P(x,y)=\partial_x B(x,y),	\\
			&Q_0(x,y)=C_1(x,y,x)-C_2(x,y,x),	\\
			&Q_i(x,y,\theta)=\partial_x C_i(x,y,\theta),	\\
			&R_0(x,y)=\partial_x D_0(x,y),	\\
			&R_j(x,y,\nu)=\partial_x D_j(x,y,\nu),	\\
			&M_{00}(x,y)=N_{10}(x,y,x)-N_{20}(x,y,x),   \\
			&M_{i0}(x,y,\theta)=\partial_x N_{i0}(x,y,\theta),\\
			&M_{0j}(x,y,\nu)=N_{1j}(x,y,x,\nu)-N_{2j}(x,y,x,\nu),   \\
			&M_{ij}(x,y,\theta,\nu)=\partial_x N_{ij}(x,y,\theta,\nu).	& &
			\end{align*}
			Then, for any $\mbf{v}\in\text{Z}_{2}^{\{m_0,m_1,m_2\}}[\Omega_{ac}^{bd}]$,
			\begin{align*}
			\partial_x(\mcl{P}[G]\mbf{v})(x,y)
			=(\mcl{P}[H]\mbf{v})(x,y).\\
			\end{align*}
		\end{lem}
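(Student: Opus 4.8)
The plan is to expand $(\mcl{P}[G]\mbf{v})(x,y)$ explicitly from the definition of a $0112$-PI operator, differentiate it termwise in $x$, and recognize the result as $(\mcl{P}[H]\mbf{v})(x,y)$. Since the output space of $G$ is $\{0,0,n_2\}$, the entire output is the single 2D function obtained by acting with the bottom-row blocks $\{B,C,D,N\}$ on the four components $\mbf{v}=(v_0,\mbf{v}_x,\mbf{v}_y,\mbf{v}_2)$ of the input. Writing this out, $(\mcl{P}[G]\mbf{v})(x,y)$ is a sum of: the multiplier term $B(x,y)v_0$; the two $x$-integrals $\int_a^x C_1(x,y,\theta)\mbf{v}_x(\theta)d\theta+\int_x^b C_2(x,y,\theta)\mbf{v}_x(\theta)d\theta$ coming from $C=\{0,C_1,C_2\}$; the term $D_0(x,y)\mbf{v}_y(y)+\int_c^y D_1(x,y,\nu)\mbf{v}_y(\nu)d\nu+\int_y^d D_2(x,y,\nu)\mbf{v}_y(\nu)d\nu$ coming from $D$; and the single- and double $x,y$-integrals coming from $N$ (with $N_{00}=N_{01}=N_{02}=0$).

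Second, I would apply $\partial_x$ to each term, distinguishing three cases. (i) For the pure multiplier terms $B(x,y)v_0$ and $D_0(x,y)\mbf{v}_y(y)$, the inputs $v_0$ and $\mbf{v}_y(y)$ carry no $x$-dependence, so only the kernels are differentiated, yielding $P=\partial_x B$ and $R_0=\partial_x D_0$. (ii) For the $x$-integrals $\int_a^x$ and $\int_x^b$ (from $C$ and from $N_{i0},N_{ij}$), the Leibniz integral rule produces a boundary term from the variable limit, evaluated at the integration variable equal to $x$, plus an interior integral of the $x$-differentiated kernel. (iii) For the $y$-integrals $\int_c^y$ and $\int_y^d$ (from $D$ and from the $y$-direction of $N$), the limits are independent of $x$, so differentiation passes under the integral and acts only on the kernel, giving $R_j=\partial_x D_j$ and the interior $x$-derivatives of the $N$ kernels.

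Third, I would collect the boundary terms produced in case (ii). The $C_1,C_2$ boundary terms combine into the multiplier $Q_0(x,y)=C_1(x,y,x)-C_2(x,y,x)$, while the interior pieces give $Q_i=\partial_x C_i$; the $N_{10},N_{20}$ boundary terms combine into $M_{00}(x,y)=N_{10}(x,y,x)-N_{20}(x,y,x)$, with interior pieces $M_{i0}=\partial_x N_{i0}$; and, the most delicate bookkeeping, the boundary terms from the double integrals $\int_a^x\int_c^y N_{1j}$ and $\int_x^b\int_c^y N_{2j}$ collapse to single $y$-integrals $\int_c^y[N_{1j}(x,y,x,\nu)-N_{2j}(x,y,x,\nu)]\mbf{v}_2(x,\nu)d\nu$ (and analogously with $\int_y^d$), which must be identified as the $M_{0j}$ kernels of the output operator, while the interior pieces give $M_{ij}=\partial_x N_{ij}$. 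Matching every term against the definition of $\mcl{P}[H]$ then establishes the claim.

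The main obstacle is conceptual rather than computational: one must verify that no term of the form $\partial_x\mbf{v}_x$ or $\partial_x\mbf{v}_2$ is generated, since such a term could not be realized by a PI operator acting on an $L_2$ input. This is exactly where the structural hypotheses on $G$ enter, as the assumptions $C_0=0$ and $N_{00}=N_{01}=N_{02}=0$ eliminate precisely the multiplier and $y$-integral terms whose inputs depend on $x$, so that the product rule never forces us to differentiate the input. The $W_1$ (Sobolev) regularity of the parameters $B,C_i,D_j,N_{ij}$ guarantees that each $x$-derivative of a kernel exists in $L_2$, ensuring that $H\in\mcl{N}_{0112}$ is well-defined and justifying the use of the Leibniz rule and differentiation under the integral sign. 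Beyond this, the remaining work is careful but routine bookkeeping of which boundary contribution lands in which slot of $H$.
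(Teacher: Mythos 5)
Your proposal is correct and follows essentially the same route as the paper's proof: decompose $\mcl{P}[G]\mbf{v}$ by linearity into the $B$, $C$, $D$, and $N$ contributions, differentiate termwise (kernels only for the $v_0$ and $\mbf{v}_y$ terms, Leibniz rule for the $x$-integrals acting on $\mbf{v}_x$ and $\mbf{v}_2$), and match boundary and interior terms against the kernels of $H$. The only differences are cosmetic: the paper delegates the double-integral ($N$) computation to Lemma~6 of its companion reference, which you instead carry out explicitly, and your closing observation about why the zero entries $C_0=0$, $N_{00}=N_{01}=N_{02}=0$ are structurally necessary is a correct point the paper leaves implicit.
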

		
		\begin{proof}
			To prove this result, we exploit the linearity of PI operators, noting that for arbitrary $\mbf{v}:=\smallbmat{v_0\\\mbf{v}_x\\\mbf{v}_y\\\mbf{v}_2}\in\text{Z}_{2}^{\{m_0,m_1,m_2\}}[\Omega_{ac}^{bd}]$
			\begin{align*}
			\bl(\mcl{P}[G]\mbf{v}\br)(x,y)&=\left(\mcl{P}\smallbmat{()&()&()&()\\()&()&()&()\\()&()&()&()\\B&C&D&N}\smallbmat{v_0\\\mbf{v}_x\\\mbf{v}_y\\\mbf{v}_2}\right)(x,y)	\\
			&=\bl(\mcl{P}[B]v_0\br)(x,y) + \bl(\mcl{P}[C]\mbf{v}_x\br)(x,y)\\
			&\qquad+ \bl(\mcl{P}[D]\mbf{v}_y\br)(x,y) + \bl(\mcl{P}[N]\mbf{v}_2\br)(x,y)
			\end{align*}
			Since $v_0$ and $\mbf{v}_y$ do not depend on $x$, it is easy to see that
			\begin{align*}
			\partial_x\bl(\mcl{P}[B]v_0\br)(x,y)&=\partial_x\bl(B(x,y)v_0\br) \\
			&=P(x,y)v_0=\bl(\mcl{P}[P]v_0\br)(x,y),
			\end{align*}
			and similarly
			\begin{align*}
			&\partial_x(\mcl{P}[D]\mbf{v}_y)(x,y)=\partial_xD_{0}(x,y)\mbf{v}_y(y)\\
			&\ +\partial_x\int_{c}^{y} D_{1}(x,y,\nu)\mbf{v}_y(\nu)d\nu
			+\partial_x\int_{y}^{d}D_{2}(x,y,\nu)\mbf{v}_y(\nu)d\nu	\\
			&=R_{0}(x,y)\mbf{v}(y)+\int_{c}^{y} R_{1}(x,y,\nu)\mbf{v}_y(\nu)d\nu \\
			&\hspace*{2.00cm}+\int_{y}^{d}R_{2}(x,y,\nu)\mbf{v}_y(\nu)d\nu
			=(\mcl{P}[R]\mbf{v}_y)(x,y)
			\end{align*}
			For the remaining terms, we recall the Leibniz integral rule, stating that, for differentiable $K\in L_2[\Omega_{a}^{b}\times\Omega_{a}^{b}]$,
			\begin{align*}
			&\frac{d}{dx}\left(\int_{a(x)}^{b(x)} K(x,\theta)d\theta\right)
			=\int_{a(x)}^{b(x)} \partial_x K(x,\theta)d\theta \\
			&\hspace*{2.0cm}+K(x,b(x))\frac{d}{dx}b(x)
			-P(x,a(x))\frac{d}{dx}a(x).
			\end{align*}
			Using this result, it follows that
			\begin{align*}
			&\partial_x(\mcl{P}[C]\mbf{v}_x)(x,y)	\\
			&=\partial_x\left(\int_{a}^{x} C_{1}(x,y,\theta)\mbf{v}_x(\theta)d\theta
			+\int_{x}^{b}C_{2}(x,y,\theta)\mbf{v}_x(\theta)d\theta\right)	\\
			&=C_{1}(x,y,x)\mbf{v}_x(x)
			+\int_{a}^{x}\partial_{x}C_{1}(x,y,\theta)\mbf{v}_x(\theta)d\theta \\
			&\qquad-C_{2}(x,y,x)\mbf{v}_x(x)
			+\int_{x}^{b}\partial_{x}C_{2}(x,y,\theta)\mbf{v}_x(\theta)d\theta \\
			&=(\mcl{P}[Q]\mbf{v}_x)(x,y),
			\end{align*}
			and similarly, as also shown in Lem.~6 in~\cite{jagt2021PIEArxiv},
			\begin{align*}
			\partial_x(\mcl{P}[N]\mbf{v}_2)(x,y)=(\mcl{P}[M]\mbf{v}_2)(x,y).
			\end{align*}
			Combining the results, we conclude that
			\begin{align*}
			&\partial_x\bl(\mcl{P}[G]\mbf{v}\br)(x,y)	\\
			&\ =\partial_x\bl(\mcl{P}[B]v_0\br)(x,y) + \partial_x\bl(\mcl{P}[C]\mbf{v}_x\br)(x,y)\\
			&\quad+ \partial_x\bl(\mcl{P}[D]\mbf{v}_y\br)(x,y) + \partial_x\bl(\mcl{P}[N]\mbf{v}_2\br)(x,y)	\\
			&\ =\bl(\mcl{P}[P]v_0\br)(x,y) + \bl(\mcl{P}[Q]\mbf{v}_x\br)(x,y)\\
			&\quad+\bl(\mcl{P}[R]\mbf{v}_y\br)(x,y) + \bl(\mcl{P}[M]\mbf{v}_2\br)(x,y)	
			= \bl(\mcl{P}[H]\mbf{v}\br)(x,y).
			\end{align*}
			
		\end{proof}
		
		\begin{lem}\label{lem:appx_div_operator_y}
			Suppose $G:=\smallbmat{()&()&()&()\\()&()&()&()\\()&()&()&()\\B&C&D&N}\in\mcl{N}_{0112}^{\{0,0,n_2\}\times\{m_0,m_1,m_2\}}$ where
			\begin{align*}
			B & & &\in W_1^{n_2\times m_0}[\Omega_{ac}^{bd}]\nonumber\\
			C&=\{C_0,C_1,C_2\}	& &\in\{W_1,W_1,W_1\}\subset\mcl{N}^{n_2\times m_1}_{1D\rightarrow 2D},	\\
			D&=\{0,D_1,D_2\}	& &\in\{W_1,W_1,W_1\}\subset\mcl{N}^{n_2\times m_1}_{1D\rightarrow 2D},	\\
			N\! &=\! \bmat{0&\! \! N_{01}&\! \! N_{02}\\0&\! \! N_{11}&\! \! N_{12}\\0&\! \! N_{21}&\! \! N_{22}} & &\in \bmat{
				0 &\! \! \! W_1 &\! \! \! W_1\\
				0 &\! \! \! W_1 &\! \! \! W_1\\
				0 &\! \! \! W_1 &\! \! \! W_1}\! \subset\! \mcl{N}_{2D}^{n_2\times m_2},
			\end{align*}
			and let $H:=\smallbmat{()&()&()&()\\()&()&()&()\\()&()&()&()\\P&Q&R&M}\in\mcl{N}_{0112}^{\{0,0,n_2\}\times\{m_0,m_1,m_2\}}$ with
			\begin{align}\label{eq:div_operator_y}
			P& & &\in L^{n_2\times m_0}_2[\Omega_{ac}^{bd}]	\nonumber\\
			Q&=\{Q_0,Q_1,Q_2\}	& &\in\mcl{N}^{n_2\times m_1}_{1D\rightarrow 2D},	\nonumber\\
			R&=\{R_0,R_1,R_2\}	& &\in\mcl{N}^{n_2\times m_1}_{1D\rightarrow 2D},	\nonumber\\
			M&=\bmat{M_{00}&M_{01}&M_{02}\\M_{10}&M_{11}&M_{12}\\M_{20}&M_{21}&M_{22}}&	&\in\mcl{N}_{2D}^{n_2\times m_2},	&	&\hspace*{1.0cm}
			\end{align}
			where, for $i,j\in\{1,2\}$,		
			\begin{align*}
			&P(x,y)=\partial_y B(x,y),	\\
			&Q_0(x,y)=\partial_y C_0(x,y),	\\
			&Q_i(x,y,\theta)=\partial_y C_i(x,y,\theta),	\\
			&R_0(x,y)=D_1(x,y,y)-D_2(x,y,y),	\\
			&R_j(x,y,\nu)=\partial_y D_j(x,y,\nu),	\\
			&M_{00}(x,y)=N_{01}(x,y,y)-N_{02}(x,y,y),    \\
			&M_{i0}(x,y,\theta)=N_{i1}(x,y,\theta,y)-N_{i2}(x,y,\theta,y),   \\
			&M_{0j}(x,y,\nu)=\partial_y N_{0j}(x,y,\nu), \\
			&M_{ij}(x,y,\theta,\nu)=\partial_y N_{ij}(x,y,\theta,\nu). &
			\end{align*}
			Then, for any $\mbf{v}\in\text{Z}_{2}^{\{m_0,m_1,m_2\}}[\Omega_{ac}^{bd}]$,
			\begin{align*}
			\partial_y(\mcl{P}[G]\mbf{v})(x,y)
			=(\mcl{P}[H]\mbf{v})(x,y).\\
			\end{align*}
		\end{lem}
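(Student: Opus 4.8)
The plan is to carry out the direct analogue of the proof of Lemma~\ref{lem:appx_div_operator_x}, simply interchanging the roles of the two spatial coordinates: the $x$-direction integrals (variable $\theta$, limits $a,b$) will now pass through $\partial_y$ untouched, while the $y$-direction integrals (variable $\nu$, limits $c,d$) are the ones to which the Leibniz rule is applied. Concretely, I would first invoke linearity of PI operators to split, for arbitrary $\mbf{v}=\smallbmat{v_0\\\mbf{v}_x\\\mbf{v}_y\\\mbf{v}_2}\in\text{Z}_{2}^{\{m_0,m_1,m_2\}}$,
\[
\bl(\mcl{P}[G]\mbf{v}\br)(x,y)=\bl(\mcl{P}[B]v_0\br)(x,y)+\bl(\mcl{P}[C]\mbf{v}_x\br)(x,y)+\bl(\mcl{P}[D]\mbf{v}_y\br)(x,y)+\bl(\mcl{P}[N]\mbf{v}_2\br)(x,y),
\]
and then differentiate each summand in $y$ separately.

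For the first two summands the computation is immediate. Since $v_0\in\R^{m_0}$ is constant, $\partial_y\bl(B(x,y)v_0\br)=(\partial_yB)(x,y)v_0=\bl(\mcl{P}[P]v_0\br)(x,y)$. For $\mcl{P}[C]\mbf{v}_x$, note that $\mbf{v}_x$ is a function of $x$ alone and that all three constituent operations (the multiplier $C_0$ and the $\theta$-integrals $\int_a^x C_1$, $\int_x^b C_2$) integrate only in the $x$-direction, whose limits are independent of $y$; hence differentiation passes straight under the integral sign, yielding $Q_0=\partial_yC_0$ and $Q_i=\partial_yC_i$ with no boundary contributions. This is exactly the role played by $\mcl{P}[D]$ in the $x$-analogue, and it explains why $C_0$ may now be nonzero: since $\mbf{v}_x$ carries no $y$-dependence, no $\partial_y\mbf{v}_x$ term can appear.

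The remaining two summands require the Leibniz integral rule in $y$. For $\mcl{P}[D]\mbf{v}_y$, the operator integrates $\mbf{v}_y(\nu)$ over $\nu$ with $y$-dependent limits $\int_c^y D_1+\int_y^d D_2$; applying Leibniz produces the boundary term $\bl[D_1(x,y,y)-D_2(x,y,y)\br]\mbf{v}_y(y)=R_0(x,y)\mbf{v}_y(y)$ together with the interior derivatives $R_j=\partial_yD_j$. Here the hypothesis $D_0=0$ is essential, as a nonzero multiplier $D_0(x,y)\mbf{v}_y(y)$ would, upon differentiation, generate a term $D_0\,\partial_y\mbf{v}_y$ not of PI form. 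The term $\mcl{P}[N]\mbf{v}_2$ is the substantive one, and I expect it to be the main obstacle. Because the first column $N_{00},N_{10},N_{20}$ vanishes by hypothesis, only the six integrals carrying a $\nu$-integration survive --- the pure $y$-integrals $\int_c^yN_{01}+\int_y^dN_{02}$ and the four double integrals $N_{ij}$, $i,j\in\{1,2\}$ --- and this is precisely what guarantees that no $\partial_y\mbf{v}_2$ term can arise. Applying Leibniz to each $\nu$-integration while leaving every $\theta$-integration intact, the boundary evaluations collapse $\nu\mapsto y$: the pure $y$-integrals yield the multiplier $M_{00}=N_{01}(x,y,y)-N_{02}(x,y,y)$, while each double integral sheds an $x$-only integral with kernel $M_{i0}(x,y,\theta)=N_{i1}(x,y,\theta,y)-N_{i2}(x,y,\theta,y)$; the interior derivatives then supply $M_{0j}=\partial_yN_{0j}$ and $M_{ij}=\partial_yN_{ij}$. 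Summing the four pieces reassembles $\bl(\mcl{P}[H]\mbf{v}\br)(x,y)$, as claimed.

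The delicate bookkeeping --- and therefore the crux of the argument --- lies entirely in the $\mcl{P}[N]$ term: one must correctly route each of the surviving integrals, verify that every Leibniz boundary term lands in the right reduced kernel ($M_{00}$ or $M_{i0}$) with the correct sign, and confirm that the leftover double integrals reassemble into a valid $\mcl{N}_{2D}$ parameter. This is the direct analogue of Lemma~6 in~\cite{jagt2021PIEArxiv}, now carried out in the $y$-variable, and once this single computation is in hand the remaining steps are routine.
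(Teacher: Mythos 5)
Your proposal is correct and follows exactly the route the paper takes: its proof of this lemma is literally "follows along the same lines as that of Lemma~\ref{lem:appx_div_operator_x}," i.e., split $\mcl{P}[G]\mbf{v}$ by linearity into the $B$, $C$, $D$, $N$ terms, pass $\partial_y$ under the $y$-independent integrals, and apply the Leibniz rule to the $\nu$-integrations, which is precisely what you do. Your bookkeeping of the swapped roles of $C$ and $D$, the necessity of $D_0=0$ and of the vanishing first column of $N$, and the boundary terms $R_0$, $M_{00}$, $M_{i0}$ with their signs all match the stated parameters.
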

		\begin{proof}
			The proof follows along the same lines as that of Lemma~\ref{lem:appx_div_operator_x}.
		\end{proof}

		\subsection{Adjoint of 0112-PI Operators}\label{sec:appx_adjoint}
		
		\begin{figure*}
			\hrulefill
			\footnotesize
			\begin{flalign}\label{eq:appx_L_adj}
			&\quad \mcl{L}_{\text{adj}}(G)=\hat{G}=\smallbmat{\hat{B}_{00}&\hat{B}_{01}&\hat{B}_{02}&\hat{C}_{0}\\\hat{B}_{10}&\hat{B}_{11}&\hat{B}_{12}&\hat{C}_{1}\\\hat{B}_{20}&\hat{B}_{21}&\hat{B}_{22}&\hat{C}_{2}\\\hat{D}_{0}&\hat{D}_{1}&\hat{D}_{2}&\hat{N}} \in \mcl N_{0112}^{\enn{u}\times\enn{v}}
			& &\text{with} &
			\begin{array}{ll}
			\hat{B}_{jj}=\{\hat{B}_{jj,0},\hat{B}_{jj,1},\hat{B}_{jj,2}\} &\in\mcl{N}_{011}^{m_1\times n_1}\\
			\hat{C}_{j}=\{\hat{C}_{j,0},\hat{C}_{j,1},\hat{C}_{j,2}\} &\in\mcl{N}_{2D\rightarrow 1D}^{m_1\times n_2}\\
			\hat{D}_{j}=\{\hat{D}_{j,0},\hat{D}_{j,1},\hat{C}_{D,2}\} &\in\mcl{N}_{1D\rightarrow 2D}^{m_2\times n_1}\\
			\hat{N}=\smallbmat{\hat{N}_{00}&\hat{N}_{01}&\hat{N}_{02}\\\hat{N}_{10}&\hat{N}_{11}&\hat{N}_{12}\\\hat{N}_{20}&\hat{N}_{21}&\hat{N}_{22}} &\in\mcl{N}_{2D}^{m_2\times n_2}, 
			\end{array} & &
			\end{flalign}
			where $\enn{u}=\{m_0,m_1,m_2\}$ and $\enn{v}=\{n_0,n_1,n_2\}$, and where
			\begin{flalign}\label{eq:appx_adjoint_params}
			\quad&\bmat{\hat{B}_{00}&\hat{B}_{01}(x)&\hat{B}_{02}(y)&\hat{C}_{0}(x,y)\\
				\hat{B}_{10}(x)& &\hat{B}_{12}(x,y)& \\
				\hat{B}_{20}(y)&\hat{B}_{21}(x,y)& & \\
				\hat{D}_{0}(x,y)& & & }
			=\bmat{B_{00}^T&B_{10}^T(x)&B_{20}^T(y)&D_{0}^T(x,y) \\
				B_{01}^T(x)& &B_{21}^T(x,y)&  \\
				B_{02}^T(y)&B_{12}^T(x,y)& &  \\
				C_{0}^T(x,y)& & & },	\nonumber\\
			&\{\hat{B}_{11,0}(x),\hat{B}_{11,1}(x,\theta),\hat{B}_{11,2}(x,\theta)\}
			=\{B_{11,0}^T(x),B_{11,2}^T(\theta,x),B_{11,1}^T(\theta,x)\},	\nonumber\\
			&\{\hat{B}_{22,0}(y),\hat{B}_{22,1}(y,\nu),\hat{B}_{22,2}(y,\nu)\}
			=\{B_{22,0}^T(y),B_{22,2}^T(\nu,y),B_{22,1}^T(\nu,y)\},	\nonumber\\
			&\{\hat{C}_{1,0}(x,y),\hat{C}_{1,1}(x,y,\theta),\hat{C}_{1,2}(x,y,\theta)\}
			=\{D_{1,0}^T(x,y),D_{1,2}^T(\theta,y,x),D_{1,1}^T(\theta,y,x)\},	\nonumber\\
			&\{\hat{C}_{2,0}(x,y),\hat{C}_{2,1}(x,y,\nu),\hat{C}_{2,2}(x,y,\nu)\}
			=\{D_{2,0}^T(x,y),D_{2,2}^T(x,\nu,y),D_{2,1}^T(x,\nu,y)\},	\nonumber\\
			&\{\hat{D}_{1,0}(x,y),\hat{D}_{1,1}(x,y,\theta),\hat{D}_{1,2}(x,y,\theta)\}
			=\{C_{1,0}^T(x,y),C_{1,2}^T(\theta,y,x),C_{1,1}^T(\theta,y,x)\},	\nonumber\\
			&\{\hat{D}_{2,0}(x,y),\hat{D}_{2,1}(x,y,\nu),\hat{D}_{2,2}(x,y,\nu)\}
			=\{C_{2,0}^T(x,y),C_{2,2}^T(x,\nu,y),C_{2,1}^T(x,\nu,y)\},	\nonumber\\
			&\bmat{\hat{N}_{00}(x,y)&\hat{N}_{01}(x,y,\nu)&\hat{N}_{02}(x,y,\nu)\\ \hat{N}_{10}(x,y,\theta)&\hat{N}_{11}(x,y,\theta,\nu)&\hat{N}_{12}(x,y,\theta,\nu)\\ \hat{N}_{20}(x,y,\theta)&\hat{N}_{21}(x,y,\theta,\nu)&\hat{N}_{22}(x,y,\theta,\nu)}=\bmat{N_{00}^T(x,y)&N^T_{02}(x,\nu,y)&N^T_{01}(x,\nu,y)\\ N^T_{20}(\theta,y,x)&N^T_{22}(\theta,\nu,x,y)&N^T_{21}(\theta,\nu,x,y)\\ N^T_{10}(\theta,y,x)&N^T_{12}(\theta,\nu,x,y)&N^T_{11}(\theta,\nu,x,y)} &
			\end{flalign}
			
			\hrulefill
			\vspace*{-0.2cm}
			\caption{Parameters $\hat{G}$ defining the adjoint $\mcl{P}[\hat{G}]=\mcl{P}^*[G]$ of the PI operator $\mcl{P}[G]$ in Lem.~\ref{lem:appx_adjoint}}\label{fig:appx_adjoint}
			\vspace*{-0.4cm}
		\end{figure*}
		
		\begin{lem}[Adjoint of 0112-PI operator]\label{lem:appx_adjoint}
			Suppose $ G:=\smallbmat{B_{00}&B_{01}&B_{02}&C_{0}\\B_{10}&B_{11}&B_{12}&C_{1}\\B_{20}&B_{21}&B_{22}&C_{2}\\D_{0}&D_{1}&D_{2}&N} \in \mcl N_{0112}^{\enn{v}\times\enn{u}}$ for some $\enn{u}=\{m_0,m_1,m_2\}$ and $\enn{v}=\{n_0,n_1,n_2\}$, and where, for $j\in\{1,2\}$,
			\begin{align*}
			B_{jj}&=\{B_{jj,0},B_{jj,1},B_{jj,2}\}\in\mcl{N}_{1D}^{n_1\times m_1},	\\
			C_{j}&=\{C_{j,0},C_{j,1},C_{j,2}\}\in\mcl{N}_{2D\rightarrow 1D}^{n_1\times m_2},	\\
			D_{j}&=\{D_{j,0},D_{j,1},D_{j,2}\}\in\mcl{N}_{1D\rightarrow 2D}^{n_2\times m_1},	\\
			N&=\smallbmat{N_{00}&N_{01}&N_{02}\\N_{10}&N_{11}&N_{12}\\N_{20}&N_{21}&N_{22}}\in\mcl{N}_{2D}^{n_2\times m_2},	& &
			\end{align*}
			and let $\hat{G}:=\mcl{L}_{\text{adj}}(G)$, where $\mcl{L}_{\text{adj}}:\mcl{N}_{0112}^{\enn{v}\times\enn{u}}\rightarrow\mcl{N}_{0112}^{\enn{u}\times\enn{v}}$ is as defined in Eqn.~\eqref{eq:appx_L_adj} in Fig.~\ref{fig:appx_adjoint}.	Then for any $\mbf{u}\in \text{Z}_{2}^{\enn{u}}[\Omega_{ac}^{bd}]$ and $\mbf{v}\in \text{Z}_{2}^{\enn{v}}[\Omega_{ac}^{bd}]$,
			\[
			\ip{\mbf{v}}{\mcl{P}[G]\mbf{u}}_{\text{Z}_{2}^{\enn{v}}}=\ip{\mcl{P}[\hat{G}]\mbf{v}}{\mbf{u}}_{\text{Z}_{2}^{\enn{u}}}.
			\]
		\end{lem}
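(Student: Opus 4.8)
The plan is to verify the adjoint identity by direct computation, exploiting the block structure of $\mcl{P}[G]$ together with the fact that the $\text{Z}_2$ inner product decomposes as a sum over its components. First I would expand $\ip{\mbf{v}}{\mcl{P}[G]\mbf{u}}_{\text{Z}_2^{\enn{v}}}$ using linearity, writing it as a finite sum of terms, one for each of the sixteen blocks of $G$ paired with a component of $\mbf{u}$ and a component of $\mbf{v}$. Because the inner product is additive across components and $\mcl{P}[G]$ is a matrix of elementary operators (multipliers $\text{M}[\cdot]$, full integrals $\smallint[\cdot]$, 1D partial integrals, and 2D partial integrals), it suffices to establish the adjoint relation for each elementary operator separately and then observe that the adjoint of the operator matrix is the transposed matrix of adjoints -- which is exactly the block reordering appearing in~\eqref{eq:appx_adjoint_params}, where the off-diagonal blocks $C_j$ and $D_j$ are interchanged.

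The core computation in each case is an application of Fubini's theorem, justified by the square-integrability of the kernels and of $\mbf{u},\mbf{v}$. For a plain multiplier one has $\ip{\mbf{v}}{\text{M}[N]\mbf{u}}=\ip{\text{M}[N^T]\mbf{v}}{\mbf{u}}$, giving the transpose-in-place entries of $\hat{G}$; for a full integral the adjoint is obtained by transposing the kernel and swapping its spatial arguments. The delicate point is the Volterra (triangular) structure of the 1D partial integral operator: writing out $\ip{\mbf{v}}{\mcl{P}[N]\mbf{u}}$ and interchanging the order of integration sends the region $\{a\le\theta\le x\le b\}$ to $\{a\le x\le\theta\le b\}$, so the kernel integrated over $[a,x]$ in the forward operator reappears integrated over $[x,b]$ in the adjoint. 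This is precisely why $\hat{B}_{11,1}(x,\theta)=B_{11,2}^T(\theta,x)$ and $\hat{B}_{11,2}(x,\theta)=B_{11,1}^T(\theta,x)$, and analogously for $\hat{B}_{22}$. I would present this one-dimensional interchange carefully, since every remaining triangular term is a structural copy of it.

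Next I would treat the 2D-PI block $\mcl{P}[N]$ and the mixed cross-terms. For $\mcl{P}[N]$ one applies the same interchange simultaneously in $(x,\theta)$ and in $(y,\nu)$; each of the four double-integral regions maps to one of the others under this double swap, producing both the simultaneous argument permutation $N_{ij}^T(\theta,\nu,x,y)$ and the index interchanges $\hat{N}_{12}\leftrightarrow N_{21}$, $\hat{N}_{10}\leftrightarrow N_{01}$, and so on, recorded in the last line of~\eqref{eq:appx_adjoint_params}. To avoid redoing this from scratch, I would invoke the already-established adjoint formula for pure 2D-PI operators (Property~\ref{enum:PIprop_adj}, proved in~\cite{jagt2021PIEArxiv}) to dispatch the $\hat{N}$ block directly. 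The cross terms $C_j$ (mapping the $L_2[\Omega_{ac}^{bd}]$ component into the boundary-layer components) and $D_j$ (mapping the other way) are handled by the same Fubini step, and their adjoints simply transfer the transposed kernel across the diagonal, yielding the relations $\hat{C}_j$ built from $D_j$ and $\hat{D}_j$ built from $C_j$, with the appropriate argument swaps.

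The main obstacle is combinatorial rather than analytic: there are sixteen blocks, and within the 2D and mixed blocks up to nine kernels each, so several dozen terms must be matched one-by-one against~\eqref{eq:appx_adjoint_params}, with careful tracking of which spatial variable plays the role of the integration variable and which triangular subdomain maps to which under the interchange. I expect the bookkeeping of these argument reorderings -- not any single estimate -- to be the most error-prone step, and I would organize it by first fixing the elementary adjoint identities (multiplier, full integral, lower/upper triangular 1D integral, and their 2D analogues) as reusable building blocks, and then assembling $\hat{G}$ block-by-block.
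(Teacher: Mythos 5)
Your proposal is correct and follows essentially the same route as the paper's proof: decompose the $\text{Z}_2$ inner product by linearity into block terms, handle the multiplier, full-integral, and cross terms by transposition plus Fubini, invoke the previously established 1D/2D-PI adjoint results for the diagonal Volterra and $\hat{N}$ blocks, and reassemble into $\mcl{P}[\hat{G}]$. The only (immaterial) difference is that you re-derive the 1D triangular-region interchange by hand, whereas the paper cites it from prior work alongside the 2D result.
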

		
		\begin{proof}
			
			Let $\mbf{u}=\smallbmat{u_0\\\mbf{u}_x\\\mbf{u}_y\\\mbf{u}_2}\in\smallbmat{\R^{m_0}\\L_2^{m_1}[\Omega_{a}^{b}]\\L_2^{m_1}[\Omega_{c}^{d}]\\L_2^{m_2}[\Omega_{ac}^{bd}]}$ and $\mbf{v}=\smallbmat{v_0\\\mbf{v}_x\\\mbf{v}_y\\\mbf{v}_2}\in\smallbmat{\R^{m_0}\\L_2^{m_1}[\Omega_{a}^{b}]\\L_2^{m_1}[\Omega_{c}^{d}]\\L_2^{m_2}[\Omega_{ac}^{bd}]}$ be arbitrary, and let $G\in\mcl{N}_{0112}^{\enn{v}\times\enn{u}}$ and $\hat{G}\in\mcl{N}_{0112}^{\enn{u}\times\enn{v}}$ for $\enn{u}:=\{m_0,m_1,m_2\}$ and $\enn{v}:=\{n_0,n_1,n_2\}$ be as defined. By linearity of PI operators, we note that
			\begin{align}\label{eq:appx_ip_expanded}
			&\ip{\mbf{v}}{\mcl{P}[G]\mbf{u}}_{Z^{\enn{v}}}=\ip{\smallbmat{v_0\\\mbf{v}_x\\\mbf{v}_y\\\mbf{v}_2}}{\mcl{P}\smallbmat{B_{00}&B_{01}&B_{02}&C_{0}\\B_{10}&B_{11}&B_{12}&C_{1}\\B_{20}&B_{21}&B_{22}&C_{2}\\D_{0}&D_{1}&D_{2}&N}\smallbmat{u_0\\\mbf{u}_x\\\mbf{u}_y\\\mbf{u}_2}}_{Z^{\enn{v}}}	\nonumber\\
			&\quad=\ip{v_0}{\mcl{P}\smallbmat{B_{00}&B_{01}&B_{02}&C_{0}\\()&()&()&()\\()&()&()&()\\()&()&()&()}\smallbmat{u_0\\\mbf{u}_x\\\mbf{u}_y\\\mbf{u}_2}}	\nonumber\\
			&\quad\qquad+\ip{\mbf{v}_x}{\mcl{P}\smallbmat{()&()&()&()\\B_{10}&B_{11}&B_{12}&C_{1}\\()&()&()&()\\()&()&()&()}\smallbmat{u_0\\\mbf{u}_x\\\mbf{u}_y\\\mbf{u}_2}}_{L_2}
			\nonumber\\
			&\quad\qquad\qquad+\ip{\mbf{v}_y}{\mcl{P}\smallbmat{()&()&()&()\\()&()&()&()\\B_{20}&B_{21}&B_{22}&C_{2}\\()&()&()&()}\smallbmat{u_0\\\mbf{u}_x\\\mbf{u}_y\\\mbf{u}_2}}_{L_2}\nonumber\\
			&\quad\qquad\qquad\qquad+\ip{\mbf{v}_2}{\mcl{P}\smallbmat{()&()&()&()\\()&()&()&()\\()&()&()&()\\D_{0}&D_{1}&D_{2}&N}\smallbmat{u_0\\\mbf{u}_x\\\mbf{u}_y\\\mbf{u}_2}}_{L_2}
			\end{align}
			We consider each of these terms separately, starting with the finite-dimensional inner product, for which it is clear that
			\begin{align*}
			&\ip{v_0}{\mcl{P}\smallbmat{B_{00}&B_{01}&B_{02}&C_{0}\\()&()&()&()\\()&()&()&()\\()&()&()&()}\smallbmat{u_0\\\mbf{u}_x\\\mbf{u}_y\\\mbf{u}_2}}	\\
			&\qquad=v_0^T B_{00}u_0 + v_0^T\int_{a}^{b}B_{01}(x)\mbf{u}_x(x)dx \\
			&\qquad\qquad+ v_0^T\int_{c}^{d}B_{02}(y)\mbf{u}_y(y)dy\nonumber\\
			&\qquad\qquad\qquad+ v_0^T\int_{a}^{b}\int_{c}^{d}C_{0}(x,y)\mbf{u}_2(x,y)dydx \nonumber\\
			&\qquad=\bbl[B_{00}^T v_0\bbr]^T u_0 + \int_{a}^{b}\bbl[B_{01}^T(x)v_0\bbr]^T\mbf{u}_x(x)dx \nonumber\\
			&\qquad\qquad+ \int_{c}^{d}\bbl[B_{02}^T(y)v_0\bbr]^T\mbf{u}_y(y)dy\\
			&\qquad\qquad\qquad+ \int_{a}^{b}\int_{c}^{d}\bbl[C_{0}^T(x,y)v_0\bbr]^T\mbf{u}_2(x,y)dydx 7\nonumber\\
			&\qquad = \ip{\mcl{P}\smallbmat{\hat{B}_{00}&()&()&()\\\hat{B}_{10}&()&()&()\\\hat{B}_{20}&()&()&()\\\hat{D}_{0}&()&()&()}v_0}{\smallbmat{u_0\\\mbf{u}_x\\\mbf{u}_y\\\mbf{u}_2}}_{\text{Z}_{2}^{\enn{u}}}.
			\end{align*}
			For the remaining terms, we use the results in~\cite{peet2019PIE_representation} (Lem.~6) and~\cite{jagt2021PIEArxiv} (Lem.~8), providing the adjoints of 1D- and 2D-PI operators. From these results, it follows that
			\begin{align*}
			\ip{\mbf{v}_x}{\mcl{P}[B_{11}]\mbf{u}_x}_{L_2}
			&=\ip{\mcl{P}[\hat{B}_{11}]\mbf{v}_x}{\mbf{u}_x}_{L_2}	\\
			\ip{\mbf{v}_y}{\mcl{P}[B_{22}]\mbf{u}_y}_{L_2},
			&=\ip{\mcl{P}[\hat{B}_{22}]\mbf{v}_y}{\mbf{u}_y}_{L_2}	\\
			\ip{\mbf{v}_2}{\mcl{P}[N]\mbf{u}_2}_{L_2},
			&=\ip{\mcl{P}[\hat{N}]\mbf{v}_2}{\mbf{u}_2}_{L_2},	\\
			\end{align*}
			and it is straightforward to show that also
			\begin{align*}
			\ip{\mbf{v}_x}{\mcl{P}[C_1]\mbf{u}_2}_{L_2}
			&=\ip{\mcl{P}[\hat{D}_1]\mbf{v}_x}{\mbf{u}_2}_{L_2},	\\
			\ip{\mbf{v}_y}{\mcl{P}[C_2]\mbf{u}_2}_{L_2}
			&=\ip{\mcl{P}[\hat{D}_2]\mbf{v}_y}{\mbf{u}_2}_{L_2},	\\
			\ip{\mbf{v}_2}{\mcl{P}[D_1]\mbf{u}_x}_{L_2}
			&=\ip{\mcl{P}[\hat{C}_1]\mbf{v}_2}{\mbf{u}_x}_{L_2},	\\
			\ip{\mbf{v}_2}{\mcl{P}[D_2]\mbf{u}_y}_{L_2}
			&=\ip{\mcl{P}[\hat{C}_2]\mbf{v}_2}{\mbf{u}_y}_{L_2}.
			\end{align*}
			Applying these relations, we find that
			\begin{align*}
			&\ip{\mbf{v}_x}{\mcl{P}\smallbmat{()&()&()&()\\B_{10}&B_{11}&B_{12}&C_1\\()&()&()&()\\()&()&()&()}\smallbmat{u_0\\\mbf{u}_x\\\mbf{u}_y\\\mbf{u}_2}}_{L_2}	\\
			&\qquad=\int_{a}^{b}\mbf{v}_x^T(x)B_{10}(x)u_0dx + \ip{\mbf{v}_x}{\mcl{P}[B_{11}]\mbf{u}_x}_{L_2}	\\
			&\qquad\qquad+\int_{a}^{b}\mbf{v}_x^T(x)\int_{c}^{d}B_{12}(x,y)\mbf{u}_y(y)dydx	\\
			&\qquad\qquad\qquad +\ip{\mbf{v}_x}{\mcl{P}[C_1]\mbf{u}_2}_{L_2}	\\
			&\qquad=\left[\int_{a}^{b}B_{10}^T(x)\mbf{v}_x  dx\right]^T u_0 + \ip{\mcl{P}[\hat{B}_{11}]\mbf{v}_x}{\mbf{u}_x}_{L_2}	\\
			&\qquad\qquad+ \int_{c}^{d}\left[\int_{a}^{b}B_{12}^T(x,y)\mbf{v}_x(x)dx\right]^T\mbf{u}_y(y)dy	\\
			&\qquad\qquad\qquad+\ip{\mcl{P}[\hat{D}_1]\mbf{v}_x}{\mbf{u}_2}_{L_2} \\
			&\qquad=\ip{\mcl{P}\smallbmat{()&\hat{B}_{01}&()&()\\()&\hat{B}_{11}&()&()\\()&\hat{B}_{21}&()&()\\()&\hat{D}_{1}&()&()}\mbf{v}_x}{\smallbmat{u_0\\\mbf{u}_x\\\mbf{u}_y\\\mbf{u}_2}}_{\text{Z}_{2}^{\enn{u}}},
			\end{align*}
			and similarly
			\begin{align*}
			&\ip{\mbf{v}_y}{\mcl{P}\smallbmat{()&()&()&()\\()&()&()&()\\B_{20}&B_{21}&B_{22}&C_2\\()&()&()&()}\smallbmat{u_0\\\mbf{u}_x\\\mbf{u}_y\\\mbf{u}_2}}_{L_2}	\\
			&\hspace*{3.0cm}=\ip{\mcl{P}\smallbmat{()&()&\hat{B}_{02}&()\\()&()&\hat{B}_{12}&()\\()&()&\hat{B}_{22}&()\\()&()&\hat{D}_{2}&()}\mbf{v}_y}{\smallbmat{u_0\\\mbf{u}_x\\\mbf{u}_y\\\mbf{u}_2}}_{\text{Z}_{2}^{\enn{u}}},	\\
			&\ip{\mbf{v}_2}{\mcl{P}\smallbmat{()&()&()&()\\()&()&()&()\\()&()&()&()\\D_{0}&D_{1}&D_{2}&N}\smallbmat{u_0\\\mbf{u}_x\\\mbf{u}_y\\\mbf{u}_2}}_{L_2}	\\
			&\hspace*{3.0cm}=\ip{\mcl{P}\smallbmat{()&()&()&\hat{C}_{0}\\()&()&()&\hat{C}_{1}\\()&()&()&\hat{C}_{2}\\()&()&()&\hat{N}}\mbf{v}_2}{\smallbmat{u_0\\\mbf{u}_x\\\mbf{u}_y\\\mbf{u}_2}}_{\text{Z}_{2}^{\enn{u}}}. \\[-1.8em]
			\end{align*}
			Substituting the obtained expressions back into Eq.~\ref{eq:appx_ip_expanded}, we find \\[-1.4em]
			\begin{align*}
			&\ip{\mbf{v}}{\mcl{P}[G]\mbf{u}}_{Z^{\enn{v}}}=\ip{\smallbmat{v_0\\\mbf{v}_x\\\mbf{v}_y\\\mbf{v}_2}}{\mcl{P}\smallbmat{B_{00}&B_{01}&B_{02}&C_{0}\\B_{10}&B_{11}&B_{12}&C_{1}\\B_{20}&B_{21}&B_{22}&C_{2}\\D_{0}&D_{1}&D_{2}&N}\smallbmat{u_0\\\mbf{u}_x\\\mbf{u}_y\\\mbf{u}_2}}_{Z^{\enn{v}}}	\nonumber\\
			&=\ip{\mcl{P}\smallbmat{\hat{B}_{00}&()&()&()\\\hat{B}_{10}&()&()&()\\\hat{B}_{20}&()&()&()\\\hat{D}_{0}&()&()&()}v_0}{\smallbmat{u_0\\\mbf{u}_x\\\mbf{u}_y\\\mbf{u}_2}}_{\text{Z}_{2}^{\enn{u}}}	\\
			&\qquad+ \ip{\mcl{P}\smallbmat{()&\hat{B}_{01}&()&()\\()&\hat{B}_{11}&()&()\\()&\hat{B}_{21}&()&()\\()&\hat{D}_{1}&()&()}\mbf{v}_x}{\smallbmat{u_0\\\mbf{u}_x\\\mbf{u}_y\\\mbf{u}_2}}_{\text{Z}_{2}^{\enn{u}}} \\
			&\qquad\qquad+\ip{\mcl{P}\smallbmat{()&()&\hat{B}_{02}&()\\()&()&\hat{B}_{12}&()\\()&()&\hat{B}_{22}&()\\()&()&\hat{D}_{2}&()}\mbf{v}_y}{\smallbmat{u_0\\\mbf{u}_x\\\mbf{u}_y\\\mbf{u}_2}}_{\text{Z}_{2}^{\enn{u}}} \\
			&\qquad\qquad\qquad+\ip{\mcl{P}\smallbmat{()&()&()&\hat{C}_{0}\\()&()&()&\hat{C}_{1}\\()&()&()&\hat{C}_{2}\\()&()&()&\hat{N}}\mbf{v}_2}{\smallbmat{u_0\\\mbf{u}_x\\\mbf{u}_y\\\mbf{u}_2}}_{\text{Z}_{2}^{\enn{u}}}	\\
			&=\ip{\mcl{P}\smallbmat{\hat{B}_{00}&\hat{B}_{01}&\hat{B}_{02}&\hat{C}_{0}\\\hat{B}_{10}&\hat{B}_{11}&\hat{B}_{12}&\hat{C}_{1}\\\hat{B}_{20}&\hat{B}_{21}&\hat{B}_{22}&\hat{C}_{2}\\\hat{D}_{0}&\hat{D}_{1}&\hat{D}_{2}&\hat{N}}\smallbmat{v_0\\\mbf{v}_x\\\mbf{v}_y\\\mbf{v}_2}}{\smallbmat{u_0\\\mbf{u}_x\\\mbf{u}_y\\\mbf{u}_2}}_{Z^{\enn{v}}}
			\!\!=\ip{\mcl{P}[\hat{G}]\mbf{v}}{\mbf{u}}_{Z^{\enn{v}}}, \\[-1.8em]
			\end{align*}
			as desired.
		\end{proof}
		
		\clearpage
		
		\section{A Map from Fundamental to PDE State}\label{sec:appx_Tmap}

		For the following theorem, we recall the following definition of the domain of solutions of a PDE,
		\begin{align}\label{eq:appx_Xset1}
		X_{w} := \left\{
		\bar{\mbf{v}}=\bmat{\bar{\mbf{v}}_0\\\bar{\mbf{v}}_1\\\bar{\mbf{v}}_2}\! \in\! \bmat{L_2^{n_0}\\W_1^{n_1}\\W_2^{n_2}}
		\! \bbbbl\lvert\ 
		\bar{\mcl{E}}_0\bar{\mbf{v}}+\mcl{E}_1 w=0 
		\right\},
		\end{align}
		where
		\begin{align}\label{eq:appx_BC_Operators}
		\bar{\mcl{E}}_0&=\mcl{P}[E_0]\ \Lambda_{\text{bf}},
		&
		\mcl{E}_1&=\text{M}[E_{1}],
		\end{align}
		for matrix-valued function $E_1\in\mcl{N}_{011\leftarrow 2D}^{\enn{b}\times n_w}[\Omega_{ac}^{bd}]$ and parameters $E_0\in\mcl{N}_{011}^{\enn{b}\times\enn{f}}$,	
		with $\enn{b}:=\{n_1+4n_2,n_1+2n_2\}$ and $\enn{f}:=\{4n_1+16n_2,2n_1+4n_2\}$,
		and where the operator $\Lambda_{\text{bf}}:L_2^{n_0}\!\times\! W_1^{n_1}\!\times\! W_2^{n_2}\rightarrow\text{Z}_{1}^{\enn{f}}$ is as defined in~\eqref{eq:Lambda_bf}, extracting all the possible boundary values for the state components $\bar{\mbf{v}}_1$ and $\bar{\mbf{v}}_2$, and as limited by differentiability.
		
		\begin{figure*}[!t]
			\footnotesize
			\hrulefill
			\begin{flalign}\label{eq:appx_Tmat}
			&\begin{array}{l}
			T_{11}(x,y,\theta,\nu) = K_{33}^{11}(x,y,\theta,\nu)
			+ T_{21}(x,y,\theta,\nu) + T_{12}(x,y,\theta,\nu)
			-T_{22}(x,y,\theta,\nu), \\
			T_{21}(x,y,\theta,\nu) = - K_{32}^{1}(x,y,\nu)G_{23}^{0}(\theta,\nu) + T_{22}(x,y,\theta,\nu), \\
			T_{12}(x,y,\theta,\nu) = - K_{31}^{1}(x,y,\theta)G_{13}^{0}(\theta,\nu) + T_{22}(x,y,\theta,\nu), \\
			T_{22}(x,y,\theta,\nu) = -K_{30}(x,y)G_{03}(\theta,\nu)
			-\int_{a}^{x}K_{31}^{1}(x,y,\eta)G_{13}^{1}(\eta,\nu,\theta)d\eta
			-\int_{c}^{y}K_{32}^{1}(x,y,\mu)G_{23}^{1}(\theta,\mu,\nu)d\mu, 
			\end{array}	&
			T_{00}&=\bmat{I_{n_0}&0&0\\0&0&0\\0&0&0},
			\end{flalign}
			where
			\begin{align}\label{eq:appx_K_mat}
			&K_{33}^{11}(x,y,\theta,\nu)=
			\bmat{
				0&0&0\\
				0&I_{n_1}&0\\
				0&0&(x-\theta)(y-\nu)
			},  &
			&K_{32}^{1}(x,y,\nu)=
			\bmat{
				0&0&0\\
				I_{n_1}&0&0\\
				0&(y-\nu)&(x-a)(y-\nu)
			},\nonumber\\
			&K_{30}(x,y)=
			\bmat{
				0&0&0&0&0\\
				I_{n_1}&0&0&0&0\\
				0&I_{n_2}&(x-a)&(y-c)&(y-c)(x-a)
			},   &
			&K_{31}^{1}(x,y,\theta)=
			\bmat{
				0&0&0\\
				I_{n_1}&0&0\\
				0&(x-\theta)&(y-c)(x-\theta)
			},
			\hspace{2.0cm}
			\end{align}
			and
			\begin{align}\label{eq:appx_G_mat}
			&\enspace G_{0}(x,y)=\hat{R}_{00}F_{0}(x,y) + \hat{R}_{01}(x)F_{1}^{0}(x,y) + \int_{a}^{b}\hat{R}_{01}(\theta)F_{1}^{1}(\theta,y,x)d\theta
			+ \hat{R}_{02}(y)F_{2}^{0}(x,y) + \int_{c}^{d}\hat{R}_{02}(\nu)F_{2}^{1}(x,\nu,y),	&    \\
			&
			\begin{array}{l}
			G_{1}^{0}(x,y)=\hat{R}_{11}^{0}(x)F_{1}^{0}(x,y),
			\\
			G_{1}^{1}(x,y,\theta)=\hat{R}_{10}(x)F_{0}(\theta,y) + \hat{R}_{11}^{0}(x)F_{1}^{1}(x,y,\theta) 
			\\
			\hspace*{1.5cm}+ \hat{R}_{11}^{1}(x,\theta)F_{1}^{0}(\theta,y)  
			+\int_{a}^{b}\hat{R}_{11}^{1}(x,\eta)F_{1}^{1}(\eta,y,\theta)d\eta
			\\
			\hspace*{2.0cm}+\hat{R}_{12}(x,y)F_{2}^{0}(\theta,y) + \int_{c}^{d}\hat{R}_{12}(x,\nu)F_{2}^{1}(\theta,\nu,y)d\nu,
			\end{array}
			\hspace{0.5cm}
			\begin{array}{l}
			G_{2}^{0}(x,y)=\hat{R}_{22}^{0}(y)F_{2}^{0}(x,y), 
			\\
			G_{2}^{1}(x,y,\nu)=\hat{R}_{20}(y)F_{0}(x,\nu) + \hat{R}_{22}^{0}(y)F_{2}^{1}(x,y,\nu) 
			\\
			\hspace*{1.5cm}+ \hat{R}_{22}^{1}(y,\nu)F_{2}^{0}(x,\nu)  
			+\int_{c}^{d}\hat{R}_{22}^{1}(y,\mu)F_{2}^{1}(x,\mu,\nu)d\mu
			\\
			\hspace*{2.0cm}+\hat{R}_{21}(x,y)F_{1}^{0}(x,\nu) + \int_{a}^{b}\hat{R}_{21}(\theta,y)F_{1}^{1}(\theta,\nu,x)d\theta,
			\end{array}	\nonumber
			\hspace{1.1cm}
			\end{align}
			with
			\begin{align}\label{eq:appx_F_mat}
			&F_{0}(x,y)=E_{00}H_{03}(x,y) + E_{01}(x)H_{13}^{0}(x,y) + E_{02}(y)H_{23}^{0}(x,y),    \nonumber\\
			&F_{1}^{1}(x,y,\theta)=E_{10}(x)H_{03}(\theta,y) + E_{11}^{1}(x,\theta)H_{13}^{0}(\theta,y)
			+E_{12}(x,y)H_{23}^{0}(\theta,y),   &
			&F_{1}^{0}(x,y)=E_{11}^{0}(x)H_{13}^{0}(x,y), \nonumber\\
			&F_{2}^{1}(x,y,\nu)=E_{20}(y)H_{03}(x,\nu) + E_{22}^{1}(y,\nu)H_{23}^{0}(x,\nu)
			+E_{21}(x,y)H_{13}^{0}(x,\nu),   &
			&F_{2}^{0}(x,y)=E_{22}^{0}(y)H_{23}^{0}(x,y), \hspace{3.0cm}
			\end{align}
			and $\smallbmat{\hat{R}_{00}&\hat{R}_{01}&\hat{R}_{02}\\\hat{R}_{10}&\hat{R}_{11}&\hat{R}_{12}\\\hat{R}_{20}&\hat{R}_{21}&\hat{R}_{22}}=\mcl{L}_{\text{inv}}\left(\smallbmat{R_{00}&R_{01}&R_{02}\\R_{10}&R_{11}&R_{12}\\R_{20}&R_{21}&R_{22}}\right)\in\mcl{N}_{011}$ are the parameters associated to the inverse of the 011-PI operator $\mcl{P}\smallbmat{R_{00}&R_{01}&R_{02}\\R_{10}&R_{11}&R_{12}\\R_{20}&R_{21}&R_{22}}$ (see also Appendix II-A of~\cite{jagt2021PIEArxiv}), with $R_{11}=\{R_{11}^{0},R_{11}^{1},R_{11}^{1}\}\in\mcl{N}_{1D}$ and $R_{22}=\{R_{22}^{0},R_{22}^{1},R_{22}^{1}\}\in\mcl{N}_{1D}$, and with 
			\begin{align}\label{eq:appx_R_mat}
			&R_{00}=E_{00}H_{00} + \int_{a}^{b}E_{01}(x)H_{10}(x)dx + \int_{c}^{d}E_{02}(y)H_{20}(y)dy, \nonumber\\
			&R_{01}(x)=E_{00}H_{01}(x) + E_{01}(x)H_{11}^{0}(x),  &
			&R_{02}(y)=E_{00}H_{02}(y) + E_{02}(y)H_{22}^{0}(y),  \nonumber\\
			&R_{10}(x)=E_{10}(x)H_{00}, &
			&R_{20}(y)=E_{20}(y)H_{00}, \nonumber\\
			&R_{11}^{0}(x)=E_{11}^{0}H_{11}^{0},   &
			&R_{22}^{0}(y)=E_{22}^{0}H_{22}^{0},   \nonumber\\
			&R_{11}^{1}(x,\theta)= E_{10}(x)H_{01}(\theta) + E_{11}^{1}(x,\theta)H_{11}^{0}, &
			&R_{22}^{1}(y,\nu)= E_{20}(y)H_{02}(\nu) + E_{22}^{1}(y,\nu)H_{22}^{0}, \nonumber\\
			&R_{12}(x,y)=E_{10}(x)H_{02}(y) + E_{12}(x,y)H_{22}^{0}(y), &
			&R_{21}(x,y)=E_{20}(y)H_{01}(x) + E_{21}(x,y)H_{11}^{0}(x), 
			\hspace{2.3cm}
			\end{align}
			where,
			{\tiny
				\begin{align}\label{eq:appx_H_mat}
				&H_{00} =
				\bmat{I_{n_1}&0&0&0&0\\
					I_{n_1}&0&0&0&0\\
					I_{n_1}&0&0&0&0\\
					I_{n_1}&0&0&0&0\\
					0&I_{n_2}&0&0&0\\
					0&I_{n_2}&(b-a)&0&0\\
					0&I_{n_2}&0&(d-c)&0\\
					0&I_{n_2}&(b-a)&(d-c)&(d-c)(b-a)\\
					0&0&I_{n_2}&0&0\\
					0&0&I_{n_2}&0&0\\
					0&0&I_{n_2}&0&(d-c)\\
					0&0&I_{n_2}&0&(d-c)\\
					0&0&0&I_{n_2}&0\\
					0&0&0&I_{n_2}&(b-a)\\
					0&0&0&I_{n_2}&0\\
					0&0&0&I_{n_2}&(b-a)\\
					0&0&0&0&I_{n_2}\\
					0&0&0&0&I_{n_2}\\
					0&0&0&0&I_{n_2}\\
					0&0&0&0&I_{n_2}},    &
				&H_{01}(x) =
				\bmat{
					0&0&0\\I_{n_1}&0&0\\0&0&0\\I_{n_1}&0&0\\
					0&0&0\\ 0&(b-x)&0\\ 0&0&0\\
					0&(b-x)&(d-c)(b-x)\\
					0&0&0\\ 0&I_{n_2}&0 \\0&0&0\\ 0&I_{n_2}&(d-c)\\
					0&0&0\\ 0&0&(b-x) \\0&0&0\\ 0&0&(b-x)\\
					0&0&0\\ 0&0&I_{n_2} \\0&0&0\\ 0&0&I_{n_2}
				},   &
				&H_{02}(y) =
				\bmat{
					0&0&0\\0&0&0\\I_{n_1}&0&0\\I_{n_1}&0&0\\
					0&0&0\\ 0&0&0\\ 0&(d-y)&0\\
					0&(d-y)&(b-a)(d-y)\\
					0&0&0\\ 0&0&0 \\0&0&(d-y)\\ 0&0&(d-y)\\
					0&0&0\\ 0&0&0 \\0&I_{n_2}&0\\ 0&I_{n_2}&(b-a)\\
					0&0&0\\ 0&0&0 \\0&0&I_{n_2}\\ 0&0&I_{n_2}
				},   \nonumber\\
				&\begin{array}{l}
				H_{11}^{0}=
				\bmat{
					I_{n_1}&0&0\\I_{n_1}&0&0\\
					0&I_{n_2}&0\\0&I_{n_2}&(d-c)\\0&0&I_{n_2}\\0&0&I_{n_2}
				},\\
				\\
				H_{22}^{0}=
				\bmat{
					I_{n_1}&0&0\\I_{n_1}&0&0\\
					0&I_{n_2}&0\\0&I_{n_2}&(b-a)\\0&0&I_{n_2}\\0&0&I_{n_2}
				}
				\end{array}
				&
				&\begin{array}{l}
				H_{13}^{0}(y)=
				\bmat{
					0&0&0\\0&I_{n_1}&0\\
					0&0&0\\ 0&0&(d-y)\\0&0&0\\0&0&I_{n_2}
				},\\
				\\
				H_{23}^{0}(x)=
				\bmat{
					0&0&0\\0&I_{n_1}&0\\
					0&0&0\\ 0&0&(b-x)\\0&0&0\\0&0&I_{n_2}
				},
				\end{array}
				&
				&H_{03}(x,y)=
				\bmat{
					0&0&0\\0&0&0\\0&0&0\\0&I_{n_1}&0\\
					0&0&0\\0&0&0\\0&0&0\\ 0&0&(d-y)(b-x)\\0&0&0\\0&0&0\\0&0&0\\ 0&0&(d-y)\\0&0&0\\0&0&0\\0&0&0\\ 0&0&(b-x)\\
					0&0&0\\0&0&0\\0&0&0\\0&0&I_{n_2}
				}.
				\end{align}
			}
			\hrulefill
			\caption{Parameters $T$ describing PI operator $\mcl{T}=\mcl{P}[T]$ mapping the fundamental state back to the PDE state in Theorem~\ref{thm:appx_Tmap}}
			\label{fig:appx_Tmap_matrices}
		\end{figure*}
		
		\begin{figure*}[!t]
			\footnotesize
			\hrulefill
			\begin{align}\label{eq:appx_Tinmat}
			T_{1}(x,y)&=-K_{30}(x,y)Q_{0} - \int_{a}^{x}K_{31}^{1}(x,y,\theta)Q_{1}(\theta)d\theta - \int_{c}^{y}K_{32}^{1}(x,y,\nu)Q_{2}(\nu)d\nu,	 
			\hspace{3.5cm}
			\end{align}
			where the parameters $K_{3j}$ are as defined in~\eqref{eq:appx_K_mat}, and
			\begin{flalign}\label{eq:appx_Qin_mat}
			&\enspace Q_{0}=\hat{R}_{00}E_{0} + \int_{a}^{b}\hat{R}_{01}(\theta)E_{1}(\theta)d\theta +\int_{c}^{d}\hat{R}_{02}(\nu)E_{2}(\nu)d\nu,    \nonumber\\
			&\enspace Q_{1}(x)=\hat{R}_{10}(x)E_{0} + \hat{R}_{11}^{0}(x)E_{1}(x) + \int_{a}^{x}\hat{R}_{11}^{1}(x,\theta)E_{1}(\theta)d\theta + \int_{x}^{b}\hat{R}_{11}^{2}(x,\theta)E_{1}(\theta)d\theta + \int_{c}^{d}\hat{R}_{12}(x,\nu)E_{2}(\nu)d\nu,    \nonumber\\
			&\enspace Q_{2}(y)=\hat{R}_{20}(y)E_{0} + \int_{a}^{b}\hat{R}_{21}(y,\theta)E_{1}(\theta)d\theta + \hat{R}_{22}^{0}(y)E_{2}(y) + \int_{c}^{y}\hat{R}_{22}^{1}(y,\nu)E_{2}(\nu)d\nu + \int_{y}^{d}\hat{R}_{22}^{2}(y,\nu)E_{2}(\nu)d\nu,	&
			\end{flalign}
			where $\smallbmat{\hat{R}_{00}&\hat{R}_{01}&\hat{R}_{02}\\\hat{R}_{10}&\hat{R}_{11}&\hat{R}_{12}\\\hat{R}_{20}&\hat{R}_{21}&\hat{R}_{22}}=\mcl{L}_{\text{inv}}\left(\smallbmat{R_{00}&R_{01}&R_{02}\\R_{10}&R_{11}&R_{12}\\R_{20}&R_{21}&R_{22}}\right)\in\mcl{N}_{011}$ are the parameters associated to the inverse of the 011-PI operator $\mcl{P}\smallbmat{R_{00}&R_{01}&R_{02}\\R_{10}&R_{11}&R_{12}\\R_{20}&R_{21}&R_{22}}$ defined in Eqn.~\eqref{eq:appx_R_mat}.
			
			\hrulefill
			\caption{Parameters $T_1$ describing the PI operator $\mcl{T}_1=\text{M}[T_1]\in\Pi_{0112}^{\{0,0,n_v\}\times\{n_w,0,0\}}$ mapping the fundamental state back to the PDE state in Theorem~\ref{thm:appx_Tmap}}
			\label{fig:appx_Tinmap_matrices}
		\end{figure*}

		\begin{thm}\label{thm:appx_Tmap}
			For $\enn{b}:=\{n_1+4n_2,n_1+2n_2\}$ and $\enn{f}:=\{4n_1+16n_2,2n_1+4n_2\}$, let
			$E_0=\smallbmat{E_{00}&E_{01}&E_{02}\\E_{10}&E_{11}&E_{12}\\E_{20}&E_{21}&E_{22}}\in\mcl{N}_{011}^{\enn{b}\times\enn{f}}$ and $E_1=\smallbmat{E_{1,0}\\E_{1,1}\\E_{1,2}}\in\mcl{N}_{011\leftarrow 2D}$ 
			with
			\begin{align*}
			E_{11}&:=\{E_{11}^{0},E_{11}^{1},E_{11}^{1}\}\in\mcl{N}_{1D}^{n_1+2n_2\times 2n_1+4n_2},	\\
			E_{22}&:=\{E_{22}^{0},E_{22}^{1},E_{22}^{1}\}\in\mcl{N}_{1D}^{n_1+2n_2\times 2n_1+4n_2},
			\end{align*}
			be given, and $\mcl{E}_0\mcl{H}_1$ be invertible, where $\mcl{E}_0:=\mcl{P}[E_0]\in\Pi_{011}^{\enn{b}\times\enn{f}}$ and $\mcl{H}_1\in\Pi_{011}^{\enn{f}\times\enn{b}}$ is as in Cor.~\ref{cor:vhat_to_BC}. Let $w$ be a given input signal, with associated set $X_{w}$ as defined in Eqn.~\eqref{eq:appx_Xset1}.
			Let $T_1\in\text{Z}_{2}^{\{0,0,n_v\}\times\{n_w,0,0\}}[\Omega_{ac}^{bd}]$ be as defined in~\eqref{eq:appx_Tinmat} in Figure~\ref{fig:appx_Tinmap_matrices}, and let
			\begin{align*}
			T_0&=\smallbmat{T_{00}&0&0\\0&T_{11}&T_{12}\\0&T_{21}&T_{22}}\in\mcl{N}_{2D}^{n_v\times n_v}
			\end{align*}
			where $n_v=n_0+n_1+n_2$, and where the parameters $T_{ij}$ are as defined in Eqn.~\eqref{eq:appx_Tmat} in Fig~\ref{fig:appx_Tmap_matrices}.
			Then, if $\mcl{T}_0=\mcl{P}[T_0]\in\Pi_{2D}^{n_v\times n_v}$ and $\mcl{T}_1=\text{M}[T_1]\in\Pi_{2D\leftarrow 011}^{n_v\times \{n_w,0,0\}}$, then for any $\bar{\mbf{v}}\in X_{w}$ and $\mbf{v} \in L^{n_v}_2$,
			\begin{align*}
			\bar{\mbf{v}} &= \mcl{T}_0\mscr{D}\bar{\mbf{v}} + \mcl{T}_1 w
			&	&\text{and}	&
			\mbf{v} &= \mscr{D}\bbl[\mcl{T}_0\mbf{v}+\mcl{T}_1 w\bbr],
			\end{align*}
			where $\mscr{D}=\smallbmat{I_{n_0}&&\\&\partial_x\partial_y&\\&&\partial_x^2\partial_y^2}$.\\
			
		\end{thm}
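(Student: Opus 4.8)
The plan is to prove the two identities in turn: first establish the abstract operator identity $\bar{\mbf{v}} = \mcl{T}_0\mscr{D}\bar{\mbf{v}} + \mcl{T}_1 w$, then identify the parameters of $\mcl{T}_0,\mcl{T}_1$ with those of Figs.~\ref{fig:appx_Tmap_matrices}--\ref{fig:appx_Tinmap_matrices}, and finally verify $\mbf{v} = \mscr{D}[\mcl{T}_0\mbf{v}+\mcl{T}_1 w]$ by direct differentiation. For the first identity I would follow the outline given after Thm.~\ref{thm:Tmap}. Fix $\bar{\mbf{v}}\in X_w$ and set $\mbf{v}=\mscr{D}\bar{\mbf{v}}$. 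Lemma~\ref{lem:vhat_to_v} yields $\bar{\mbf{v}} = \mcl{K}_1\Lambda_{\text{bc}}\bar{\mbf{v}} + \mcl{K}_2\mbf{v}$ and Corollary~\ref{cor:vhat_to_BC} yields $\Lambda_{\text{bf}}\bar{\mbf{v}} = \mcl{H}_1\Lambda_{\text{bc}}\bar{\mbf{v}} + \mcl{H}_2\mbf{v}$. Substituting the second into the boundary condition $0 = \mcl{E}_0\Lambda_{\text{bf}}\bar{\mbf{v}} + \mcl{E}_1 w$ (recall $\bar{\mcl{E}}_0=\mcl{E}_0\Lambda_{\text{bf}}$ with $\mcl{E}_0=\mcl{P}[E_0]$) gives $\mcl{E}_0\mcl{H}_1\Lambda_{\text{bc}}\bar{\mbf{v}} = -\mcl{E}_0\mcl{H}_2\mbf{v} - \mcl{E}_1 w$, and inverting $\mcl{R}:=\mcl{E}_0\mcl{H}_1$ produces $\Lambda_{\text{bc}}\bar{\mbf{v}} = -\mcl{R}^{-1}(\mcl{E}_0\mcl{H}_2\mbf{v}+\mcl{E}_1 w)$. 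Reinserting into the expression for $\bar{\mbf{v}}$ gives the first identity with $\mcl{T}_0 = \mcl{K}_2 - \mcl{K}_1\mcl{R}^{-1}\mcl{E}_0\mcl{H}_2$ and $\mcl{T}_1 = -\mcl{K}_1\mcl{R}^{-1}\mcl{E}_1$.

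Next I would show these abstract operators are PI operators carrying exactly the stated parameters. Since $\Pi_{011}$ is closed under products and under inversion (Properties~\ref{enum:PIprop_prod} and~\ref{enum:PIprop_inv}), the composition $\mcl{R}=\mcl{E}_0\mcl{H}_1$ has parameters $R=\mcl{L}_\times(E_0,H_1)$, which I would match term-by-term against the $R_{ij}$ of Eqn.~\eqref{eq:appx_R_mat}; its inverse has parameters $\hat{R}=\mcl{L}_{\text{inv}}(R)$. The product $\mcl{E}_0\mcl{H}_2$ gives the functions $F$ of Eqn.~\eqref{eq:appx_F_mat}, and composing with $\hat{R}$ yields the functions $G$ of Eqn.~\eqref{eq:appx_G_mat} for $\mcl{R}^{-1}\mcl{E}_0\mcl{H}_2$ and the functions $Q$ of Eqn.~\eqref{eq:appx_Qin_mat} for $\mcl{R}^{-1}\mcl{E}_1$. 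Finally, left-composing with $\mcl{K}_1$ (whose kernels $K_{3j}$ are single integrals) and subtracting from $\mcl{K}_2$ reproduces the blocks $T_{ij}$ of Eqn.~\eqref{eq:appx_Tmat}, while $-\mcl{K}_1\mcl{R}^{-1}\mcl{E}_1$ reproduces the multiplier $T_1$ of Eqn.~\eqref{eq:appx_Tinmat}; the reduced structure $T_0=\smallbmat{T_{00}&0&0\\0&T_{11}&T_{12}\\0&T_{21}&T_{22}}$ follows from the zero pattern of $K_2$, $K_{30}$, $K_{31}$ and $K_{32}$.

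For the second identity I would apply $\mscr{D}=\mathrm{diag}(I_{n_0},\partial_x\partial_y,\partial_x^2\partial_y^2)$ directly to $\mcl{T}_0\mbf{v}+\mcl{T}_1 w$ using the differentiation Lemmas~\ref{lem:appx_div_operator_x} and~\ref{lem:appx_div_operator_y} (and the 2D versions underlying Property~\ref{enum:PIprop_dif}). The decisive facts are $\mscr{D}\mcl{K}_2 = I$ and $\mscr{D}\mcl{K}_1 = 0$. The first holds because the kernel $K_{33}$ of $\mcl{K}_2$ contributes the factor $I_{n_1}$ in its $\mbf{v}_1$-block and $(x-\theta)(y-\nu)I_{n_2}$ in its $\mbf{v}_2$-block, so that $\partial_x\partial_y\int_a^x\!\int_c^y \mbf{v}_1 = \mbf{v}_1$ and $\partial_x^2\partial_y^2\int_a^x\!\int_c^y(x-\theta)(y-\nu)\mbf{v}_2 = \mbf{v}_2$ by the fundamental theorem, while the multiplier block $T_{00}$ returns $\mbf{v}_0$ on the $n_0$ components. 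The second holds because every entry of the kernels $K_{30},K_{31},K_{32}$ of $\mcl{K}_1$ is, within each state component, a polynomial of degree at most one in the variable(s) that $\mscr{D}$ differentiates, hence annihilated. Consequently $\mscr{D}[\mcl{T}_0\mbf{v}+\mcl{T}_1 w] = \mscr{D}\mcl{K}_2\mbf{v} - \mscr{D}\mcl{K}_1\mcl{R}^{-1}(\mcl{E}_0\mcl{H}_2\mbf{v}+\mcl{E}_1 w) = \mbf{v}$.

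I expect the \emph{main obstacle} to be the parameter-matching step: carrying the products $\mcl{E}_0\mcl{H}_1$, $\mcl{R}^{-1}$, $\mcl{E}_0\mcl{H}_2$, $\mcl{R}^{-1}\mcl{E}_1$ and the final composition with $\mcl{K}_1$ through the $\mcl{L}_\times$ and $\mcl{L}_{\text{inv}}$ formulas of the PI algebra and verifying, block by block, that they coincide with the lengthy expressions in the figures. A secondary but delicate point in the differentiation argument is tracking the Leibniz boundary contributions produced when $\partial_x$ or $\partial_y$ passes through the variable integration limits; these are precisely the terms $Q_0,R_0,M_{00},\dots$ appearing in Lemmas~\ref{lem:appx_div_operator_x} and~\ref{lem:appx_div_operator_y}, and one must check that for the kernels $K_{3j}$ and $K_{33}$ they either cancel or vanish, so that no spurious term survives and $\mscr{D}\mcl{K}_1=0$, $\mscr{D}\mcl{K}_2=I$ hold exactly.
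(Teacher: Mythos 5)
Your proposal is correct, and for the first identity it coincides with the paper's own proof: the same chain (Lemma~\ref{lem:vhat_to_v}, Corollary~\ref{cor:vhat_to_BC}, substitution into $0=\mcl{E}_0\Lambda_{\text{bf}}\bar{\mbf{v}}+\mcl{E}_1 w$, inversion of $\mcl{R}=\mcl{E}_0\mcl{H}_1$) yielding $\mcl{T}_0=\mcl{K}_2-\mcl{K}_1\mcl{R}^{-1}\mcl{E}_0\mcl{H}_2$ and $\mcl{T}_1=-\mcl{K}_1\mcl{R}^{-1}\mcl{E}_1$, followed by the same parameter-matching through $\mcl{L}_{\times}$ and $\mcl{L}_{\text{inv}}$ that the paper likewise treats as bookkeeping against Eqns.~\eqref{eq:appx_R_mat}--\eqref{eq:appx_Qin_mat}. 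Where you genuinely differ is the second identity. The paper splits it into $\mscr{D}\mcl{T}_0=\text{M}[I_{n_v}]$, which it does \emph{not} re-prove but imports from the proof of Thm.~32 in~\cite{jagt2021PIEArxiv}, and $\mscr{D}\mcl{T}_1=\text{M}[0_{n_v}]$, which it verifies by a direct Leibniz computation on the composed kernel $T_1=-K_{30}Q_0-\int_a^x K_{31}^{1}Q_1-\int_c^y K_{32}^{1}Q_2$, projected onto state components via the matrices $J_0$, $J_1S$, $J_2S$. You instead establish the two operator facts $\mscr{D}\mcl{K}_2=I$ and $\mscr{D}\mcl{K}_1=0$ and deduce both $\mscr{D}\mcl{T}_0=I$ and $\mscr{D}\mcl{T}_1=0$ at once from the factorization; this buys a self-contained argument (no external citation needed) and avoids differentiating the lengthy composed parameters $G$ and $Q$, since whatever $\mcl{K}_1$ is right-composed with becomes irrelevant once $\mscr{D}\mcl{K}_1=0$. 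The paper's route, conversely, only needs to differentiate one concrete matrix-valued function rather than an operator acting on arbitrary boundary data. One caution on your wording: annihilation of $\mcl{K}_1$ does not follow merely from its kernels being ``polynomials of degree at most one in the differentiated variables'' --- for variable-limit integrals, Leibniz boundary terms such as $\partial_x\int_a^x I_{n_1}\Lambda_1(\theta)d\theta=\Lambda_1(x)$ are nonzero, and they die only because they depend on a single spatial variable (here killed by the subsequent $\partial_y$) or because the kernel vanishes on the diagonal (as $(x-\theta)$ does, so the first Leibniz term drops). You explicitly flag this Leibniz check as the delicate point, and the check does go through for $K_{30}$, $K_{31}^{1}$, $K_{32}^{1}$ and $K_{33}$, so the plan is sound; just be aware that the degree argument alone is not the proof.
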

		
		\begin{proof}
			We first proof the first identity, $\bar{\mbf{v}}=\mcl{T}_0\mscr{D}\bar{\mbf{v}}+\mcl{T}_1 w$. To this end, suppose $\bar{\mbf{v}}\in X_{w}$, and define $\mbf{v}=\mscr{D}\bar{\mbf{v}}\in L_2^{n_v}[\Omega_{x,y}]$. Furthermore,
			let $K_{ij}$ and $H_{ij}$ (for appropriate $i,j\in\{0,1,2,3\}$) be as defined in Equations~\eqref{eq:appx_K_mat} and~\eqref{eq:appx_H_mat}, and let
			\begin{align*}
			H_1&=\bmat{
				H_{00}~H_{01}~H_{02}\\
				0\ ~~H_{11}\ ~~0\\
				~~0\ ~~\ 0 ~~~~H_{22}}\in\mcl{N}_{011} &
			&H_2=\bmat{H_{03}\\H_{13}\\H_{23}}\in\mcl{N}_{2D\rightarrow 011}  \\
			K_1&=\bmat{K_{30}\\K_{31}\\K_{32}}\in\mcl{N}_{011\rightarrow 2D}    &
			&\hspace{-0.5cm}K_2=\bmat{T_{00}~~0~~~0\\~0\ ~K_{22}^{11} ~0\\~0\ ~~~0 ~ ~~0}\in\mcl{N}_{2D},
			\end{align*}
			where
			\begin{align*}
			&H_{11}\! =\! \{H_{11}^{0},0,0\}\! \in\mcl{N}_{1D}	&	 &\! \! H_{22}\! =\! \{H_{22}^{0},0,0\}\! \! \in\mcl{N}_{1D}	\\
			&H_{13}\! =\! \{H_{13}^{0},0,0\}\! \in\mcl{N}_{2D\rightarrow 1D} 	&
			&\! \! H_{23}\! =\! \{H_{23}^{0},0,0\}\! \in\mcl{N}_{2D\rightarrow 1D}  \\
			&K_{31}\! =\! \{0,K_{31}^{1},0\}\! \in\mcl{N}_{1D\rightarrow 2D} 	&
			&\! \! K_{32}\! =\! \{0,K_{32}^{1},0\}\! \in\mcl{N}_{1D\rightarrow 2D} .
			\end{align*}
			Then, by Lemma~\ref{lem:vhat_to_v} and Corollary~\ref{cor:vhat_to_BC},
			\begin{align}\label{eq:vhat_to_v_appendix}
			\Lambda_{\text{bf}} \bar{\mbf{v}} &= \mcl{H}_1 \Lambda_{\text{bc}} \bar{\mbf{v}}+\mcl{H}_2 \mbf{v} \nonumber\\
			\bar{\mbf{v}} &= \mcl{K}_1 \Lambda_{\text{bc}} \bar{\mbf{v}}+\mcl{K}_2 \mbf{v},
			\end{align}
			where $\mcl{H}_1=\mcl{P}[H_1]$, $\mcl{H}_2=\mcl{P}[H_2]$, $\mcl{K}_1=\mcl{P}[K_1]$, and $\mcl{K}_2=\mcl{P}[K_2]$. Substituting this first result into the expression for the boundary conditions, $0=\mcl{E}_0\Lambda_{\text{bf}}\bar{\mbf{v}}+\mcl{E}_1 w$, we may use the composition rules of PI operators to express
			\begin{align*}
			0&=\mcl{E}_0\Lambda_{\text{bf}}\bar{\mbf{v}} + \mcl{E}_1 w	\\
			&=\mcl{E}_0\mcl{H}_1 \Lambda_{\text{bc}} \bar{\mbf{v}}+\mcl{E}_0\mcl{H}_2 \mbf{v} + \mcl{E}_1 w	\\
			&=\mcl{R} \Lambda_{\text{bc}} \bar{\mbf{v}}+\mcl{F} \mbf{v} + \mcl{E}_1 w
			\end{align*}
			where $\mcl{R}=\mcl{P}[R]$ and $\mcl{F}=\mcl{P}[F]$ with
			\begin{align*}
			R&=\bmat{R_{00}&R_{01}&R_{02}\\R_{10}&R_{11}&R_{12}\\R_{20}&R_{21}&E_{22}}\in\mcl{N}_{011},    &
			F&=\bmat{F_{0}\\F_{1}\\F_{2}}\in\mcl{N}_{2D\rightarrow 011},
			\end{align*}
			and
			\begin{align*}
			R_{11}&=\{R_{11}^{0},R_{11}^{1},R_{11}^{1}\}\in\mcl{N}_{1D}
			\\
			R_{22}&=\{R_{22}^{0},R_{22}^{1},R_{22}^{1}\}\in\mcl{N}_{1D} \\
			F_{1}&=\{F_{1}^{0},F_{1}^{1},F_{1}^{1}\}\in\mcl{N}_{2D\rightarrow 1D} \\
			F_{2}&=\{F_{2}^{0},F_{2}^{1},F_{2}^{1}\}\in\mcl{N}_{2D\rightarrow 1D}
			\end{align*}
			defined as in Equations~\eqref{eq:appx_F_mat} and~\eqref{eq:appx_R_mat}. Since, by the theorem statement, the operator $\mcl{R}:=\mcl{E}_0\mcl{H}_1$ is invertible, we may write
			\begin{align*}
			\Lambda_{\text{bc}}\bar{\mbf{v}}&=-\mcl{R}^{-1}\bbl[\mcl{F}\mbf{v}+ \mcl{E}_1w\bbr]	\\
			&=-\mcl{G}\mbf{v} - \mcl{Q}w,
			\end{align*}
			where $\mcl{G}=\mcl{P}[G]$ and $\mcl{Q}=\text{M}[Q]$ with
			\begin{align*}
			G&\!=\!\bmat{G_{0}\\\{G_{1}^{0},G_{1}^{1},G_{1}^{1}\}\\\{G_{2}^{0},G_{2}^{1},G_{2}^{1}\}}\!\in\!\mcl{N}_{2D\rightarrow 011},	&
			Q_j&\!=\!\bmat{Q_{0}\\Q_{1}\\Q_{2}}\!\in\!\bmat{\R\\L_2[x]\\L_2[y]}
			\end{align*}
			defined as in Eqns.~\eqref{eq:appx_G_mat} and~\eqref{eq:appx_Qin_mat}. Finally, substituting this expression into Equation~\eqref{eq:vhat_to_v_appendix}, and once more using the composition rules of PI operators, we obtain
			\begin{align*}
			\bar{\mbf{v}} &= \mcl{K}_1 \Lambda_{\text{bc}} \bar{\mbf{v}}+\mcl{K}_2 \mbf{v}	\\
			&\quad=-\mcl{K}_1 \bbl[\mcl{G}\mbf{v} + \mcl{Q}w\bbr] +\mcl{K}_2 \mbf{v}\\
			&\qquad=(\mcl{K}_2-\mcl{K}_1\mcl{G})\mbf{v} - \mcl{K}_1\mcl{Q}w	\\
			&\quad\qquad=\mcl{T}_0\mbf{v} + \mcl{T}_0 w
			=\mcl{T}_0\mscr{D}\bar{\mbf{v}} + \mcl{T}_1 w,
			\end{align*}
			as desired.
			
			We now prove the second identity, $\mbf{v}=\mscr{D}\bbl[\mcl{T}_0\mbf{v}+\mcl{T}_1 w\bbr]$. To this end, let $\mbf{v}\in L_2^{n_0+n_1+n_2}[x,y]$ be arbitrary, and define $\bar{\mbf{v}}=\smallbmat{\bar{\mbf{v}}_0\\\bar{\mbf{v}}_1\\\bar{\mbf{v}}_2}=\mcl{T}_0\mbf{v}+\mcl{T}_1 w$. We note that, in the proof of Thm.~32 in Appendix~II-C of~\cite{jagt2021PIEArxiv}, we already showed that the composition of the differential operator $\mscr{D}$ with the PI operator $\mcl{T}$ is an identiy operation:
			\begin{align*}
			\mscr{D}\mcl{T}_0=\text{M}[I_{n_v}].
			\end{align*}
			It suffices therefore to show that the composition of differential operator $\mscr{D}$ with PI operator $\mcl{T}$ is a null operator:
			\begin{align*}
			\mscr{D}\mcl{T}_1=\text{M}[0_{n_v}].
			\end{align*}
			To prove this, we observe that $\mcl{T}_1$ is in fact merely a multiplier operator, $\mcl{T}_1=\mcl{P}[T_1]=\text{M}[T_1]$, mapping a finite dimensional signal $w$ to an infinite-dimensional state $\bar{\mbf{v}}$. We can decompose this state $\bar{\mbf{v}}$ into its different components $\bar{\mbf{v}}_0$, $\bar{\mbf{v}}_1$ and $\bar{\mbf{v}}_2$, using the matrices
			\begin{align*}
			J_0 &= \bmat{I_{n_0}&0&0},	&	J_1&=\bmat{I_{n_1}&0},	&	J_2&=\bmat{0&I_{n_2}},
			\end{align*}
			and
			\begin{align*}
			S=\bmat{0&I_{n_1}&0\\0&0&I_{n_2}}\in\R^{n_1+n_2\times n_0+n_1+n_2},
			\end{align*}
			so that
			\begin{align*}
			\bar{\mbf{v}}_0&=J_0\mbf{v},	&
			\bar{\mbf{v}}_1&=J_1S\mbf{v},	&	\bar{\mbf{v}}_2&=J_2S\mbf{v}.
			\end{align*}
			By definition of the operator $\mscr{D}$, we then have to prove that
			\begin{align}
			0&\!=\!J_0\mcl{T}_1\mbf{v}=\text{M}[J_0 T_1]\mbf{v}, \label{eq:appx_uhat0=Du0}\\
			0&\!=\!J_1 S\ \partial_x\partial_y\bbl(\mcl{T}_1\mbf{v}\bbr)=\text{M}\bbl[\partial_x\partial_y \bbl(J_1 S T_1\bbr)\bbr]\mbf{v},	\label{eq:appx_uhat1=Du1}\\
			0&\!=\!J_2 S\ \partial_x^2\partial_y^2\bbl(\mcl{T}_1\mbf{v}\bbr)=\text{M}\bbl[\partial_x^2\partial_y^2 \bbl(J_2 S T_1\bbr)\bbr]\mbf{v}. \label{eq:appx_uhat2=Du2}	
			\end{align} 
			Here we recall the definition of the function $T_1\in L_2^{n_v\times n_w}[\Omega_{ac}^{bd}]$ from~\eqref{eq:appx_Tmat} in Fig.~\ref{fig:appx_Tinmap_matrices}:
			\begin{align*}
			T_{1}(x,y)&=-K_{30}(x,y)Q_{0} - \int_{a}^{x}K_{31}^{1}(x,y,\theta)Q_{1}(\theta)d\theta\\
			&- \int_{c}^{y}K_{32}^{1}(x,y,\nu)Q_{2}(\nu)d\nu,
			\end{align*}
			where the functions $K_{3j}$ are as defined in~\eqref{eq:appx_K_mat} in Fig.~\ref{fig:appx_Tmap_matrices} and the functions $Q_{j}$ are as defined in~\eqref{eq:appx_Qin_mat} in Fig.~\ref{fig:appx_Tinmap_matrices}. Studying the definitions of the parameters $K$, we observe that the first $n_0$ rows of each are zero, and therefore $J_0 K_{30}=0$ and $J_0K_{3j}^{1}=0$ for each $j\in\{1,2\}$. It follows that also $J_0 T_1\equiv 0$, proving relation~\eqref{eq:appx_uhat0=Du0}.
			
			To prove Relation~\eqref{eq:appx_uhat1=Du1}, we use the Leibniz integral rule to compute the derivative $\partial_x [ST_1(x,y)]$:		
			\begin{align*}
			&\partial_x [ST_1(x,y)]=-\partial_x\bl[SK_{30}\br](x,y)G_0 	\\
			&\quad- SK_{31}^{1}(x,y,x)G_1(x) - \int_{a}^{x}\partial_x \bl[SK_{31}^{1}\br](x,y,\theta)G_1(\theta)d\theta	\\
			&\qquad - \int_{c}^{y}\partial_x\bl[SK_{32}^{1}\br](x,y,\nu)G_2(\nu)d\nu
			\end{align*}
			Differentiating with respect to $y$ is well, we find
			\begin{align}\label{eq:appx_dST1}
			&\partial_x\partial_y [ST_1(x,y)]=-\partial_x\partial_y\bl[SK_{30}\br](x,y)G_0 	\nonumber\\
			&\quad- \partial_y\bl[SK_{31}^{1}\br](x,y,x)G_1(x) 
			-\partial_x\bl[SK_{32}^{1}\br](x,y,y)G_2(y)	\nonumber\\
			&\qquad- \int_{a}^{x}\partial_x\partial_y \bl[SK_{31}^{1}\br](x,y,\theta)G_1(\theta)d\theta	\nonumber\\ &\quad\qquad-\int_{c}^{y}\partial_x\partial_y\bl[SK_{32}^{1}\br](x,y,\nu)G_2(\nu).
			\end{align}
			Taking the derivatives of the different function $K$, we find
			\begin{align}\label{eq:appx_dST1_params}
			\partial_x\partial_y [SK_{30}](x,y)\!&=\!\bmat{0&\!\!0&\!\!0&\!\!0&\!\!0\\0&\!\!0&\!\!0&\!\!0&\!\! I_{n_2}}\!\!\in\! \R^{n_1+n_2\times n_1+4n_2},	\nonumber\\
			\partial_y [SK_{31}^{1}](x,y,x)&=0\in \R^{n_1+n_2\times n_1+2n_2}	\nonumber\\
			\partial_x [SK_{32}^{1}](x,y,y)&=0\in \R^{n_1+n_2\times n_1+2n_2},	\nonumber\\
			\partial_x\partial_y \bl[SK_{31}^{1}\br](x,y,\theta)		 &=\bmat{0&0&0\\0&0&I_{n_2}}\in \R^{n_1+n_2\times n_1+2n_2},	\nonumber\\
			\partial_x\partial_y \bl[SK_{32}^{1}\br](x,y,\nu)	 &=\bmat{0&0&0\\0&0&I_{n_2}}\in \R^{n_1+n_2\times n_1+2n_2}.
			\end{align}
			The first $n_1$ rows of each of these matrices is identically equal to zero, from which it follows that $\partial_x\partial_y [J_1 ST_1(x,y)]\equiv0$, satisfying Relation~\eqref{eq:appx_uhat1=Du1}.
			
			Finally, to prove Relation~\eqref{eq:appx_uhat2=Du2}, we substitute the values in~\eqref{eq:appx_dST1_params} into the Equation~\eqref{eq:appx_dST1}, and premultiply with $J_2$ to find
			\begin{align}\label{eq:appx_JdST1}
			&\partial_x\partial_y[J_2ST_1](x,y)=-\partial_x\partial_y\bl[J_2 SK_{30}\br](x,y)G_0 	\nonumber\\
			&\qquad- \int_{a}^{x}\partial_x\partial_y \bl[J_2 SK_{31}^{1}\br](x,y,\theta)G_1(\theta)d\theta	\nonumber\\ &\quad\qquad-\int_{c}^{y}\partial_x\partial_y\bl[J_2 SK_{32}^{1}\br](x,y,\nu)G_2(\nu),
			\end{align}
			where		
			\begin{align}\label{eq:appx_JdST1_params}
			\partial_x\partial_y [J_2SK_{30}](x,y)\!&=\!\bmat{0&\!\!0&\!\!0&\!\!0&\!\! I_{n_2}}\!\!\in\! \R^{n_2\times n_1+4n_2},	\nonumber\\
			\partial_x\partial_y \bl[J_2 SK_{31}^{1}\br](x,y,\theta)		 &\!=\!\bmat{0&\!\!0&\!\!I_{n_2}}\!\in\! \R^{n_2\times n_1+2n_2},	\nonumber\\
			\partial_x\partial_y \bl[J_2 SK_{32}^{1}\br](x,y,\nu)		 &\!=\!\bmat{0&\!\!0&\!\!I_{n_2}}\!\in\! \R^{n_2\times n_1+2n_2}.
			\end{align}
			Using the Leibniz integral rule, we can differentiate Expression~\eqref{eq:appx_JdST1} with respect to $x$ and $y$ again, obtaining
			\begin{align}\label{eq:appx_dST1}
			&\partial_x^2\partial_y^2 [J_2ST_1](x,y)=-\partial_x^2\partial_y^2\bl[J_2 SK_{30}\br](x,y)G_0 	\nonumber\\
			&\qquad- \partial_x\partial_y^2\bl[J_2SK_{31}^{1}\br](x,y,x)G_1(x) \nonumber\\
			&\quad\qquad-\partial_x^2\partial_y\bl[J_2SK_{32}^{1}\br](x,y,y)G_2(y)	\nonumber\\
			&\qquad\qquad- \int_{a}^{x}\partial_x^2\partial_y^2 \bl[J_2 SK_{31}^{1}\br](x,y,\theta)G_1(\theta)d\theta	\nonumber\\ &\quad\qquad\qquad-\int_{c}^{y}\partial_x^2\partial_y^2\bl[J_2SK_{32}^{1}\br](x,y,\nu)G_2(\nu).
			\end{align}
			Each of the terms in this expression is a derivative of one of the functions in~\eqref{eq:appx_JdST1_params}. However, since each of these functions is homogeneous, the derivative of these functions will be identically equal to zero. It follows that $\partial_x^2\partial_y^2 [J_2ST_1](x,y)\equiv0$, satisfying the final Relation~\eqref{eq:appx_uhat2=Du2}, and thereby concluding the proof.
			
		\end{proof}

		\clearpage
		
		\section{A Parameterization of Positive PI Operators}\label{sec:appx_pos_PI}
		
		\begin{prop}\label{prop:appx_pos_PI}
			For any $Z\in L_2^{q\times n_2}[\Omega_{ac}^{bd}\times\Omega_{ac}^{bd}]$ and scalar function $g\in L_2[\Omega_{ac}^{bd}]$ satisfying $g(x,y)\geq 0$ for any $(x,y)\in\Omega_{ac}^{bd}$, let $\mcl{L}_{\text{PI}}:\R^{(9q+n_0)\times (9q+n_0)}\rightarrow\mcl{N}_{0112}^{\enn{u}\times \enn{u}}$ be defined as 
			\begin{align}\label{eq:appx_posmat_to_posPI_appendix}
			&\mcl{L}_{\text{PI}}\left(\smallbmat{P_{00}&\hdots&P_{09}\\\vdots&\ddots&\vdots\\P_{90}&\hdots&P_{99}}\right)    
			=B:=\bmat{P_{00}&B_{02}\\B_{20}&N}\in\mcl{N}_{0112}\smallbmat{n_0&n_0\\0&0\\n_2&n_2},
			\end{align}
			where 
			{\small
				\begin{align*}
				&B_{02}(x,y)=g(x,y)P_{01} Z_1(x,y) dy dx \\ 
				&\ +\! \int_{x}^{b}\! \sqrt{g}(\theta,y) P_{02}Z_2(\theta,y,x) d\theta +\! \int_{a}^{x}\! \sqrt{g}(\theta,y) P_{03}Z_3(\theta,y,x) d\theta \\ 
				&\ +\! \int_{y}^{d}\! \sqrt{g}(x,\nu)P_{04}Z_4(x,\nu,y)d\nu  +\! \int_{c}^{y}\! \sqrt{g}(x,\nu)P_{05}Z_5(x,\nu,y)d\nu  \\
				&\ +\int_{x}^{b}\int_{y}^{d}\sqrt{g}(\theta,\nu)P_{06}Z_6(\theta,\nu,x,y) d\nu d\theta \\
				&\qquad+\int_{a}^{x}\int_{y}^{d}\sqrt{g}(\theta,\nu)P_{07}Z_7(\theta,\nu,x,y) d\nu d\theta \\
				&\qquad\qquad+\int_{x}^{b}\int_{c}^{y}\sqrt{g}(\theta,\nu)P_{08}Z_8(\theta,\nu,x,y) d\nu d\theta \\
				&\qquad\qquad\qquad+\int_{a}^{x}\int_{c}^{y}\sqrt{g}(\theta,\nu)P_{09}Z_9(\theta,\nu,x,y) d\nu d\theta,	\\
				&B_{20}(x,y)=B_{02}^T(x,y),
				\end{align*}
			}
			and where the parameters $N:=\smallbmat{N_{00}&N_{01}&N_{02}\\N_{10}&N_{11}&N_{12}\\N_{20}&N_{21}&N_{22}}\in\mcl{N}_{2D}^{n_2\times n_2}$ are as defined in Equations~\eqref{eq:appx_pos_Nmats} in Figure~\ref{fig:appx_positive_parameters}.
			in Eqn.~() in~\cite{}, where $\enn{u}:=\{n_0,0,n_2\}$. Then, for any $P\geq 0$, if $B=\mcl{L}_{\text{PI}}(P)$, then 
			$\mcl{P}:=\mcl{P}[B]\in\Pi_{0112}^{\enn{u}\times\enn{u}}$ satisfies $\mcl{P}^*=\mcl{P}$ and $\ip{\mbf{u}}{\mcl{P}\mbf{u}}_{\text{Z}_{2}^{\enn{u}}}\geq 0$ for any $\mbf{u}\in \text{Z}_{2}^{\enn{u}}$.
		\end{prop}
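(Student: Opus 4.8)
The plan is to realize $\mcl{P}:=\mcl{P}[B]$ as a \emph{congruence} $\mcl{P}=\mcl{Z}^{*}\,\text{M}[P]\,\mcl{Z}$ of the constant matrix $P$ by a fixed PI operator $\mcl{Z}$ that depends only on the monomial data $Z$ and the weight $\sqrt{g}$, and then to read off self-adjointness and positivity directly from this factorization. Concretely, I would introduce
\[
\mcl{Z}\in\Pi_{0112}^{\{n_0,0,9q\}\times\enn{u}},\qquad \enn{u}=\{n_0,0,n_2\},
\]
mapping $\text{Z}_{2}^{\enn{u}}=\R^{n_0}\times L_2^{n_2}[\Omega_{ac}^{bd}]$ into $L_2^{9q+n_0}[\Omega_{ac}^{bd}]$: one block sends $u_0$ to a fixed (normalized) profile, and the remaining nine $q$-blocks send $\mbf{u}_2$ to its $\sqrt{g}$-weighted monomial moments over each of the nine integration regions of a 2D-PI operator, with $Z_1,\dots,Z_9$ as kernels. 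The map $\mcl{L}_{\text{PI}}$ of Eqn.~\eqref{eq:appx_posmat_to_posPI_appendix} is then, by design, exactly the parameter of the product $\mcl{Z}^{*}\,\text{M}[P]\,\mcl{Z}$.

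Granting the factorization, both assertions follow in one line each. By the $*$-algebra structure of $\Pi_{0112}$ (Properties~\ref{enum:PIprop_prod},~\ref{enum:PIprop_adj}) and Lemma~\ref{lem:appx_adjoint}, $\mcl{P}^{*}=(\mcl{Z}^{*}\text{M}[P]\mcl{Z})^{*}=\mcl{Z}^{*}\text{M}[P^{T}]\mcl{Z}=\mcl{Z}^{*}\text{M}[P]\mcl{Z}=\mcl{P}$, since $P\ge 0$ is symmetric; hence $\mcl{P}^{*}=\mcl{P}$. For positivity, writing $P=P^{1/2}P^{1/2}$ with $P^{1/2}$ symmetric, for any $\mbf{u}\in\text{Z}_{2}^{\enn{u}}$
\[
\ip{\mbf{u}}{\mcl{P}\mbf{u}}_{\text{Z}_{2}^{\enn{u}}}=\ip{\mcl{Z}\mbf{u}}{\text{M}[P]\mcl{Z}\mbf{u}}_{L_2}=\ip{P^{1/2}\mcl{Z}\mbf{u}}{P^{1/2}\mcl{Z}\mbf{u}}_{L_2}=\norm{P^{1/2}\mcl{Z}\mbf{u}}_{L_2}^{2}\ge 0 .
\]
The only facts used are that $\mcl{Z}$ is a bona fide 0112-PI operator (its kernels are polynomial, and $\sqrt{g}\in L_2[\Omega_{ac}^{bd}]$ because $g\ge 0$ lies in $L_2\subset L_1$ on the bounded domain) and that PI operators are closed under composition and adjoint, so that $\mcl{Z}^{*}\text{M}[P]\mcl{Z}\in\Pi_{0112}^{\enn{u}\times\enn{u}}$.

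The substance of the proof — and the step I expect to be the main obstacle — is verifying that the product $\mcl{Z}^{*}\text{M}[P]\mcl{Z}$ really has the parameters listed in the proposition, i.e.\ that $\mcl{L}_{\text{PI}}(P)=\smallbmat{P_{00}&B_{02}\\B_{20}&N}$ with $B_{20}=B_{02}^{T}$ and $N=\{N_{ij}\}$ as stated. I would obtain the parameters of $\mcl{Z}^{*}$ from Lemma~\ref{lem:appx_adjoint}, compose with $\text{M}[P]$ and $\mcl{Z}$ using the product rule $\mcl{L}_{\times}$, and then express $(\mcl{Z}^{*}\text{M}[P]\mcl{Z}\mbf{u})(x,y)$ as an iterated integral over the auxiliary variable $(s,t)$ on which $\mcl{Z}^{*}$ integrates. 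The nine-region form of a 2D-PI kernel then emerges by Fubini together with a case analysis: for a fixed output point $(x,y)$ and source point $(\theta,\nu)$, the range of $(s,t)$ splits according to the signs of $s-x$, $s-\theta$, $t-y$ and $t-\nu$, and collecting the cases reproduces precisely the single $(s,t)$-integral defining each $N_{ij}$ and the limits appearing in $B_{02}$. The cross block $B_{02}$ is the easier piece, coming from pairing the $u_0$-profile block of $\mcl{Z}$ against the monomial blocks; in particular the pure-multiplier term $g(x,y)P_{01}Z_1(x,y)$ confirms the choice of $\sqrt{g}$ as weight, since $\sqrt{g}\cdot\sqrt{g}=g$. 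This index- and limit-bookkeeping across all nine regions is the laborious core; once it is completed, the two displayed identities above close the argument.
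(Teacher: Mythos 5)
Your proposal is correct and takes essentially the same route as the paper's own proof: the paper likewise introduces the block-diagonal operator $\mcl{Z}$ (with $u_0\mapsto\sqrt{g}\,u_0$ as the finite-dimensional ``profile'' block and the nine $\sqrt{g}$-weighted monomial-moment blocks $Z_1,\dots,Z_9$ acting on $\mbf{u}_2$), asserts the congruence $\mcl{P}[B]=\mcl{Z}^*P\mcl{Z}$ via the PI composition rules, and concludes positivity by splitting $P=[P^{1/2}]^T P^{1/2}$ so that $\ip{\mbf{u}}{\mcl{P}[B]\mbf{u}}_{\text{Z}_{2}^{\enn{u}}}=\ip{P^{1/2}\mcl{Z}\mbf{u}}{P^{1/2}\mcl{Z}\mbf{u}}_{L_2}\geq 0$. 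The only cosmetic differences are that you derive self-adjointness from the factorization and the symmetry of $P$ (where the paper reads it off the parameter symmetries $B_{20}=B_{02}^T$ and $N_{ij}$ directly), and that the kernel bookkeeping you flag as the main labor is exactly the step the paper delegates to the cited composition rules.
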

		
		\begin{proof}
			Let $P\geq 0$ be an arbitrary matrix of appropriate size, and $B=\mcl{L}_{\text{PI}}(P)\in\mcl{N}_{0112}\smallbmat{n_0&n_0\\0&0\\n_2&n_2}$.
			It is easy to see that, by definition of the function $B_{20}$ and the functions $N_{ij}\in L_2$, the PI operator $\mcl{P}[B]$ defined by $B$ is self-adjoint. Furthermore, defining $\mcl{Z}:=\bmat{\mcl{Z}_0&\\&\mcl{Z}_2}\in\Pi_{0112}^{\{0,0,n_0+n_2\}\times\{n_0,0,n_2\}}$, where
			\begin{align}\label{eq_Zop_appendix}
			(\mcl{Z}_0u_0)(x,y)&=\sqrt{g(x,y)}u_0	\\
			(\mcl{Z}_2\mbf{u}_2)(x,y)
			&\!=\!\bmat{
				\sqrt{g(x,y)}Z_1(x,y)\mbf{u}_2(x,y)\\
				\int_{a}^{x}\sqrt{g(x,y)}Z_2(x,y,\theta)\mbf{u}_2(\theta,y)d\theta    \\
				\int_{x}^{b}\sqrt{g(x,y)}Z_3(x,y,\theta)\mbf{u}_2(\theta,y)d\theta    \\
				\int_{c}^{y}\sqrt{g(x,y)}Z_4(x,y,\nu)\mbf{u}_2(x,\nu)d\nu \\
				\int_{y}^{d}\sqrt{g(x,y)}Z_5(x,y,\nu)\mbf{u}_2(x,\nu)d\nu \\
				\int_{a}^{x}\int_{c}^{y}\sqrt{g(x,y)}Z_6(x,y,\theta,\nu)\mbf{u}_2(\theta,\nu)d\nu d\theta \\
				\int_{x}^{b}\int_{c}^{y}\sqrt{g(x,y)}Z_7(x,y,\theta,\nu)\mbf{u}_2(\theta,\nu)d\nu d\theta \\
				\int_{a}^{x}\int_{y}^{d}\sqrt{g(x,y)}Z_8(x,y,\theta,\nu)\mbf{u}_2(\theta,\nu)d\nu d\theta \\
				\int_{x}^{b}\int_{y}^{d}\sqrt{g(x,y)}Z_9(x,y,\theta,\nu)\mbf{u}_2(\theta,\nu)d\nu d\theta },	\nonumber
			\end{align}
			by the composition rules of PI operators (see also~\cite{jagt2021PIEArxiv}), it follows that $\mcl{P}[B]=\mcl{Z}^* P\mcl{Z}$. Since $P\geq 0$, we may split $P=\left[P^{\f{1}{2}}\right]^T P^{\f{1}{2}}$ for some $P^{\f{1}{2}}\in\R^{(9q+n_0)\times (9q+n_0)}$, and thus, for any $\mbf{u}\in Z^{\enn{u}}$,
			\begin{align*}
			\ip{\mbf{u}}{\mcl{P}[B]\mbf{u}}_{Z^{\enn{u}}}
			&=\ip{\mcl{Z}\mbf{u}}{P\mcl{Z}\mbf{u}}_{L_2}   \\
			&=\ip{P^{\frac{1}{2}}\mcl{Z}\mbf{u}}{P^{\frac{1}{2}}\mcl{Z}\mbf{u}}_{L_2}\geq 0,
			\end{align*}
			concluding the proof.
			
		\end{proof}

		\begin{figure*}[!t]
			
			\hrulefill
			{\footnotesize
				\begin{align}\label{eq:appx_pos_Nmats}
				&N_{00}(x,y)=g(x,y)[Z_{1}(x,y)]^T P_{11}Z_{1}(x,y)
				\nonumber\\
				& \nonumber\\
				&N_{10}(x,y,\theta)=g(x,y)[Z_{1}(x,y)]^T P_{12}Z_{2}(x,y,\theta) +g(\theta,y)[Z_{3}(\theta,y,x)]^T P_{31}Z_{1}(\theta,y)
				\nonumber\\
				&\qquad+\int_{x}^{b}g(\eta,y)[Z_{2}(\eta,y,x)]^T P_{22}Z_{2}(\eta,y,\theta)d\eta
				+\int_{\theta}^{x}g(\eta,y)[Z_{3}(\eta,y,x)]^T P_{32}Z_{2}(\eta,y,\theta)d\eta
				+\int_{a}^{\theta}g(\eta,y)[Z_{3}(\eta,y,x)]^T P_{33}Z_{3}(\eta,y,\theta)d\eta
				\nonumber\\
				&N_{20}(x,y,\theta)=[N_{10}(\theta,y,x)]^T
				\nonumber\\
				& \nonumber\\
				&N_{01}(x,y,\nu)=g(x,y)[Z_{1}(x,y)]^T P_{14}Z_{4}(x,y,\nu) +g(x,\nu)[Z_{5}(x,\nu,y)]^T P_{51}Z_{1}(x,\nu)
				\nonumber\\
				&\qquad+\int_{y}^{d}g(x,\mu)[Z_{4}(x,\mu,y)]^T P_{44}Z_{4}(x,\mu,\nu)d\mu +\int_{\nu}^{y}g(x,\mu)[Z_{5}(x,\mu,y)]^T P_{54}Z_{4}(x,\mu,\nu)d\mu
				+\int_{c}^{\nu}g(x,\mu)[Z_{5}(x,\mu,y)]^T P_{55}Z_{5}(x,\mu,\nu)d\mu
				\nonumber\\
				&N_{02}(x,y,\nu)=[N_{01}(x,\nu,y)]^T
				\nonumber\\
				& \nonumber\\
				&N_{11}(x,y,\theta,\nu)
				=g(x,y)[Z_{1}(x,y)]^T P_{16}Z_{6}(x,y,\theta,\nu) +g(\theta,\nu)[Z_{9}(\theta,x,\nu,y)]^T P_{91}Z_{1}(\theta,\nu)
				\nonumber\\
				&\qquad+g(x,\nu)[Z_{5}^{02}(x,\nu,y)]^T P_{52}Z_{2}(x,\theta,\nu)
				+g(\theta,y)[Z_{3}^{20}(\theta,y,x)]^T P_{34}Z_{4}(\theta,y,\nu)
				\nonumber\\
				&\qquad+\int_{x}^{b}g(\eta,y)[Z_{2}(\eta,y,x)]^T  P_{26}Z_{6}(\eta,y,\theta,\nu)]d\eta
				+\int_{\theta}^{x}g(\eta,y)[Z_{3}(\eta,y,x)]^T P_{36}Z_{6}(\eta,y,\theta,\nu)d\eta
				+\int_{a}^{\theta}g(\eta,y)[Z_{3}(\eta,y,x)]^T P_{37}Z_{7}(\eta,y,\theta,\nu)d\eta
				\nonumber\\
				&\qquad+\int_{x}^{b}g(\eta,\nu)[Z_{7}(\eta,\nu,x,y)]^T  P_{72}Z_{2}(\eta,\theta,\nu)]d\eta
				+\int_{\theta}^{x}g(\eta,\nu)[Z_{9}(\eta,\nu,x,y)]^T P_{92}Z_{2}(\eta,\theta,\nu)d\eta
				+\int_{a}^{\theta}g(\eta,\nu)[Z_{9}(\eta,\nu,x,y)]^T P_{93}Z_{3}(\eta,\theta,\nu)d\eta
				\nonumber\\
				&\qquad+\int_{y}^{d}g(x,\mu)[Z_{4}(x,\mu,y)]^T P_{46}Z_{6}(x,\mu,\theta,\nu)d\mu
				+\int_{\nu}^{y}g(x,\mu)[Z_{5}(x,\mu,y)]^T P_{56}Z_{6}(x,\mu,\theta,\nu)d\mu
				+\int_{c}^{\nu}g(x,\mu)[Z_{5}(x,\mu,y)]^T P_{58}Z_{8}(x,\mu,\theta,\nu)d\mu
				\nonumber\\
				&\qquad+\int_{y}^{d}g(\theta,\mu)[Z_{7}(\theta,\mu,x,y)]^T P_{74}Z_{4}(\theta,\mu,\nu)d\mu
				+\int_{\nu}^{y}g(\theta,\mu)[Z_{9}(\theta,\mu,x,y)]^T P_{94}Z_{4}(\theta,\mu,\nu)d\mu
				+\int_{c}^{\nu}g(\theta,\mu)[Z_{9}(\nu,x,\mu,y)]^T P_{95}Z_{5}(\theta,\mu,\nu)d\mu
				\nonumber\\
				&\qquad+\int_{x}^{b}\int_{y}^{d}g(\eta,\mu)[Z_{6}(\eta,\mu,x,y)]^T  P_{66}Z_{6}(\eta,\mu,\theta,\nu)]d\mu d\eta
				+\int_{\theta}^{x}\int_{y}^{d}g(\eta,\mu)[Z_{7}(\eta,\mu,x,y)]^T  P_{76}Z_{6}(\eta,\mu,\theta,\nu)]d\mu d\eta    \nonumber\\
				&\qquad\qquad+\int_{a}^{\theta}\int_{y}^{d}g(\eta,\mu)[Z_{7}(\eta,\mu,x,y)]^T  P_{77}Z_{7}(\eta,\mu,\theta,\nu)]d\mu d\eta
				+\int_{x}^{b}\int_{\nu}^{y}g(\eta,\mu)[Z_{8}(\eta,\mu,x,y)]^T  P_{86}Z_{6}(\eta,\mu,\theta,\nu)]d\mu d\eta \nonumber\\
				&\qquad\qquad\qquad+\int_{\theta}^{x}\int_{\nu}^{y}g(\eta,\mu)[Z_{9}(\eta,\mu,x,y)]^T  P_{96}Z_{6}(\eta,\mu,\theta,\nu)]d\mu d\eta
				+\int_{a}^{\theta}\int_{\nu}^{y}g(\eta,\mu)[Z_{9}(\eta,\mu,x,y)]^T  P_{97}Z_{7}(\eta,\mu,\theta,\nu)]d\mu d\eta    \nonumber\\
				&\qquad\qquad\qquad\qquad+\int_{x}^{b}\int_{c}^{\nu}g(\eta,\mu)[Z_{8}(\eta,\mu,x,y)]^T  P_{88}Z_{8}(\eta,\mu,\theta,\nu)]d\mu d\eta
				+\int_{\theta}^{x}\int_{c}^{\nu}g(\eta,\mu)[Z_{9}(\eta,\mu,x,y)]^T  P_{98}Z_{8}(\eta,\mu,\theta,\nu)]d\mu d\eta    \nonumber\\
				&\qquad\qquad\qquad\qquad\qquad+\int_{a}^{\theta}\int_{c}^{\nu}g(\eta,\mu)[Z_{9}(\eta,\mu,x,y)]^T  P_{99}Z_{9}(\eta,\mu,\theta,\nu)]d\mu d\eta
				\nonumber\\
				&N_{22}(x,y,\theta,\nu)=[N_{11}(\theta,x,\nu,y)]^T
				\nonumber\\
				& \nonumber\\
				&N_{21}(x,y,\theta,\nu)
				=g(x,y)[Z_{1}(x,y)]^T P_{17}Z_{7}(x,y,\theta,\nu) +g(\theta,\nu)[Z_{8}(\theta,x,\nu,y)]^T P_{81}Z_{1}(\theta,\nu)
				\nonumber\\
				&\qquad+g(x,\nu)[Z_{5}(x,\nu,y)]^T P_{53}Z_{3}(x,\theta,\nu)
				+g(\theta,y)[Z_{2}(\theta,y,x)]^T P_{24}Z_{4}(\theta,y,\nu)
				\nonumber\\
				&\qquad+\int_{\theta}^{b}g(\eta,y)[Z_{2}(\eta,y,x)]^T  P_{26}Z_{6}(\eta,y,\theta,\nu)]d\eta
				+\int_{x}^{\theta}g(\eta,y)[Z_{2}(\eta,y,x)]^T P_{28}Z_{8}(\eta,y,\theta,\nu)d\eta
				+\int_{a}^{x}g(\eta,y)[Z_{3}(\eta,y,x)]^T P_{37}Z_{7}(\eta,y,\theta,\nu)d\eta
				\nonumber\\
				&\qquad+\int_{\theta}^{b}g(\eta,\nu)[Z_{8}(\eta,\nu,x,y)]^T  P_{82}Z_{2}(\eta,\theta,\nu)]d\eta
				+\int_{x}^{\theta}g(\eta,\nu)[Z_{8}(\eta,\nu,x,y)]^T P_{83}Z_{3}(\eta,\theta,\nu)d\eta
				+\int_{a}^{x}g(\eta,\nu)[Z_{9}(\eta,\nu,x,y)]^T P_{93}Z_{3}(\eta,\theta,\nu)d\eta
				\nonumber\\
				&\qquad+\int_{y}^{d}g(\theta,\mu)[Z_{6}(\theta,\mu,x,y)]^T P_{64}Z_{4}(\theta,\mu,\nu)d\mu
				+\int_{\nu}^{y}g(\theta,\mu)[Z_{8}(\theta,\mu,x,y)]^T P_{84}Z_{4}(\theta,\mu,\nu)d\mu
				+\int_{c}^{\nu}g(\theta,\mu)[Z_{8}(\theta,\mu,x,y)]^T P_{85}Z_{5}(\theta,\mu,\nu)d\mu
				\nonumber\\
				&\qquad+\int_{y}^{d}g(x,\mu)[Z_{4}(x,\mu,y)]^T P_{47}Z_{7}(x,\mu,\theta,\nu)d\mu
				+\int_{\nu}^{y}g(x,\mu)[Z_{5}(x,\mu,y)]^T P_{57}Z_{7}(x,\mu,\theta,\nu)d\mu
				+\int_{c}^{\nu}g(x,\mu)[Z_{5}(x,\mu,y)]^T P_{59}Z_{9}(x,\mu,\theta,\nu)d\mu
				\nonumber\\
				&\qquad+\int_{\theta}^{b}\int_{y}^{d}g(\eta,\mu)[Z_{6}(\eta,\mu,x,y)]^T  P_{66}Z_{6}(\eta,\mu,\theta,\nu)]d\mu d\eta
				+\int_{x}^{\theta}\int_{y}^{d}g(\eta,\mu)[Z_{6}(\eta,\mu,x,y)]^T  P_{67}Z_{7}(\eta,\mu,\theta,\nu)]d\mu d\eta    \nonumber\\
				&\qquad\qquad+\int_{a}^{x}\int_{y}^{d}g(\eta,\mu)[Z_{7}(\eta,\mu,x,y)]^T  P_{77}Z_{7}(\eta,\mu,\theta,\nu)]d\mu d\eta
				+\int_{\theta}^{b}\int_{\nu}^{y}g(\eta,\mu)[Z_{8}(\eta,\mu,x,y)]^T  P_{86}Z_{6}(\eta,\mu,\theta,\nu)]d\mu d\eta    \nonumber\\
				&\qquad\qquad\qquad+\int_{x}^{\theta}\int_{\nu}^{y}g(\eta,\mu)[Z_{8}(\eta,\mu,x,y)]^T  P_{87}Z_{7}(\eta,\mu,\theta,\nu)]d\mu d\eta
				+\int_{a}^{x}\int_{\nu}^{y}g(\eta,\mu)[Z_{9}(\eta,\mu,x,y)]^T  P_{97}Z_{7}(\eta,\mu,\theta,\nu)]d\mu d\eta    \nonumber\\
				&\qquad\qquad\qquad\qquad+\int_{\theta}^{b}\int_{c}^{\nu}g(\eta,\mu)[Z_{8}(\eta,\mu,x,y)]^T  P_{88}Z_{8}(\eta,\mu,\theta,\nu)]d\mu d\eta
				+\int_{x}^{\theta}\int_{c}^{\nu}g(\eta,\mu)[Z_{9}(\eta,\mu,x,y)]^T  P_{98}Z_{8}(\eta,\mu,\theta,\nu)]d\mu d\eta    \nonumber\\
				&\qquad\qquad\qquad\qquad\qquad+\int_{a}^{x}\int_{c}^{\nu}g(\eta,\mu)[Z_{9}(\eta,\mu,x,y)]^T  P_{99}Z_{9}(\eta,\mu,\theta,\nu)]d\mu d\eta    \nonumber\\
				&N_{12}(x,y,\theta,\nu)=[N_{21}(\theta,x,\nu,y)]^T
				\end{align}
			}
			\hrulefill
			\caption{Parameters $N$ describing the positive PI operator $\mcl{P}[N]=\mcl{Z}^* P\mcl{Z}$ in Proposition~\ref{prop:appx_pos_PI}}
			\label{fig:appx_positive_parameters}
		\end{figure*}
		
	\end{appendices}

\end{document}